\documentclass{article} 



\usepackage[T1]{fontenc}
\usepackage{emptypage}
\usepackage{centernot}
\usepackage{array}
\usepackage{pst-grad}
\usepackage{pst-arrow}
\usepackage{pst-text}

\bibliographystyle{plain}


\usepackage{latexsym}
\usepackage{amsfonts}
\usepackage{amssymb}
\usepackage{amsmath}
\usepackage{amscd}
\usepackage{eucal}
\usepackage{amsthm}

\usepackage{pstricks}
\usepackage{pst-node}
\usepackage{verbatim} 
\usepackage{graphics}
\usepackage{graphicx}
\usepackage{color}   
\usepackage{framed}

\usepackage{bbm}
\usepackage{pst-plot}
\usepackage{pstricks-add}

\usepackage{xcolor}
\usepackage{mdframed}

\def\1{\ensuremath{\mathbbm{1}}}%
\def\2{\ensuremath{\mathbbm{2}}}%

\newcommand{\R}{\ensuremath{\mathbb R}}

\newcommand{\E}{\ensuremath{\bb{E}}}


\newcommand{\cC}{{\cl{C}}}
\newcommand{\cD}{{\cl{D}}}

\newcommand{\cF}{{\cl{F}}}

\newcommand{\cO}{{\cl{O}}}

\newcommand{\nowdot}{\psset{unit=1mm}\pscircle*(0,0){0.3}}



\newcommand{\eps}{\endpspicture}
\newcommand{\ps}{\pspicture}




\newcommand{\eh}{Eckmann--Hilton}

\newcommand{\bmc}{braided monoidal category}

\newcommand{\dd}{doubly-degenerate}

\newcommand{\iso}{\cong}




\newcommand{\scr}{\scriptsize}

\renewcommand{\:}{\colon}

\newcommand{\bs}{\bigskip}

\newcommand{\bicats}{\ensuremath{\cat{Bicat}_s}}

\newcommand{\ddbicatscat}{doubly-degenerate \bicats-category}


\newcommand{\noi}{\noindent}

\newcommand{\cat}[1]{\ensuremath{\textrm{\bfseries {\upshape {#1}}}}}




\newcommand{\cl}[1]{\ensuremath{\mathcal {#1}}}
\newcommand{\bb}[1]{\ensuremath{\mathbb {#1}}}
\newcommand{\ed}{\end{document}}
\newcommand{\bq}{\begin{quote}}
\newcommand{\eq}{\end{quote}}
\newcommand{\bc}{\begin{center}}
\newcommand{\ec}{\end{center}}
\newcommand{\bmp}{\noi\begin{minipage}}
\newcommand{\emp}{\end{minipage}}

\newcommand{\bfr}{\begin{flushright}}
\newcommand{\efr}{\end{flushright}}

\newcommand{\lra}{\tra}





\newcommand{\dtimes}{
\pspicture(4.2,2)
\rput(2,1){
\rput(0.3,0.3){$\times$}
\rput(0,0){$\times$}}
\endpspicture}

\newcommand{\ul}{\underline}
\newcommand{\ol}{\overline}
\newcommand{\wt}{\widetilde}

\newcommand{\hs}[1]{\hspace*{#1em}}






\newcommand{\tra}{{\psset{unit=0.1cm,nodesep=0pt} \pspicture(8,0)
\pcline{->}(1,1.1)(7,1.1) \endpspicture}}


\newcommand{\mtra}{{\psset{unit=0.1cm,nodesep=0pt} \pspicture(10,0)
\pcline{->}(1,1.1)(9,1.1) \endpspicture}}


\newcommand{\ltra}{{\psset{unit=0.1cm,nodesep=0pt} \pspicture(15,0)
\pcline{->}(1.5,1.4)(13.5,1.4) \endpspicture}}



\newcommand{\tramap}[1]{{\psset{unit=0.1cm,nodesep=0pt,labelsep=1pt} \pspicture(8,4)
\pcline{->}(1,1)(7,1)\naput[npos=0.45]{\ensuremath{\scriptstyle{#1}}} \endpspicture}}

\newcommand{\tmap}{\tramap}

\newcommand{\mtlmap}[1]{{\psset{unit=0.1cm,nodesep=0pt,labelsep=1pt} \pspicture(10,4)
\pcline{<-}(1,1)(9,1)\naput[npos=0.6]{\ensuremath{\scriptstyle{#1}}} \endpspicture}}



\newcommand{\mtmap}[1]{{\psset{unit=0.1cm,nodesep=0pt,labelsep=1pt} \pspicture(10,4)
\pcline{->}(1,1)(9,1)\naput[npos=0.45]{\ensuremath{\scriptstyle{#1}}} \endpspicture}}

\newcommand{\mmmmmtmap}[1]{{\psset{unit=0.1cm,nodesep=0pt,labelsep=1pt} \pspicture(12,4)
\pcline{->}(1,1)(11,1)\naput[npos=0.45]{\ensuremath{\scriptstyle{#1}}} \endpspicture}}


\newcommand{\ltramap}[1]{{\psset{unit=0.1cm,nodesep=0pt,labelsep=1pt} \pspicture(15,4)
\pcline{->}(1.5,1.1)(13.5,1.1)\naput{\ensuremath{\scriptstyle{#1}}} \endpspicture}}

\newcommand{\ltmap}{\ltramap}



\newcommand{\vltmap}[1]{{\psset{nodesep=0pt,labelsep=1pt} \pspicture(20,4)
\pcline{->}(1.5,1.1)(18.5,1.1)\naput{\ensuremath{\scriptstyle{#1}}} \endpspicture}}



\newcommand{\tmapsto}{{\psset{unit=0.1cm,nodesep=0pt} \pspicture(8,0) 
\pcline{|->}(1,1.2)(7,1.2) \endpspicture}}

\newcommand{\myps}{
\begin{small}
\pspicture
}

\newcommand{\emyps}{
\endpspicture
\end{small}
}

\frenchspacing

\usepackage{hyperref}

%


\exhyphenpenalty=10000

\newlength{\currentindent}
\setlength{\currentindent}{\parindent}

\newlength{\mpt}
\setlength{\mpt}{0.15em}

\numberwithin{equation}{section}

\theoremstyle{plain}

\newtheorem{theorem}{Theorem}[section]

\newtheorem{proposition}[theorem]{Proposition}
\newtheorem{prop}[theorem]{Proposition}

\theoremstyle{definition}

\newtheorem{definition}[theorem]{Definition}

\newtheorem{example}[theorem]{Example}

\newtheorem{examples}[theorem]{Examples}
\newtheorem{nonexample}[theorem]{Non-example}
\newtheorem{remark}[theorem]{Remark}
\newtheorem{remarks}[theorem]{Remarks}
\newtheorem{exercise}[theorem]{Exercise}
\newtheorem{note}[theorem]{Note}
\newtheorem{question}[theorem]{Question}
\newtheorem{questions}[theorem]{Questions}
\newtheorem{algorithm}[theorem]{Algorithm}
\newtheorem{method}[theorem]{Method}
{ \end{sf}\end{framed}\end{minipage}
\end{center}}

%
%




\renewcommand{\arraystretch}{1.1} 


\psset{
unit=1mm,
linewidth=0.7pt,
arrowlength=1.1, 
arrowsize=3pt 2,
arrowinset=0.7,
doublesep=1pt,
labelsep=2pt,
nodesep=2pt, 
dash=2pt 2pt}
  


\begin{document}


\title{Weak vertical composition}

\author{Eugenia Cheng \\  School of the Art Institute of Chicago \\E-mail: info@eugeniacheng.com \\[12pt]
Alexander S. Corner\\
Sheffield Hallam University\\
E-mail: alex.corner@shu.ac.uk
}


\maketitle


\begin{abstract}
We study semi-strict tricategories in which the only weakness is in vertical composition.  We construct these as categories enriched in the category of bicategories with strict functors, with respect to the cartesian monoidal structure.  As these are a form of tricategory it follows that doubly-degenerate ones are braided monoidal categories.  We show that this form of semi-strict tricategory is weak enough to produce all braided monoidal categories. That is, given any braided monoidal category $B$ there is a doubly-degenerate ``vertically weak'' semi-strict tricategory whose associated braided monoidal category is braided monoidal equivalent to $B$. 
\end{abstract}


\setcounter{tocdepth}{2}
\tableofcontents


\section*{Introduction}
\addcontentsline{toc}{section}{Introduction}

It is well-known that every strict 2-category is equivalent to a weak one \cite{mp1}, but that the analogous result for 3-categories does not hold \cite{gps1}. Rather, coherence for weak 3-categories (tricategories) needs more nuance.  One way of viewing this is that we need to take account of possible braidings that arise and cannot be strictified into symmetries.  The original coherence result of Gordon--Power--Street \cite{gps1} says, essentially, that every tricategory is equivalent to one in which everything is strict except interchange.  The intuition is that ``braidings arise from weak interchange''.  However, from close observation of how the Eckmann--Hilton argument works, Simpson \cite{sim3} conjectured that weak units would be enough, and this result was proved for the case $n=3$ by Joyal and Kock \cite{jk1}.  Their result involves a weak unit $I$ in an otherwise completely strict monoidal 2-category.  They showed that the category $\cat{End}(I)$ of endomorphisms on $I$ is naturally a braided monoidal category, and that every braided monoidal category is equivalent to $\cat{End}(I)$ for some monoidal 2-category.  Regarding this as a (degenerate) 3-category, this means that everything in the 3-category is strict except horizontal units.

In this work we will address a third case, in which everything is strict except vertical composition, that is composition along bounding 1-cells; this amounts to considering categories strictly enriched in the category of bicategories and strict functors, with respect to cartesian product. We write this category as \bicats. On the one hand this might be regarded as a peculiar mixture of weakness and strictness, but as pointed out in \cite{lac5, lac6} the strict functors make for a much better behaved category---unlike the category involving weak functors, it is complete and cocomplete.  The category \bicats\ is further studied in \cite{bak1}.  Although these properties of \bicats\ are not our primary motivation for using strict functors, they may result in useful consequences of our main theorem.

Note that we need fully weak vertical composition, not just weak units; Kock \cite{koc3} proved that strict associativity in both the horizontal and vertical directions yields commutativity.

It follows from the result for general tricategories that any doubly-degenerate \bicats-category ``is'' naturally a braided monoidal category; that is, its single hom-category of 2-cells and 3-cells has the structure of a braided monoidal category with the monoidal structure given by vertical composition and braiding constructed from a weak \eh\ argument.  We will show that every braided monoidal category is equivalent to one arising in this way. The proof closely follows the idea of Joyal and Kock's, using clique constructions.  Joyal and Kock use train track diagrams to give just enough ``rigidity'' to the structure of points in 3-space, and they describe this as preventing the points from being able to simply commute past each other via an Eckmann--Hilton argument. We are aiming for a different axis of strictness and so instead of points in $\R^2$ with cliques arising from train track diagrams, we use use points embedded in the interior of $I^2$ (where $I$ denotes the unit interval) with cliques arising from horizontal reparametrisations.  The idea is that the fundamental groupoid of the configuration space of points in $I^2$ is naturally a doubly-degenerate tricategory with weak horizontal and vertical composition but strict interchange.  We can make horizontal composition strict by imposing an equivalence relation, but we follow Joyal and Kock in implementing this using cliques.  

Then, starting with a braided monoidal category $B$ we show how to construct a doubly-degenerate ``vertically weak'' tricategory $\Sigma B$ whose associated braided monoidal category is braided monoidal equivalent to $B$.  Analogously to \cite{jk1} the 2-cells will be configurations of points in $I^2$ labelled by objects of $B$, such as
\[\psset{unit=2mm}
\pspicture(-5,-5)(5,5)

\psframe(-5,-5)(5,5)

\rput(3,-1){
\pscircle*(0,0){0.2}
\rput(0,1){\scr $b_1$}
}

\rput(-3,2){
\pscircle*(0,0){0.2}
\rput(0,1){\scr $b_2$}
}

\rput(1,-4){
\pscircle*(0,0){0.2}
\rput(0,1){\scr $b_3$}
}

\rput(-2,-2){
\pscircle*(0,0){0.2}
\rput(0,1){\scr $b_4$}
}


\endpspicture\]
subject to the appropriate horizontal equivalence relation. Then vertical composition is given by stacking the boxes vertically and scaling them equally; as for concatenation of paths in a space, this is only weakly associative.  Horizontal composition is strict because of the equivalence relation.

As in \cite{jk1} the morphisms (that is, 3-cells of the doubly-degenerate tricategory) are constructed via cliques.  The idea is that we want to interpret the configuration of labelled points as a tensor product in $B$, and then take morphisms between those objects in $B$, but there is no consistent way to do that. We can interpret a single point labelled by $a$ as the object $a$:
\[\renewcommand{\nowdot}{\psset{unit=1.1mm}\pscircle*(0,0){0.3}}
\psset{unit=0.8mm}
\ps(0,0)(16,16)

\rput(0,0){

\psframe(0,0)(16,16)
\pnode(8,8){x1}

\rput(8,8){\rnode{a1}{\nowdot}
\rput(0,2){\scr $a$}
}

}

\eps\]

\noi and a vertical pair as shown below is then ``obviously'' a vertical tensor product of the two singletons
\[\renewcommand{\nowdot}{\psset{unit=1.1mm}\pscircle*(0,0){0.3}}
\psset{unit=0.8mm}
\ps(0,0)(16,16)

\rput(0,0){

\psframe(0,0)(16,16)
\pnode(8,8){x1}

\rput(8,12){\rnode{a1}{\nowdot}
\rput(-2,0){\scr $a$}
}

\rput(8,4){\rnode{b1}{\nowdot}
\rput(-2,0){\scr $b$}
}

}

\eps\]
so it can be interpreted as $a \otimes b$.  However with more than two objects it is unclear what parenthesisation we should take.  Worse, this configuration
\[\renewcommand{\nowdot}{\psset{unit=1.1mm}\pscircle*(0,0){0.3}}
\psset{unit=0.8mm}
\ps(0,0)(16,16)

\rput(0,0){

\psframe(0,0)(16,16)
\pnode(8,8){x1}

\rput(4,8){\rnode{a1}{\nowdot}
\rput(0,2){\scr $a$}
}

\rput(12,8){\rnode{b1}{\nowdot}
\rput(0,2){\scr $b$}
}

}

\eps\]
appears to be a horizontal composite in the doubly-degenerate tricategory, but it is then not clear if we should interpret it as $a \otimes b$ or $b \otimes a$ in $B$.  Whichever choice we make for constructing 3-cells we will not get strict interchange -- interchange will need to invoke the braiding in $B$.   

We address all these issues by following \cite{jk1} and using cliques.  First we consider the free braided monoidal category on the objects of $B$, and embed that as configurations of points in $I^2$ entirely on the central vertical line.  Then, for a general labelled configuration of points, instead of picking one interpretation as a tensor product in $B$, we consider the clique of all such interpretations; moreoever, each interpretation must be equipped with a braid recording a path from the configuration in $I^2$ to the vertical configuration of points corresponding to that particular tensor product in $B$.  Clique maps are then those maps in $B$ that are just coherence maps (and braidings) commuting with the ``linearising'' braids.  We then take clique maps between those as the 3-cells in our doubly-degenerate tricategory.  The non-strictness of the interchange is then absorbed into the cliques.

Joyal and Kock are starting from a different framework of train tracks, but once we set up our framework to replace the train tracks the rest of the construction and proof is very similar.  The main construction and result of the paper are then as follows.

\subsubsection*{Main construction}

Given a braided monoidal category $B$ we define a doubly-degenerate vertically weak tricategory $\Sigma B$ whose 2-cells are certain cliques of configurations of points of $I^2$ labelled by objects of $B$, and whose 3-cells are pulled back from $B$ via a clique construction.

\subsubsection*{Main theorem}

Given any braided monoidal category $B$, the underlying braided monoidal category of $\Sigma B$ is braided monoidal equivalent to $B$.

\bs

There are several critical subtleties to this, which is why we have to leave vertical associativity weak but can use fully doubly-degenerate structures, where Joyal and Kock were able to have all associativity strict but could not use fully doubly-degenerate structures.  In future work we will extend the result to totalities, exhibiting a biequivalence of appropriate bicategories.   This method generalises, by omitting the ``slide'' cliques, to prove the corresponding result for doubly degenerate Trimble 3-categories \cite{tri1}; this will also be in future work.  In fact, our original aim was to make the construction for Trimble 3-categories, but in doing so we realised that we could alter the construction slightly to make it work for the vertically weak tricategory case.  

The structure of the paper is as follows.  In Section~\ref{one} we define the type of semi-strict tricategories we will be studying, which are categories enriched in the category of bicategories and strict functors. We will characterise the doubly-degenerate ones as certain categories with two monoidal structures, one weak and one strict, satisfying strict interchange, and show that each one has an associated braided monoidal category. Note that these categories with two monoidal structures are a particularly strict form of 2-monoidal category \cite{am1}. In general in a 2-monoidal category interchange is only required to be lax; it is known that braided monoidal categories can be built from 2-monoidal categories where interchange is weak, but in this work we deal with the case where interchange is strict.

In Section~\ref{cliques} we give some background on cliques that will be needed for the main construction and proof; none of this section is new.  In Section~\ref{three} we introduce the labelled configuration spaces of points that we will use for the main construction, together with their braided monoidal structure. The new content in this section is the definition of slide cliques, which we use to ensure that our horizontal tensor product is strict. In Section~\ref{mainconstruction} we give the main construction, which starts with any braided monoidal category $B$ and produces a doubly-degenerate \bicats-category $\Sigma B$ from it. In Section~\ref{maintheorem} we prove the main theorem, which is that the braided monoidal category associated with $\Sigma B$ is braided monoidal equivalent to $B$, showing that all braided monoidal categories arise from doubly-degenerate \bicats-categories.  In Section~\ref{future} we give a brief account of future work.

\subsubsection*{How to read this paper quickly}

To read this paper quickly, experts may proceed by reading as follows:

\begin{enumerate}

\item The characterisation of doubly-degenerate \bicats-categories in 
Proposition~\ref{dd-char}.

\item The definition of slide cliques in Section~\ref{slidemaps}.

\item The construction of $\Sigma B$: the underlying category (Section~\ref{sigmacategory}), the tensor products (Section~\ref{sigmatensor}) and interchange (Section~\ref{sigmainterchange}).

\item The main theorem, stated as Theorem~\ref{theoremmain}.

\end{enumerate}

\subsubsection*{Terminology conventions}

We will use the general terminology convention where ``strict'' means that coherence constraints are identities and ``weak'' means they are isomorphisms (which are called strong by some authors).  All our braided monoidal categories have a weak monoidal structure.


\section{Doubly-degenerate \bicats-categories}
\label{one}

In this section we will set up the framework of the semi-strict tricategories we will be studying.  First we perform a dimension shift to characterise them as categories with two monoidal structures, and we then define the underlying braided monoidal category of such a structure.

\subsection{\bicats-categories}

We write \bicats\ for the category of bicategories and strict functors between them. \bicats\ has finite products; we will study categories enriched in \bicats\ with respect to the cartesian monoidal structure. (We will not consider any other monoidal structure on \bicats.)  Note that we are using standard enrichment in a monoidal category, not any kind of weak enrichment. We will refer to composition along bounding $k$-cells as $k$-composition, but we will also refer to 0-composition as horizontal, and 1-composition as vertical.

Thus a \bicats-category is a form of semi-strict tricategory with
\begin{itemize}
\item a set of 0-cells, and hom-bicategories giving 1-cells, 2-cells and 3-cells,
\item 2-composition coming from the vertical composition in the hom-bicategories, thus it is strictly associative and unital,
\item 1-composition coming from the horizontal composition in the hom-bicategories, thus it is weakly associative and weakly unital,
\item 0-composition coming from the composition in the enriched category, thus it is strictly associative and strictly unital,
\item interchange between 1-composition and 2-composition is strict as it comes from interchange in the hom-bicategories,
\item interchange between 0-composition and any other kind is strict as it comes from the functoriality of morphisms in \bicats, which is strict.
\end{itemize}
We will also refer to these as ``vertically weak tricategories'', by which we mean that the only weakness is in the vertical composition. 

\subsection{Doubly-degenerate \bicats-categories by dimension shift}

A doubly-degenerate \bicats-category has only one 0-cell and only one 1-cell.  As usual we perform a ``dimension shift'' and look at the 2-cells and 3-cells as a category with extra structure.  In this case we have a category with two monoidal structures:

\begin{itemize}

\item a strict ``horizontal'' monoidal structure, coming from 0-composition of the original tricategory, which we will write as $a|b$, and

\item a weak ``vertical'' monoidal structure, coming from 1-composition, which we will write as $\frac{a}{b}$.

\end{itemize}

\noi These satisfy strict interchange, which with this notation can then be written as 
\[\setlength{\arraycolsep}{2.7pt}
\renewcommand{\arraystretch}{1.4}
\begin{array}{ccc}
a & \psline(0,-0.8)(0,3.3) & b \\
\hline 
c & \psline(0,-1)(0,2.8) & d
\end{array}
\hs{1}
= \hs{1}
\setlength{\arraycolsep}{5pt}
\begin{array}{c|c}
a & b \\[-8pt]
\psline(-1.6,0.5)(1.4,0.5) & \psline(-1.4,0.5)(1.6,0.5) \\[-8pt]
c & d 
\end{array}
\]

\noi In general there are three types of weakness in a tricategory:

\begin{itemize}
\item 0-composition (horizontal),
\item 1-composition (vertical), and
\item interchange between them.
\end{itemize}

\noi but in the semi-strict structures we are studying only vertical composition is weak. The following characterisation is straightforward but it will help later for us to make it explicit.

\begin{prop}\label{dd-char}
A doubly degenerate \bicats-category is precisely a category equipped with
two monoidal structures, one weak and one strict, satisfying strict interchange.
\end{prop}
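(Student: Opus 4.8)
The plan is to prove this by a routine ``dimension shift'': unwind the definition of a \bicats-category in the doubly-degenerate case, match the resulting data piece by piece against the description ``a category with two monoidal structures, one weak and one strict, satisfying strict interchange'', and then observe that the two translations are mutually inverse.

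First I would note that a \bicats-category with a single $0$-cell is precisely a monoid in the cartesian monoidal category $(\bicats,\times,1)$, and that requiring a single $1$-cell says exactly that the underlying bicategory $\mathcal B$ of this monoid has a single object. By the standard delooping correspondence, a one-object bicategory is the same thing as a (weak) monoidal category: the hom-category of $\mathcal B$ --- whose objects are the $2$-cells and whose morphisms are the $3$-cells of the original structure, i.e.\ the category $\mathcal C$ appearing in the dimension shift --- acquires a weakly associative, weakly unital tensor from the horizontal composition of $\mathcal B$, with unit the unique object of $\mathcal B$. This is the weak ``vertical'' structure $\frac{a}{b}$.

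Next I would unwind the monoid structure on $\mathcal B$. Since a product of one-object bicategories is again one-object and delooping carries it to the product of the delooped monoidal categories, the multiplication $\mathcal B\times\mathcal B\to\mathcal B$ is a strict functor of one-object bicategories, hence the same as a strict monoidal functor $\mathcal C\times\mathcal C\to\mathcal C$; writing it $a|b$ on $2$-cells and reading off its action on $3$-cells, strictness of this functor of bicategories says exactly that $|$ strictly preserves the tensor $\frac{-}{-}$ and its unit, i.e.\ that the interchange law
\[
\frac{a|b}{c|d}\;=\;\frac{a}{c}\,\Big|\,\frac{b}{d}
\]
holds on the nose and that the unit of $|$ equals the unit of $\frac{-}{-}$, both being the unique $1$-cell. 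Finally, the associativity and unit axioms of the monoid hold strictly --- enrichment in a monoidal category is strict by definition --- and these translate precisely into $|$ being a strictly associative, strictly unital monoidal structure. Thus a doubly-degenerate \bicats-category yields a category carrying a weak monoidal structure $\frac{-}{-}$ and a strict one $|$ satisfying strict interchange.

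For the converse I would run this bookkeeping in reverse: deloop the weak structure $\frac{-}{-}$ to a one-object bicategory $\mathcal B$; note that $\mathcal B\times\mathcal B$ is the delooping of $\mathcal C\times\mathcal C$, so that strict interchange makes $|$ into a strict functor $\mathcal B\times\mathcal B\to\mathcal B$; and check that strict associativity and unitality of $|$ are exactly the monoid axioms, yielding a \bicats-category with one $0$-cell and (since $\mathcal B$ has one object) one $1$-cell. The two passages are visibly mutually inverse. The step I expect to need the most care --- indeed the only place where anything non-formal happens --- is verifying that ``strict functor of bicategories between deloopings'' unpacks to exactly ``strict monoidal functor'', no more and no less, and in particular that the horizontal unit is forced to be the unique $1$-cell and hence to coincide with the vertical unit; ``strict interchange'' in the statement is to be read as including this coincidence of units. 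Everything else is unwinding of the two definitions.
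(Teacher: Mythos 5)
Your proposal is correct and follows the same dimension-shift reasoning the paper uses; the paper in fact states Proposition~\ref{dd-char} without proof, calling it straightforward, and your monoid-in-$(\bicats,\times,1)$/delooping unwinding is just a careful formalisation of the informal discussion in the surrounding section. The one point you flag as needing care --- that a strict functor between deloopings is exactly a strict monoidal functor, forcing the two units to coincide and the constraint cells to be preserved --- is indeed the only non-formal content, and you handle it correctly.
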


\subsection{Underlying braided monoidal category}

Classical coherence for tricategories \cite{gps1} shows that we can make everything strict except interchange.  Joyal and Kock \cite{jk1} study making everything strict except horizontal composition (in fact everything is strict except horizontal units).  We are studying the remaining case, where everything is strict except vertical composition.  We will show that this situation is ``weak enough'' to produce braided monoidal categories in the doubly-degenerate case. 

We will follow the methods and techniques of \cite{jk1} quite closely.  So first, for any doubly-degenerate \bicats-category $A$ we exhibit its associated braided monoidal category $UA$.  $A$ has an underlying category of 2-cells and 3-cells which by abuse of notation we also write as $A$.  Since it is, among other things, a form of doubly-degenerate tricategory, it becomes a braided monoidal category with respect to the monoidal structure coming from 1-composition, which in this case is a weak monoidal structure.  The braiding then comes from the Eckmann--Hilton argument in the usual way (see for example \cite{cg3}), but simplified by the horizontal composition being strict. Note that there are two possible choices of orientation for the braiding, depending on which way round the left and right units are applied. We will pick the following convention:


\[\ps(0,-20)(30,90)

\rput(10,0){
\psset{unit=0.4mm,arrowsize=1.5pt 
2,arrowlength=1.4,linewidth=0.8pt,labelsep=2pt, nodesep=0pt}
\pspicture(0,0)(40,60)


%
%
%

\rput(0,240){\rnode{a6}{\ps(0,0)
\rput(-10,0){
\parabola[linecolor=black](0,0)(10,-8)
\parabola(0,0)(10,8)

\parabola[linecolor=black](0,0)(10,8)
\parabola(0,0)(10,-8)
\psline(0,0)(20,0)

\rput(10,4){\scr $a$}
\rput(10,-4){\scr $b$}}
\eps}}

\rput(0,200){\rnode{a7}{\ps(0,0)
\rput(-20,0){\parabola[linecolor=black](0,-2)(10,-10)
\parabola(0,2)(10,10)
\parabola(0,-2)(10,-10)
\parabola(20,2)(30,10)

\rput(0,2){\rput(10,4){\scr $1$}}
\rput(0,-2){\rput(10,-4){\scr $b$}}

\parabola[linecolor=black](20,2)(30,10)
\parabola(20,-2)(30,-10)

\rput(20,2){\rput(10,4){\scr $a$}}
\rput(20,-2){\rput(10,-4){\scr $1$}}

\psline(0,2)(20,2)
\psline(20,2)(40,2)

\psline(0,-2)(20,-2)
\psline(20,-2)(40,-2)}
\eps}}

\rput(0,160){\rnode{a8}{\ps(0,0)
\rput(-22,0){
\parabola[linecolor=black](0,0)(10,-8)
\parabola(0,0)(10,8)
\parabola(0,0)(10,-8)
\psline(0,0)(20,0)

\rput(0,0){\rput(10,4){\scr $1$}}
\rput(0,0){\rput(10,-4){\scr $b$}}

\rput(24,0){ 
\parabola(0,0)(10,8)

\parabola[linecolor=black](0,0)(10,8)
\parabola(0,0)(10,-8)
\psline(0,0)(20,0)

\rput(0,0){\rput(10,4){\scr $a$}}
\rput(0,0){\rput(10,-4){\scr $1$}}
}}\eps}}

\rput(0,120){\rnode{a9}{\ps(0,0)
\rput(-20,0){
\parabola[linecolor=black](0,0)(10,6)
\parabola(0,0)(10,6)

\parabola[linecolor=black](0,0)(10,-6)
\parabola(0,0)(10,-6)

\parabola[linecolor=black](20,0)(30,6)
\parabola(20,0)(30,6)

\parabola[linecolor=black](20,0)(30,-6)
\parabola(20,0)(30,-6)

\rput(0,0){\rput(10,0){\scr $b$}}
\rput(20,0){\rput(10,0){\scr $a$}}}\eps}}

%


\rput(0,80){\rnode{a10}{\ps(0,0)
\rput(-22,0){
\parabola[linecolor=black](0,0)(10,8)
\parabola(0,0)(10,8)
\parabola(0,0)(10,-8)
\psline(0,0)(20,0)

\rput(0,0){\rput(10,4){\scr $b$}}
\rput(0,0){\rput(10,-4){\scr $1$}}

\rput(24,0){
\parabola(0,0)(10,8)

\parabola[linecolor=black](0,0)(10,-8)
\parabola(0,0)(10,-8)
\psline(0,0)(20,0)

\rput(0,0){\rput(10,4){\scr $1$}}
\rput(0,0){\rput(10,-4){\scr $a$}}}}
\eps}}


%

\rput(0,40){\rnode{a11}{\rput(-20,0){
\parabola[linecolor=black](0,2)(10,10)
\parabola(0,2)(10,10)
\parabola(0,-2)(10,-10)
\parabola(20,2)(30,10)

\rput(0,2){\rput(10,4){\scr $b$}}
\rput(0,-2){\rput(10,-4){\scr $1$}}

\parabola[linecolor=black](20,-2)(30,-10)
\parabola(20,-2)(30,-10)
\rput(20,2){\rput(10,4){\scr $1$}}
\rput(20,-2){\rput(10,-4){\scr $a$}}

\psline(0,2)(20,2)
\psline(20,2)(40,2)

\psline(0,-2)(20,-2)
\psline(20,-2)(40,-2)}}}

%


\rput(0,0){\rnode{a12}{\rput(-10,0){
\parabola[linecolor=black](0,0)(10,8)
\parabola(0,0)(10,8)

\rput(0,0){\rput(10,4){\scr $b$}}
\rput(0,0){\rput(10,-4){\scr $a$}}

\parabola[linecolor=black](0,0)(10,-8)
\parabola(0,0)(10,-8)
\psline(0,0)(20,0)}}}

\endpspicture
}


\psset{nodesep=16pt, labelsep=20pt}

\ncline[doubleline=true,nodesep=16pt]{-}{a6}{a7} \naput{strict horizontal units}
\ncline[doubleline=true, arrows=-]{a7}{a8} \naput{strict interchange}
\ncline[nodesep=12pt]{->}{a8}{a9} \nbput[labelsep=2pt]{\rotatebox{90}{\scr $\sim$}} \naput{weak vertical units}
\ncline[nodesep=12pt]{->}{a9}{a10} \nbput[labelsep=2pt]{\rotatebox{90}{\scr $\sim$}} \naput{weak vertical units}
\ncline[doubleline=true, arrows=-]{a10}{a11} \naput{strict interchange}
\ncline[doubleline=true,nodesep=16pt]{-}{a11}{a12} \naput{strict horizontal units}

\endpspicture\]

The question then is whether all braided monoidal categories arise from a \ddbicatscat\ in this way. We follow Joyal and Kock \cite{jk1} and show that given any \bmc\ $B$ there is a \dd\ \bicats-category $\Sigma B$ such that $U\Sigma B$ is braided monoidal equivalent to $B$. This is evidently only an object-level correspondence, rather than dealing with the totalities of such structures, but hints at $U$ and $\Sigma$ giving an equivalence between the totalities; a full equivalence requires further theory so we defer it to a future work.


\section{Preliminaries on cliques}
\label{cliques}

Our construction follows that of \cite{jk1} in using cliques, so we include the definitions we need here; none of this is new.  As we have found the existing literature to be somewhat piecemeal (following historical developments), we judge that there is broader benefit to some exposition here.

\subsection{Cliques and their morphisms}

Cliques were mentioned briefly in \cite{js2}, referred to again with a little more development in \cite{js1} and then the theory was developed further in \cite{jk1}; they are also studied in \cite{mak3} under the name ana-objects.  Essentially, a clique is a more categorically satisfactory way of implementing equivalence relations in which certain isomorphic objects are to be considered ``the same''.  Instead of identifying the objects in question, we assemble them into cliques, which are, essentially, collections of objects with uniquely specified isomorphisms between them.  The cliques are then considered to be objects of a new category, so that the entire clique is now considered to be a single object, without us having to quotient out by anything.  In a sense, the power of the theory of cliques comes from the way morphisms are handled, which is typically a very unsatisfactory aspect of performing quotients on a category.

So a clique in a category is essentially a family of objects equipped with uniquely specified isomorphisms between them.  The subtlety is that objects might be repeated, hence we use an indexing category to specify the objects of a clique.  Sometimes that subtlety is not needed so it is not so crucial to emphasise the indexing category, hence in the earlier definitions (for example \cite{js2}) the indexing category is not emphasised, but in later works (for example \cite{jk1}) the indexing category is prominent.  The definition in \cite{js2} is as follows, and this is often how we will think of cliques in practice.

\begin{definition}
A \emph{clique} in a category $\cC$ is a non-empty family 
\[\{x_j \ | \ j \in J\}\] 
of objects of $\cC$ together with a family
\[\{ x_{jk} \: x_j \lra x_k \ | \ (j, k) \in J \times J \}\]
of maps such that $x_{jj} = 1$ and $x_{jk}x_{kl} = x_{jl}$ (so in particular $x_{jk} = x_{kj}^{-1})$).  The isomorphisms $x_{jk}$ are called \emph{connecting isomorphisms}.

A morphism of cliques 
\[ f\: \{x_j \ | \ j \in J \} \ltra \{y_k \ | \ k \in K \}\]
is a family of maps
\[f_{jk} \: x_j \lra y_k\]
such that the following diagram commutes for all $(j,j') \in J^2$, $(k,k') \in K^2$:
\[
\psset{unit=0.1cm,labelsep=3pt,nodesep=3pt}
\pspicture(-10,-10)(10,10)



\rput(-10,8){\rnode{a1}{$x_j$}}  
\rput(10,8){\rnode{a2}{$y_k$}}  
\rput(-10,-8){\rnode{a3}{$x_{j'}$}}  
\rput(10,-8){\rnode{a4}{$y_{k'}$}}  

\ncline{->}{a1}{a2} \naput{{\scriptsize $f_{jk}$}} 
\ncline{->}{a3}{a4} \nbput{{\scriptsize $f_{j'k'}$}} 
\ncline{->}{a1}{a3} \nbput{{\scriptsize $u_{jj'}$}} 
\ncline{->}{a2}{a4} \naput{{\scriptsize $u_{kk'}$}} 

\endpspicture
\]

\end{definition}

The following definitions of clique and clique morphism are equivalent to the above but are expressed more abstractly; these definitions are given in \cite{jk1} and the greater reference to the indexing category facilitates the more advanced constructions.  The idea is to define a clique as a functor from an indiscrete category.

%
%
%



\begin{definition}
Given a set (or collection) $J$ let $\ol{J}$ denote the groupoid whose object set is $J$ and whose morphism set is $J \times J$, with the two projections giving source and target.  If $J$ is non-empty then $\ol{J}$ is contractible.  A \emph{clique} in a category $\cC$ is a functor $\ol{J} \tra \cC$ for some non-empty $J$.  
\end{definition}

%
%


\begin{remark} \label{repeatobjects}
A collection of objects in $\cC$ together with unique specified isomorphisms between them is a clique, but in general a clique may contain multiple copies of the same object.   In the former case the indexing category can be suppressed (as it is essentially the same as the objects of the clique itself), but in the latter case it cannot.  The slide cliques we use will be of the former type, but the cliques we use in the main construction will crucially need to be the latter.
\end{remark}

\begin{definition}
A \emph{morphism of cliques} $\{x_j \ | \ j \in J\} \tra \{y_k \ | \ k \in K\}$ is a natural transformation

\[\psset{unit=0.08cm,labelsep=0pt,nodesep=3pt}
\pspicture(20,22)


\rput[B](0,20){\Rnode{a1}{$\ol{J \times K}$}} 
\rput[B](20,20){\Rnode{a2}{$\ol{K}$}} 

\rput[B](0,0){\Rnode{b1}{$\ol{J}$}}   
\rput[B](20,0){\Rnode{b2}{$\cC$}}  

\psset{nodesep=3pt,labelsep=2pt,arrows=->}
\ncline{a1}{a2}\naput{{\scriptsize $$}} 
\ncline{b1}{b2}\nbput{{\scriptsize $x$}} 
\ncline{a1}{b1}\nbput{{\scriptsize $$}} 
\ncline{a2}{b2}\naput{{\scriptsize $y$}} 

\rput(10.5,11.5){
\psset{labelsep=1.5pt,nodesep=0pt}
\pnode(-3,-3){c1}
\pnode(3,3){c2}
\ncline[doubleline=true,arrowinset=0.6,arrowlength=0.8,arrowsize=0.5pt 2.1]{c1}{c2} \nbput[npos=0.4]{{\scriptsize $$}}
}

\endpspicture\]
%
%
%
%
%
%
%
\end{definition}

\begin{remark}
Note that a clique morphism is completely determined by specifying any one of its components, and moreover any morphism $x_j \tra y_k$ (for any $j$ and $k$) will specify a clique map.  The subtlety is that clique maps specified by

\[\ps(-10,-2)(10,6)

\rput[B](-8,6){\Rnode{a1}{$x_j$}}
\rput[B](8,6){\Rnode{a2}{$y_k$}}
\rput[B](-8,0){\Rnode{b1}{$x_{j'}$}}
\rput[B](8,0){\Rnode{b2}{$y_{k'}$}}

\ncline{->}{a1}{a2} \naput{\scr $f$}
\ncline{->}{b1}{b2} \naput{\scr $f'$}

\eps\]
represent the same clique map whenever the square involving connecting isomorphisms commutes.  In practice this is typically how we will specify clique maps (by a single component). 
\end{remark}

\begin{definition}
We write $\tilde{\cC}$ for the category of cliques in $\cC$ and clique morphisms.
\end{definition}

For composition, note that when clique maps are specified by a single component they must be composed via the appropriate connecting isomorphisms.  That is, given cliques
\[\{x_j \ | \ j \in J\}, \hs{0.5} \{y_k \ | \ k \in K\}, \hs{0.5} \{z_l \ | \ l \in L\}\]
and clique maps specified by the following components
\[x_j \tmap{f} y_k, \hs{0.5} y_{k'} \tmap{g} z_l\]
the composite may be given by the following component:
\[
\psset{unit=0.1cm,labelsep=2pt,nodesep=3pt}
\pspicture(0,-1)(44,18)


\rput[B](0,15){\Rnode{a1}{$x_j$}}  
\rput[B](20,15){\Rnode{a2}{$y_k$}}  

\rput[B](20,0){\Rnode{b2}{$y_{k'}$}}  
\rput[B](40,0){\Rnode{b3}{$z_l$}}  

\ncline{->}{a1}{a2} \naput{{\scriptsize $f$}} 

\ncline{->}{b2}{b3} \nbput{{\scriptsize $g$}} 

\ncline{->}{a2}{b2} \naput{{\scriptsize $y_{kk'}$}} \nbput{\rotatebox{90}{\scr $\sim$}}

\endpspicture
\]

\noi as seen by the composite naturality square:

\[
\psset{unit=0.1cm,labelsep=2pt,nodesep=3pt}
\pspicture(0,-3)(44,19)


\rput[B](0,16){\Rnode{a1}{$x_j$}}  
\rput[B](20,16){\Rnode{a2}{$y_k$}}  
\rput[B](40,16){\Rnode{a3}{$z_l$}}  

\rput[B](0,0){\Rnode{b1}{$x_j$}}  
\rput[B](20,0){\Rnode{b2}{$y_{k'}$}}  
\rput[B](40,0){\Rnode{b3}{$z_l$}}  

\ncline{->}{a1}{a2} \naput{{\scriptsize $f$}} 
\ncline{->}{b1}{b2} \nbput{{\scriptsize $y_{kk'} \circ f$}} 

\ncline{->}{a2}{a3} \naput{{\scriptsize $g\circ y_{kk'}$}} 
\ncline{->}{b2}{b3} \nbput{{\scriptsize $g$}} 

\ncline{->}{a1}{b1} \nbput{{\scriptsize $x_{jj} = 1$}} \naput{\rotatebox{90}{\scr $\sim$}} 
\ncline{->}{a2}{b2} \naput{{\scriptsize $y_{kk'}$}} \nbput{\rotatebox{90}{\scr $\sim$}} 
\ncline{->}{a3}{b3} \naput{{\scriptsize $z_{ll}=1$}} \nbput{\rotatebox{90}{\scr $\sim$}} 

\endpspicture
\]

Note that this means we might have two clique maps specified by identity morphisms, but their composite is not specified by an identity because we have picked up a non-trivial connecting isomorphism in the middle.

\begin{remarks}\label{cliqueremarks}
Some observations about identities and isomorphisms in the category of cliques will help us later.  First note that the identity clique map 
\[ \{x_j \ |\ j \in J\} \ltra \{x_j \ |\ j \in J\} \]
has as its components the connecting isomorphisms, thus it may be represented by an identity component on any object of the clique
\[ x_j \mmmmmtmap{1_{x_j}} x_j \]
or, more generally, if objects $x_j$ and $x_k$ are used to represent the source and target, then the identity clique map is represented by the connecting isomorphism
\[ x_j \mmmmmtmap{x_{jk}} x_k.\]
Thus an identity clique map can be specified by a non-identity morphism; conversely a non-identity clique map can be specified by an identity morphism, as it might be a morphism between different cliques that happen to have an object in common.

Also note that an isomorphism in the category of cliques is a clique morphism each of whose components is an isomorphism; it is sufficient for any individual component to be an isomorphism.  In particular, cliques that are isomorphic in the category of cliques do not have to have isomorphic indexing categories.

\end{remarks}


\noi We will not need the following result but include it for completeness.

\begin{proposition}{\cite{jk1}}
There is a canonical equivalence of categories
\[\cC \tra \wt{\cC}\]
given by sending an object $x \in \cC$ to the singleton clique on the object $x$.
\end{proposition}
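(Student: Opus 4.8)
The plan is to produce the functor explicitly, show it is fully faithful and essentially surjective, and then invoke the standard criterion for an equivalence of categories.

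First I would pin down the functor, say $E\colon \cC \to \wt{\cC}$. On objects it sends $x$ to the singleton clique $\{x\}$ indexed by a one-element set, with its unique connecting isomorphism (the identity $1_x$). On a morphism $f\colon x \to y$ it returns the clique map $\{x\}\to\{y\}$ specified by the single component $f$. Here I would check functoriality with a little care, since — as the remarks above stress — composing clique maps that are specified by single components introduces a connecting isomorphism in the middle; but for singleton cliques every connecting isomorphism is an identity, so $E(g)\circ E(f)$ is specified by $g\circ 1\circ f = g\circ f$, i.e.\ equals $E(g\circ f)$, and $E(1_x)$ is specified by $1_x$, hence is the identity clique map. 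So $E$ is a genuine functor.

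Next, fullness and faithfulness. A clique morphism $\{x\}\to\{y\}$ between singleton cliques is, unwinding the definition, a natural transformation between functors out of one-object indiscrete categories, equivalently a single map $x\to y$; both indexing categories have exactly one object, so the naturality squares are vacuous and there is no further condition. Hence the map $\cC(x,y)\to\wt{\cC}(\{x\},\{y\})$ induced by $E$ is a bijection, and $E$ is fully faithful. For essential surjectivity, take an arbitrary clique $x=\{x_j \mid j\in J\}$; non-emptiness of $J$ lets me choose some $j_0\in J$, and I would exhibit the clique map $x\to\{x_{j_0}\}$ specified by the identity component $1_{x_{j_0}}$ (equivalently, whose component $x_j\to x_{j_0}$ is the connecting isomorphism $x_{j j_0}$). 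By Remarks~\ref{cliqueremarks} a clique map is an isomorphism in $\wt{\cC}$ as soon as one component is an isomorphism, and here a component is an identity; so $x\cong E(x_{j_0})$ in $\wt{\cC}$. Combining the three properties, $E$ is an equivalence.

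The argument is essentially routine, so there is no serious obstacle; the only points needing attention are the bookkeeping around how composition in $\wt{\cC}$ interacts with the ``specify by one component'' shorthand (which, crucially, trivialises for singleton cliques) and the use of non-emptiness of the indexing set in the essential-surjectivity step.
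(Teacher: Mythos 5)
Your argument is correct and complete: the functoriality check (trivial connecting isomorphisms for singletons), the identification of clique maps between singleton cliques with ordinary morphisms, and the essential-surjectivity step via the connecting isomorphisms $x_{jj_0}$ all go through exactly as you describe. The paper states this proposition without proof (citing Joyal--Kock and noting it is not needed later), and your proof is the standard one that would be expected there.
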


\subsection{Cliques in braided monoidal categories}

Eventually we will be taking cliques in a braided monoidal category, and will need to know that a canonical braided monoidal structure is induced on the category of cliques.  The structure is exactly as expected, with only the notation being slightly cumbersome; in practice we will mostly suppress the notation.

If $(\cC, \otimes, I)$ is a monoidal category, then there is a canonical monoidal structure on the category category of cliques $\wt{\cC}$, given in \cite{jk1}.  Its tensor product $\wt{\otimes}$ is defined pointwise. That is, given cliques $x$ and $y$ indexed by $J$ and $K$ respectively, the tensor product $x \wt{\otimes} y$ is indexed by $J \times K$ and is defined as follows:

\begin{itemize}

\item objects: $(x \wt{\otimes} y)_{j,k} := x_j \otimes y_k$
\item connecting isomorphisms: $(x \wt{\otimes} y)_{(j,k)(p,q)} := x_{jp} \otimes y_{kq}$

\end{itemize}

\noi The unit $\wt{I}$ is the singleton clique $\ast \tra I$.  Note that even if $\cC$ is a strict monoidal category, the monoidal structure induced on $\wt{\cC}$ will be weak as it involves the weakness of cartesian products of the indexing sets.

Furthermore, if $\cC$ has a braiding then there is an induced braiding on $\wt{\cC}$; the braiding
\[x \wt{\otimes} y \mtra y \wt{\otimes} x\]
has components given by the braiding in $\cC$
\[x_j \otimes y_k \mtra y_k \otimes x_j.\]

\subsection{Induced functors between clique categories}
\label{inducedfunctors}

As in \cite{jk1} we will need two constructions on functors. The first is the lowershriek, which can also be thought of as a direct image.

\begin{definition}(Lowershriek) Given a functor
\[F \: \cC \tra \cD\]
we define a functor on clique categories
\[F_! \: \wt{\cC} \tra \wt{\cD}\]
as follows.  Given a clique $\ol{J} \tmap{x} \cC$, $F_!x$ is the clique
\[\ol{J} \tmap{x} \cC \tmap{F} \cD\]
This extends to a functor in the obvious way.
\end{definition}

The next construction is of inverse image cliques and takes more technical build-up. The idea is that given a clique in $\cD$ we can attempt to take its ``essential pre-image'' under a functor $\cC \tmap{F} \cD$, which might mean all the objects in $\cC$ whose image under $F$ is isomorphic to an object in the clique in question in $\cD$.  However, to make this algebraic we should record the isomorphisms in question.  To ensure this essential pre-image is non-empty we need $F$ to be essentially surjective, and to ensure that the resulting structure is a clique we need $F$ to be full and faithful.  To express the construction precisely we make use of the 2-fibred product.
\[\psset{unit=0.08cm,labelsep=0pt,nodesep=3pt}
\pspicture(0,1)(20,23)


\rput[B](0,20){\Rnode{a1}{$\cC \dtimes_{\hs{-0.2}\cD} \ol{J}$}} 
\rput[B](20,20){\Rnode{a2}{$\ol{J}$}} 

\rput[B](0,0){\Rnode{b1}{$\cC$}}   
\rput[B](20,0){\Rnode{b2}{$\cD$}}  

\psset{nodesep=3pt,labelsep=2pt,arrows=->}
\ncline{a1}{a2}\naput{{\scriptsize $$}} 
\ncline{b1}{b2}\nbput{{\scriptsize $F$}} 
\ncline{a1}{b1}\nbput{{\scriptsize $$}} 
\ncline{a2}{b2}\naput{{\scriptsize $x$}} 

\rput(10.5,10.5){
\psset{labelsep=1.5pt,nodesep=0pt}
\pnode(3,3){c1}
\pnode(-3,-3){c2}
\ncline[doubleline=true,arrowinset=0.6,arrowlength=0.8,arrowsize=0.5pt 2.1]{c1}{c2} \nbput[npos=0.4]{{\scriptsize $\sim$}}
}

\endpspicture\]
which we will elucidate after the definition.

\begin{definition}(Inverse image clique \cite{jk1})
Given an equivalence of categories $F: \cC \tmap{\sim} \cD$ we define a functor on clique categories
\[F^*: \wt{\cD} \mtmap{\sim} \wt{\cC}\]
as follows.  Suppose $x \: \ol{J} \tra \cD$ is a clique.  Then the \emph{inverse image clique} $F^*x$ is the clique:
\[\cC \dtimes_{\hs{-0.2}\cD} \hs{0.1}\ol{J} \mtra \cC\]
where $\cC \dtimes_{\hs{-0.2}\cD} \hs{0.1}\ol{J}$ denotes a 2-fibred product and the functor above is the projection to $\cC$. This extends to a functor $F^*$ which is also an equivalence of categories.
\end{definition}

\begin{remarks}

As in \cite{jk1} we will take the objects of $\cC \dtimes_{\hs{-0.2}\cD} \ol{J}$ to be triples $(c, j, \gamma)$ where

\begin{itemize}
\item $c\in \cC$, 
\item $j \in J$,  and
\item $\gamma: x_j \tmap{\sim} Fc \in \cD$.
\end{itemize}
(In fact in \cite{jk1} $\gamma$ is given in the inverse direction, but we will use this direction as it better matches our intuition for the braid diagrams we will draw.)

A morphism 
\[(c_0, j_0, \gamma_0) \tra (c_1, j_1, \gamma_1) \ \in \cC \dtimes_{\hs{-0.2}\cD} \ol{J}\]
is then a morphism 
\[c_0 \tmap{w} c_1 \in \cC\]
such that the following diagram commutes:

\[
\psset{unit=0.1cm,labelsep=3pt,nodesep=3pt}
\pspicture(0,-4)(20,22)



\rput(20,18){\rnode{a1}{$Fc_0$}}  
\rput(0,18){\rnode{a2}{$x_{j_0}$}}  
\rput(20,0){\rnode{a3}{$Fc_1$}}  
\rput(0,0){\rnode{a4}{$x_{j_1}$}}  

\ncline{->}{a2}{a1} \naput{{\scriptsize $\gamma_0$}} 
\nbput[labelsep=2pt]{\scr $\sim$}
\ncline{->}{a4}{a3} \nbput{{\scriptsize $\gamma_1$}} 
\naput[labelsep=1pt]{\scr $\sim$}
\ncline{->}{a1}{a3} \naput{{\scriptsize $Fw$}} 
\ncline{->}{a2}{a4} \nbput{{\scriptsize $x_{j_0j_1}$}} 
\naput{\rotatebox{90}{\scr $\sim$}}
\endpspicture
\]

%
%
%
%
%
%
%
Note that $F$ being essentially surjective ensures that this 2-fibred product is non-empty, and $F$ being full and faithful ensures that it is contractible, so $F^*x$ is indeed a clique.
\end{remarks}

An example may help at this point.

\begin{example}
Take $J = 1$ so $x:\ol{J} \lra \cD$ is a singleton clique, on an object $d$, say.  As $F$ is essentially surjective we know there exists an object $c$ with an isomorphism $d \tmap{\gamma} Fc$.

Now $F^*x$ is indexed by $\cC \dtimes_{\hs{-0.2}\cD} 1$ whose objects are all the pairs $(c, \gamma)$ with
\[\gamma\: d \tmap{\sim} Fc\]
Given any two such objects $(c, \gamma)$ and $(c', \gamma')$ we have a composite
\[Fc \mtmap{\gamma^{-1}} d \mtmap{\gamma'} Fc'\]
and as $F$ is full and faithful we know there is a unique morphism $c \tmap{w} c'$ making the following diagram commute
\[
\psset{unit=0.1cm,labelsep=3pt,nodesep=3pt}
\pspicture(0,-3)(20,21)



\rput(20,18){\rnode{a1}{$Fc$}}  
\rput(0,18){\rnode{a2}{$d$}}  
\rput(20,0){\rnode{a3}{$Fc'$}}  
\rput(0,0){\rnode{a4}{$d$}}  

\ncline{->}{a2}{a1} \naput{{\scriptsize $\gamma$}} 
\nbput[labelsep=2pt]{\scr $\sim$}
\ncline{->}{a4}{a3} \nbput{{\scriptsize $\gamma'$}} 
\naput[labelsep=1pt]{\scr $\sim$}
\ncline{->}{a1}{a3} \naput{{\scriptsize $Fw$}} 
\ncline{->}{a2}{a4} \nbput{{\scriptsize $1$}} 
\naput{\rotatebox{90}{\scr $\sim$}}
\endpspicture
\]
So $\cC \dtimes_{\hs{-0.2}\cD} 1$ is indeed non-empty, with a unique isomorphism between any pair of objects.  The clique $F^*x$ is obtained by projecting onto just the $\cC$ component.  Note that an object $c$ may appear more than once, if there are distinct isomorphisms
\[\gamma, \gamma'\: d \tmap{\sim} Fc\]
so we cannot suppress the indexing category. 
\end{example}

Thus the inverse image clique can be thought of as an ``algebraic essential inverse image'' where ``algebraic'' refers to the fact that we specify the isomorphisms exhibiting objects to be in the essential inverse image. This will be a key construction enabling us to keep track of all the possible ways of interpreting configurations of points as tensor products, and to take into account when different ways have been invoked.

\begin{remark}
Note that while $F_!$ and $F^*$ are functors, we will only use their action on objects in this work. We will also not use the fact that $F^*$ is an equivalence, but have included this for completeness.
\end{remark}




\section{Labelled configuration spaces}
\label{three}


In this section we will set up the framework of labelled configuration spaces that we will use for our main construction.  This is analogous to the ``train track'' construction of \cite{jk1}.  As Joyal and Kock point out, we want to use configurations of points in 2-space, but with some rigidity to ensure that points can't just move around each other and thus commute.  In our case, we need just the right kind of texture to maintain weak vertical composition but allow horizontal composition to be strict. We start with configurations of points in a unit square, but this would \emph{a priori} be weak in both directions; we then invoke a clique construction to make horizontal composition strict.  

\subsection{Labelled configurations of points}

Where \cite{jk1} uses configurations of points in $\R^2$, we use configurations of points in the interior of $I^2$ where $I$ is the unit interval $[0,1]$.  These can then be stacked horizontally and vertically and reparametrised, to produce 0-composition and 1-composition.  This is weak in \emph{both} directions (but with strict interchange) and naturally forms a \dd\ Trimble-tricategory; we will study this in a future work. In the present work we will eventually use cliques to make horizontal composition strict.  



Given a set $\cO$, let $C_n(I^2, \cO)$ denote the space of configurations of $n$ distinct points in the interior of $I^2$, each labelled by an element of $\cO$.  For example:

\[\psset{unit=2mm}
\pspicture(-5,-5)(5,5)

\psframe(-5,-5)(5,5)

\rput(3,-1){
\pscircle*(0,0){0.2}
\rput(0,1){\scr $x_1$}
}

\rput(-3,2){
\pscircle*(0,0){0.2}
\rput(0,1){\scr $x_2$}
}

\rput(1,-4){
\pscircle*(0,0){0.2}
\rput(0,1){\scr $x_3$}
}

\rput(-2,-2){
\pscircle*(0,0){0.2}
\rput(0,1){\scr $x_4$}
}


\endpspicture\]
We are interested in the disjoint union
\[C(I^2, \cO) := \coprod_{n \geq 0} C_n(I^2, \cO).\]
Thus $C(I^2, \cO)$ is the space of functions $S \tra \cO$ where $S$ is a finite subset of $(0,1)^2$.  When $\cO=1$ (the singleton set), the points are effectively unlabelled so $C(I^2, \cO)$ is the standard space of configurations of points in a square, which we will write $C(I^2)$; its fundamental groupoid $\Pi_1\big(C(I^2)\big)$ is equivalent to the braid category (the free braided monoidal category on one object).  Similarly, $\Pi_1\big(C(I^2, \cO)\big)$ is equivalent to the free braided monoidal category on $\cO$, and we can put a braided monoidal structure on the former to make this into a braided monoidal equivalence.   As we will be relying on this structure we will now elucidate it further. 

\subsection{Braided monoidal structure}
\label{vertmonconfigs}

We will consider the following braided monoidal structure on $\Pi_1\big(C(I^2, \cO)\big)$.  The tensor product is given by ``stacking'' boxes vertically and scaling them linearly so that each box has equal height inside the resulting unit square. For example given the following objects $X$ and $Y$:

\[\psset{unit=2mm}
\pspicture(-5,-8)(25,5)

\rput(0,0){
\psframe(-5,-5)(5,5)
\rput(0,-7){$X$}

\rput(3,1){
\pscircle*(0,0){0.2}
\rput(0,1){\scr $x_1$}
}

\rput(-3,1){
\pscircle*(0,0){0.2}
\rput(0,1){\scr $x_2$}
}

\rput(1,-3){
\pscircle*(0,0){0.2}
\rput(0,1){\scr $x_3$}
}



}

\rput(20,0){

\psframe(-5,-5)(5,5)
\rput(0,-7){$Y$}

\rput(-1,-1){
\pscircle*(0,0){0.2}
\rput(0,1){\scr $y_1$}
}

\rput(3,-1){
\pscircle*(0,0){0.2}
\rput(0,1){\scr $y_2$}
}




}

\endpspicture\]
the tensor product $X \otimes Y$ is given by the following configuration (where the dotted line is just there to show where the two boxes were ``concatenated''):
\[\psset{unit=2mm}
\pspicture(-5,-5)(5,5)

\psframe(-5,-5)(5,5)
\psline[linecolor=gray!70!white,linestyle=dashed](-5,0)(5,0)

\rput(0,2.5){

\rput(3,0.5){
\pscircle*(0,0){0.2}
\rput(0,1){\scr $x_1$}
}

\rput(-3,0.5){
\pscircle*(0,0){0.2}
\rput(0,1){\scr $x_2$}
}

\rput(1,-1.5){
\pscircle*(0,0){0.2}
\rput(0,1){\scr $x_3$}
}

}

\rput(0,-2.5){

\rput(-1,-0.5){
\pscircle*(0,0){0.2}
\rput(0,1){\scr $y_1$}
}

\rput(3,-0.5){
\pscircle*(0,0){0.2}
\rput(0,1){\scr $y_2$}
}




}

\endpspicture\]

\noi Note that this tensor product is only weakly associative and unital, and the unit is the empty box.

\begin{remark}
We find that a formal expression for this is easy to write down but adds little, so we leave it to the reader.
\end{remark}

Morphisms in this category are braids.   So far we have a weak monoidal category.  A braiding for this monoidal category is given by braiding multiple strands past each other.  As in Section~\ref{one} we can express this via an Eckmann--Hilton type construction, and we just have to pick a consistent orientation. We will use the following orientation:

\[\psset{unit=0.7mm}
\pspicture(0,-10)(150,10)

\rput(0,0){\rnode{a1}{
\psframe(-8,-8)(8,8)

\rput(0,4){\scr $A$}

\rput(0,-4){{\scr $B$}}

\psline[linestyle=dashed, linecolor=gray!70!white](-8,0)(8,0)
}}


\rput(50,0){\rnode{a2}{\psframe(-8,-8)(8,8)

\rput(4,4){{\scr $A$}}

\rput(-4,-4){{\scr $B$}}

\psline[linestyle=dashed, linecolor=gray!70!white](-8,0)(8,0)
\psline[linestyle=dashed, linecolor=gray!70!white](0,-8)(0,8)}}


\rput(100,0){\rnode{a3}{\psframe(-8,-8)(8,8)
\rput(4,-4){{\scr $A$}}
%
\rput(-4,4){{\scr $B$}}
%
\psline[linestyle=dashed, linecolor=gray!70!white](-8,0)(8,0)
\psline[linestyle=dashed, linecolor=gray!70!white](0,-8)(0,8)
}}


\rput(150,0){\rnode{a4}{\psframe(-8,-8)(8,8)
\rput(0,-4){{\scr $A$}}
%
\rput(0,4){{\scr $B$}}
%
\psline[linestyle=dashed, linecolor=gray!70!white](-8,0)(8,0)
%
}}

\psset{nodesep=29pt, labelsep=4pt}

\ncline[arrows=->]{a1}{a2} \naput{\scr\sf \parbox{10em}{\bc weak horizontal units\\ strict interchange\ec}}
\ncline[arrows=->,nodesep=28pt]{a2}{a3} \naput[labelsep=18pt]{\scr\sf weak vertical units}
\ncline[arrows=->]{a3}{a4} \naput{\scr\sf \parbox{10em}{\bc weak horizontal units\\ strict interchange\ec}}

\endpspicture\]

\noi We will represent this in braid diagrams as follows:


\[\psset{unit=0.7mm}
\pspicture(0,-10)(135,10)

\rput(0,0){\rnode{a1}{
\psframe(-8,-8)(8,8)

\rput(0,4){\rnode{aa1}{\pscircle*(0,0){0.5}}}

\rput(0,-4){\rnode{bb1}{\pscircle*(0,0){0.5}}}

\psline[linestyle=dashed, linecolor=gray!70!white](-8,0)(8,0)
}}


\rput(45,0){\rnode{a2}{\psframe(-8,-8)(8,8)

\rput(4,4){\rnode{aa2}{\pscircle*(0,0){0.5}}}

\rput(-4,-4){\rnode{bb2}{\pscircle*(0,0){0.5}}}

\psline[linestyle=dashed, linecolor=gray!70!white](-8,0)(8,0)
\psline[linestyle=dashed, linecolor=gray!70!white](0,-8)(0,8)}}


\rput(90,0){\rnode{a3}{\psframe(-8,-8)(8,8)
\rput(4,-4){\rnode{aa3}{\pscircle*(0,0){0.5}}}
\rput(-4,4){\rnode{bb3}{\pscircle*(0,0){0.5}}}
\psline[linestyle=dashed, linecolor=gray!70!white](-8,0)(8,0)
\psline[linestyle=dashed, linecolor=gray!70!white](0,-8)(0,8)
}}


\rput(135,0){\rnode{a4}{\psframe(-8,-8)(8,8)
\rput(0,-4){\rnode{aa4}{\pscircle*(0,0){0.5}}}

\rput(0,4){\rnode{bb4}{\pscircle*(0,0){0.5}}}
\psline[linestyle=dashed, linecolor=gray!70!white](-8,0)(8,0)
%
}}

\psset{border=1pt}

\nccurve[angleA=0,angleB=180]{aa1}{aa2}
\nccurve[angleA=0,angleB=180]{aa2}{aa3}
\nccurve[angleA=0,angleB=180]{aa3}{aa4}

\nccurve[angleA=0,angleB=180]{bb1}{bb2}
\nccurve[angleA=0,angleB=180]{bb2}{bb3}
\nccurve[angleA=0,angleB=180]{bb3}{bb4}

%

\endpspicture\]

\noi The braiding therefore amounts to the following braid:


\[\psset{unit=0.8mm}
\pspicture(0,-10)(45,10)

\rput(0,0){\rnode{a1}{
\psframe(-8,-8)(8,8)

\rput(0,4){\rnode{aa1}{\pscircle*(0,0){0.5}}}

\rput(0,-4){\rnode{bb1}{\pscircle*(0,0){0.5}}}

\psline[linestyle=dashed, linecolor=gray!70!white](-8,0)(8,0)
}}


%
%
%
%
%
%

\rput(45,0){\rnode{a4}{\psframe(-8,-8)(8,8)
\rput(0,-4){\rnode{aa4}{\pscircle*(0,0){0.5}}}

\rput(0,4){\rnode{bb4}{\pscircle*(0,0){0.5}}}
\psline[linestyle=dashed, linecolor=gray!70!white](-8,0)(8,0)
%
}}

\psset{border=1pt}

\nccurve[angleA=0,angleB=180]{aa1}{aa4}

\nccurve[angleA=0,angleB=180]{bb1}{bb4}

%

\endpspicture\]

\noi Note that if we represented this more three-dimensionally it might look like the diagram below, but we will continue to use the ``flat'' presentation.


\[\psset{unit=0.8mm}
\pspicture(0,-10)(45,10)

\rput(0,0){\rnode{a1}{

\psline(-7.5,-10)(7.5,-5.5)(7.5,9.5)(-7.5,6)(-7.5,-10)

\rput(0,4){\rnode{aa1}{\pscircle*(0,0){0.5}}}

\rput(0,-4){\rnode{bb1}{\pscircle*(0,0){0.5}}}

\psline[linestyle=dashed, linecolor=gray!70!white](-8,-2)(8,2)
}}


%
%
%
%
%
%

\rput(45,0){\rnode{a4}{
%
\rput(0,-4){\rnode{aa4}{\pscircle*(0,0){0.5}}}

\rput(0,4){\rnode{bb4}{\pscircle*(0,0){0.5}}}
%

\nccurve[angleA=0,angleB=180]{aa1}{aa4}

\psset{border=1pt}

\nccurve[angleA=0,angleB=180]{bb1}{bb4}

\psline[linestyle=dashed, linecolor=gray!70!white](-8,-2)(8,2)

\psset{border=1pt}

\psline(-7.5,-10)(7.5,-5.5)(7.5,9.5)(-7.5,6)(-7.5,-10)

%
}}

\psset{border=1pt}


%

\endpspicture\]

\subsection{Equivalence with free braided monoidal category}
\label{equivfree}

As in \cite{jk1} we will need to embed the free braided monoidal category on a set $\cO$ into the above braided monoidal category of points labelled by $\cO$.  As we are dealing with braided monoidal categories whose monoidal structure is weak, we will take $\cF\cO$ to be the free weak \bmc\ on the object set $\cO$.  So the objects are parenthesised words in $\cO$ and a formal unit, and the morphisms are braids labelled by objects of $\cO$.  We will use the universal property of $\cF\cO$ to induce a braided monoidal functor
\[\cF\cO \mtmap{F} \Pi_1\big(C(I^2, \cO)\big)\]
so we just have to define a map from the set $\cO$ to the set of objects of $\Pi_1\big(C(I^2, \cO)\big)$.  We do this by sending each element $x \in \cO$ to the configuration of a single point in the centre of $I^2$, labelled by $x$.  We will refer to this as a ``singleton box'' which we can draw as:
\[\pspicture(-5,-5)(5,5)

\psframe(-5,-5)(5,5)
\pscircle*(0,0){0.5}
\rput(0,2){\scr $x$}

\endpspicture\]
Note that the induced functor $F$ is strictly monoidal.  

It may help to sketch more of the details:

\begin{itemize}

\item The empty word is mapped to the empty box, that is, the configuration of 0 points in $I^2$.

\item The word $(x_1, x_2)$ is the tensor product $x_1 \otimes x_2$ so must be mapped to the configuration obtained from two singleton boxes stacked vertically and reparametrised equally.  Thus we have the following, where, again,  the dotted line merely depicts where the two boxes were joined together.

\[\pspicture(-8,-8)(8,8)

\psline[linestyle=dashed, linecolor=gray!70!white](-8,0)(8,0)

\psframe(-8,-8)(8,8)
\pscircle*(0,4){0.5}
\rput(3,4){\scr $x_1$}

\pscircle*(0,-4){0.5}
\rput(3,-4){\scr $x_2$}

\endpspicture\]

\item If $X$ and $Y$ are (parenthesised) words, then $X \otimes Y$ must be mapped to $FX$ and $FY$ stacked vertically and reparametrised equally:

\[\pspicture(-8,-8)(8,8)

\psline[linestyle=dashed, linecolor=gray!70!white](-8,0)(8,0)

\psframe(-8,-8)(8,8)
%

\rput(0,4){\scr $FX$}
\rput(0,-4){\scr $FY$}

\endpspicture\]
\end{itemize}

\noi Thus all the configurations in the image of $F$ have all their points in a vertical line running down the center of the box.  (This is not sufficient to be in the image, but is necessary.)

On morphisms, $F$ sends a braid between words to the corresponding braid connecting the associated labelled configurations, according to the orientation for the braiding we fixed when defining the braided monoidal structure of $\Pi_1\big(C(I^2, \cO)\big)$.


\begin{proposition}
The functor $\cF\cO \mtmap{F} \Pi_1\big(C(I^2, \cO)\big)$ is a braided monoidal equivalence of braided monoidal categories.
\end{proposition}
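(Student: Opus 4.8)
The plan is to show that $F$ is a braided monoidal functor whose underlying functor is an equivalence of categories, i.e.\ that it is essentially surjective and fully faithful. Part of this is already in hand: $F$ is strict monoidal by construction, and it was defined on generating braids precisely according to the orientation convention fixed for the Eckmann--Hilton braiding of $\Pi_1\big(C(I^2,\cO)\big)$ in Section~\ref{vertmonconfigs}, which is the same convention governing the braiding of $\cF\cO$. So, using that $\cF\cO$ is \emph{freely} generated as a weak braided monoidal category by $\cO$, the assertion that $F$ is braided comes down to checking that a single elementary braiding $x \otimes y \to y \otimes x$ of $\cF\cO$ is carried to the geometric half-twist of two labelled points; this is exactly the comparison of the two Eckmann--Hilton pictures drawn in Sections~\ref{one} and \ref{vertmonconfigs}. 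Throughout I identify the interior of $I^2$ with $\R^2$, so that $C(I^2,\cO)$ is the usual configuration space of finitely many $\cO$-labelled points in the plane.

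For essential surjectivity I would argue geometrically. Write $C(I^2,\cO) = \coprod_{n \geq 0} C_n(I^2,\cO)$; since each $C_n(I^2)$ is connected and the label-forgetting map $C_n(I^2,\cO) \to C_n(I^2)$ is a covering, the connected components of $C_n(I^2,\cO)$ are indexed by the multisets of size $n$ drawn from $\cO$. Given an arbitrary labelled configuration, one can route finitely many pairwise-disjoint arcs --- there is ample room in the plane --- sliding all its points onto the central vertical line, ordered from top to bottom; the resulting path exhibits an isomorphism in $\Pi_1\big(C(I^2,\cO)\big)$ between the given configuration and $F(w)$, where $w$ is the word read off along that line. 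Hence every object is isomorphic to one in the image of $F$.

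Fullness and faithfulness are the substance. The topological input is the classical identification of the fundamental group of a configuration space of points in the plane with a braid group, together with its refinement to labelled points: the fundamental group of the component of $C_n(I^2,\cO)$ through $F(w)$ is the $\cO$-coloured braid group determined by $w$ (the subgroup of the $n$-strand braid group whose underlying permutation respects the colouring), with the standard Artin-type generators corresponding to half-twists of adjacent points. Hence $\Pi_1$ of each component is equivalent to the one-object groupoid on this group. On the other side, $\cF\cO$ being the free weak braided monoidal category on $\cO$, it is equivalent to the disjoint union, over words $w$, of one-object groupoids on exactly these coloured braid groups --- the standard combinatorial model of the free braided monoidal category, used in \cite{jk1}. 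One then checks that $F$ induces, on the piece indexed by each $w$, the evident identity isomorphism of groups, matching generators with half-twists; this uses only the definition of $F$ on generating braids. Being bijective on each hom-group and essentially surjective, $F$ is an equivalence. (Alternatively, since $\cF\cO$ is free, $F$ is the essentially unique braided monoidal functor sending each generator $x$ to the singleton box on $x$, so it suffices to know that some functor with that property is an equivalence --- which is the content of the description of $\Pi_1\big(C(I^2,\cO)\big)$ recalled in Section~\ref{three}.)

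The main obstacle is bookkeeping rather than mathematics. Two points deserve care. First, objects of $\cF\cO$ are \emph{parenthesised} words including a formal unit, while configurations carry no such data; this is harmless because we only claim an equivalence, so one may pass to the strictification of $\cF\cO$ (coherence for braided monoidal categories) or, equivalently, absorb its associativity and unit isomorphisms into the reparametrisation homotopies that supply the coherence constraints of $\Pi_1\big(C(I^2,\cO)\big)$. Second, and genuinely delicate, one must confirm that the orientation fixed in Section~\ref{vertmonconfigs} realises the \emph{standard} braiding of $\cF\cO$ rather than its inverse --- a concrete single-crossing verification. Apart from the classical computation of $\pi_1$ of configuration spaces, every step above is formal.
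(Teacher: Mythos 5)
Your proposal is correct and follows essentially the same route as the paper: full faithfulness via coherence for braided monoidal categories identifying morphisms of $\cF\cO$ with labelled braids and the classical identification of fundamental groupoids of configuration spaces with (coloured) braid groupoids, essential surjectivity by a path linearising any configuration onto the central vertical line, and braidedness of $F$ by construction. Your version simply spells out the topological bookkeeping (covering-space/Artin-generator details and the orientation check) that the paper's proof leaves implicit.
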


\begin{proof}
Coherence for braided monoidal categories tells us that for any words $X,Y$ in the objects of $\cO$, a morphism $X \tra Y$ in $\cF\cO$ is precisely given by a braid connecting the words, with consistent labels on the strands.  Such a braid also precisely defines a morphism $FX \tra FY$ in  $\Pi_1\big(C(I^2, \cO)\big)$ so $F$ is full and faithful.

For essential surjectivity note that given any object in $\Pi_1\big(C(I^2, \cO)\big)$, that is a labelled configuration of points, we can make a path (necessarily an isomorphism) to a vertical configuration, appropriately spaced to be in the image of $F$.

The functor $F$ is a braided monoidal functor by construction, thus it is a braided monoidal equivalence.
\end{proof}

%
%

\subsection{Horizontal slide maps}
\label{slidemaps}


The weak monoidal structure we are considering on $\Pi_1\big(C(I^2, \cO)\big)$ is in the vertical direction.  Eventually we also need a notion of horizontal tensor product, but we will need it to be strict.  To deal with this we introduce a notion of ``slide map'' to allow the points to ``slide'' horizontally freely.  We can picture this idea like an abacus (as compared with the train-tracks of \cite{jk1}): there are horizontal ``rails'' on which points can slide freely, but they cannot swap places on a rail, and they cannot move to a different vertical height.

There are two equivalent ways to make this construction: by an equivalence relation, or using cliques.  We will follow \cite{jk1} and use cliques; this approach enables an efficacious treatment of morphisms.

Write $C_n(I^2)$ for the configuration space of $n$ distinct points in the interior of $I^2$, and consider its fundamental groupoid, the category $\Pi_1\big(C_n(I^2)\big)$. So objects are configurations of $n$ points in the interior of $I^2$, and morphisms are homotopy classes of paths between them.  

\begin{definition}
We define a \emph{slide path} between points of $C_n(I^2)$ to be any path between configurations that keeps the $y$ coordinate of every point fixed.  We define a \emph{slide map} to be a homotopy class of a slide path; these are thus particular morphisms in the category $\Pi_1\big(C_n(I^2)\big)$. We say that two configurations are \emph{slide-equivalent} if they are isomorphic in this category via a slide map.  More generally for the labelled version a slide map in $\Pi_1\big(C(I^2, \cO)\big)$ is a map whose underlying unlabelled map is a slide map in $\Pi_1\big(C(I^2)\big)$.

\end{definition}

So points can ``slide'' sideways via a slide map but they cannot cross over each other (as that would require temporarily changing $y$ coordinate).  If two objects in $\Pi_1\big(C_n(I^2)\big)$ are isomorphic via a slide map then they are uniquely so, as all slide paths are homotopic to linear ones. So we can look at cliques where the connecting isomorphisms are given by these slide maps.  

\begin{definition}
We define the \emph{slide cliques} of the category $\Pi_1\big(C(I^2, \cO)\big)$ to be those cliques consisting of an entire slide-equivalence class of configurations, together with the unique slide maps between them.  
\end{definition}

As mentioned in Remark~\ref{repeatobjects} in this case objects of the clique are not repeated, so the indexing category can be suppressed.  Or, we can take $J$ to be a set of slide-equivalent unlabelled configurations of $n$ points in $I^2$, and then specifying a clique $X: \ol{J} \tra \Pi_1\big(C(I^2, \cO)\big)$ consists of picking $n$ objects of $\cO$ to use as labels for the points.  An object $X_j$ in the clique is then the configuration $j$ labelled by the chosen elements of $\cO$, and the connecting map $X_{ij}$ is the (unique) slide map connecting the configurations $i$ and $j$.  


\begin{proposition}
The (weak) vertical monoidal structure on ${\Pi_1\big(C(I^2, \cO)\big)}$ transfers to the slide cliques via $\wt{\otimes}$.
\end{proposition}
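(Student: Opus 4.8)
Writing $\cP := \Pi_1\big(C(I^2,\cO)\big)$, the plan is to show that the full subcategory of $\wt{\cP}$ whose objects are the slide cliques is closed under $\wt{\otimes}$ and contains the monoidal unit $\wt{I}$; once this is established, bifunctoriality of $\wt{\otimes}$ restricted to slide cliques, the associator and the unitors, and the pentagon and triangle identities all come for free, since between slide cliques every clique morphism is automatically a morphism of slide cliques.

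First I would prove that the tensor product of two slide cliques is again a slide clique. Let $X$ and $Y$ be slide cliques indexed by $J$ and $K$, where $J$ (respectively $K$) is a complete slide-equivalence class of $m$-point (respectively $n$-point) configurations. Recall that $X \otimes Y$ rescales the configurations underlying $X$ into the region $y > \tfrac12$ and those underlying $Y$ into the region $y < \tfrac12$, the rescaling acting on the $y$-coordinate by the affine maps $y \mapsto \tfrac{y+1}{2}$ and $y \mapsto \tfrac{y}{2}$. I would first observe that the connecting isomorphisms $(X \wt{\otimes} Y)_{(j,k)(p,q)} = X_{jp} \otimes Y_{kq}$ are slide maps: $X_{jp}$ and $Y_{kq}$ keep every $y$-coordinate constant, and an affine, hence monotone, rescaling of the $y$-axis carries a constant path to a constant path, so the vertically stacked path again fixes every $y$-coordinate. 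I would then check that $\{\,X_j \otimes Y_k : (j,k) \in J \times K\,\}$ is exactly one slide-equivalence class. Any two of its members are slide-equivalent: given $(j,k)$ and $(p,q)$ there are slide paths $j \rightsquigarrow p$ and $k \rightsquigarrow q$ (since $J$ and $K$ are slide classes), and stacking them yields, by the previous observation, a slide path $X_j \otimes Y_k \rightsquigarrow X_p \otimes Y_q$. Conversely, if $Z$ is slide-equivalent to some $X_j \otimes Y_k$ then, since a slide path fixes $y$-coordinates, no point of $Z$ can cross the line $y = \tfrac12$; thus $Z$ is the disjoint union of a sub-configuration lying in $y > \tfrac12$ and one lying in $y < \tfrac12$, and undoing the two rescalings identifies these, with labels carried along, with uniquely determined configurations in $J$ and $K$, whence $Z = X_{j'} \otimes Y_{k'}$. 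Hence $X \wt{\otimes} Y$ consists of a complete slide-equivalence class equipped with the unique slide maps between its members, i.e.\ it is a slide clique. The unit $\wt{I}$ is the singleton clique on the empty configuration, whose slide-equivalence class is a singleton, so $\wt{I}$ is a slide clique as well.

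The rest is formal. The slide cliques span a full subcategory of $\wt{\cP}$ which contains $\wt{I}$ and is closed under $\wt{\otimes}$ on objects; hence for morphisms $f, g$ of slide cliques the clique morphism $f \wt{\otimes} g$ goes between slide cliques and so is a morphism of slide cliques, bifunctoriality of the restriction is inherited, the components of the associator and of the left and right unitors are clique morphisms between slide cliques and hence morphisms of slide cliques (natural and invertible there), and the pentagon and triangle identities hold because they already hold in $\wt{\cP}$. This displays the slide cliques as a weak monoidal category under $\wt{\otimes}$ and $\wt{I}$ with the inclusion into $\wt{\cP}$ strictly monoidal, which is exactly the assertion that the vertical monoidal structure transfers.

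The only step requiring genuine care — the main obstacle, such as it is — is the object-level closure: one must verify that slide-equivalence interacts correctly with the vertical reparametrisation, i.e.\ that a point lying in the top box can never slide into the bottom box or vice versa. This is precisely the combination of the fact that slide paths fix $y$-coordinates with the fact that the reparametrisation used in the tensor product is monotone in $y$; everything else is bookkeeping.
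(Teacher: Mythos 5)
Your proposal is correct and follows essentially the same route as the paper: the whole content is the object-level check that $X\wt{\otimes}Y$ is exactly one slide-equivalence class, verified by observing that a configuration is slide-equivalent to some $X_j\otimes Y_k$ precisely when it splits into a top half slide-equivalent to $X_j$ and a bottom half slide-equivalent to $Y_k$ (the paper states this in one line; you spell out both inclusions and the fact that the stacked connecting maps are slide maps). The additional formal remarks about the unit, associators and coherence transferring to the full subcategory are fine and consistent with what the paper leaves implicit.
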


\begin{proof}
Consider slide cliques $X$ and $Y$. The clique $X \wt{\otimes} Y$ is defined componentwise so is not \emph{a priori} a slide clique. We need to check that it is in fact a slide clique. Recall that $X \wt{\otimes} Y$ is 
\[\{X_j \otimes Y_k \ | \ j \in J, k \in K \}\]
where $X$ and $Y$ are indexed by $J$ and $K$ respectively.  $X_j \otimes Y_k$ consists of the individual configurations, stacked vertically and reparametrised equally. So a configuration is slide equivalent to $X_j \otimes Y_k$ if and only if it can be expressed as two halves vertically, where the top half is slide equivalent to $X_j$ and the bottom half is slide equivalent to $Y_k$.  This is precisely the definition of $X \wt{\otimes}Y$. 
\end{proof}


We will use slide cliques to ensure that our horizontal tensor product is strict.  

\begin{proposition}
There is a strict monoidal structure on slide cliques produced by stacking configurations horizontally.
\end{proposition}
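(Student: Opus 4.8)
The plan is to transport the naive geometric operation of ``placing two configurations side by side'' through the slide-clique construction, where the weakness inherent in that operation gets absorbed. Explicitly, given slide cliques $X$ and $Y$, pick representative configurations $X_j$ and $Y_k$, rescale the $x$-coordinates so that $X_j$ occupies the left half $(0,1/2)\times(0,1)$ of the square and $Y_k$ the right half $(1/2,1)\times(0,1)$, and take the resulting configuration of all the points together. The key observation throughout is that any horizontal reparametrisation of a configuration is a slide path: it fixes every $y$-coordinate and, being monotone in the $x$-direction along each horizontal line, never moves one point past another. So whatever choices are made in the rescaling step, the results all lie in a single slide-equivalence class.

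First I would show the recipe defines a slide clique $X|Y$ independent of the chosen representatives. If $X_j$ and $X_{j'}$ lie in the slide class $X$, the unique slide path between them can be carried out inside the left half with $Y_k$ held fixed in the right half; this is a slide path from $X_j|Y_k$ to $X_{j'}|Y_k$, and symmetrically for varying $k$, so the slide class of $X_j|Y_k$ is well-defined. We take $X|Y$ to be that slide class, with its unique slide maps as connecting isomorphisms, exactly as for every slide clique. On morphisms, a clique map $X\to X'$ is pinned down by one component, a braid $X_j\to X'_{j'}$; its horizontal juxtaposition with a chosen component of a clique map $Y\to Y'$ is a braid $X_j|Y_k\to X'_{j'}|Y'_{k'}$, which specifies a clique map $X|Y\to X'|Y'$. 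That this is independent of the chosen components, and respects composition and identities, is routine once one notes that juxtaposing a slide map with an identity braid is again a slide map, hence only ever contributes a connecting isomorphism of $X|Y$.

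Strictness then follows immediately. The unit is the slide clique on the empty box; stacking a representative $X_j$ against the empty configuration and rescaling returns a horizontally rescaled copy of $X_j$, which is slide-equivalent to $X_j$, so $e|X = X = X|e$ with identity unit constraints. For associativity, $(X|Y)|Z$ and $X|(Y|Z)$ are by construction the slide classes of two triple stacks of representatives $X_j,Y_k,Z_l$ that differ only in where the two vertical dividing lines are placed, hence only by a horizontal reparametrisation, hence represent the same slide class. Since a slide clique is literally its slide class (together with the forced connecting maps), the two composites are equal as cliques, so the associativity constraint is an identity.

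The one place to be careful is precisely the assertion that ``differs by a reparametrisation'' is subsumed by slide-equivalence: one must check that remapping the $x$-coordinates by a monotone homeomorphism of $[0,1]$, applied simultaneously along every rail, is realised by an honest slide path (linear interpolation of positions works, since the left-to-right order along each horizontal line is preserved). As an alternative, cleaner packaging of the whole argument, one can observe that a slide clique is equivalently the combinatorial datum of a finite set of occupied heights in $(0,1)$ together with a nonempty word over $\cO$ at each occupied height; under this description $X|Y$ is simply ``concatenate the two words at each height'' (the empty word where a clique is unoccupied), and strict associativity and strict unitality of horizontal juxtaposition reduce to those of word concatenation.
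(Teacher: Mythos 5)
Your proof is correct and follows essentially the same route as the paper's: define $X|Y$ as the slide clique of a juxtaposition of representatives, and observe that reparenthesisation and unit insertion only change the configuration by a horizontal reparametrisation, hence land in the same slide clique, so the constraints are identity clique maps. You supply rather more detail than the paper (well-definedness, the action on morphisms, and the check that monotone reparametrisations are genuine slide paths), and your closing combinatorial description of slide cliques as words at each occupied height is a nice optional repackaging, but the underlying argument is the same.
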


\begin{proof}
There is a weak monoidal structure on the configurations, just as for the vertical structure, but with boxes stacked horizontally instead. We write this as $A | B$. By abuse of notation will write the horizontal tensor product of slide cliques the same way.  Given slide cliques $X$ and $Y$ we define $X|Y$ to be the slide clique of $X_i | Y_j$ for any $i$ and $j$.  For associativity note that $(X|Y)|Z$ is represented by $(X_i|Y_j)|Z_k$ and $X|(Y|Z)$ is represented by $X_i|(Y_j|Z_k)$ but that these are in the same slide clique, with connecting map given by the associator, thus the associator represents the identity map, exhibiting this tensor product as strictly associative.  The unit laws follow similarly.
\end{proof}

\begin{remark}
Note that this tensor product is not an induced tensor product on cliques. The tensor product on cliques induced from horizontal stacking does not produce the slide clique in question, because if we stack $X_i$ and $Y_j$ horizontally, the slide clique of the resulting configuration includes configurations that cannot be decomposed into two equal halves.
\end{remark}


We are now ready to use slide cliques to make our main construction.


\section{The main construction}
\label{mainconstruction}

We will now start with a \bmc\ $B$ and show how to define a doubly-degenerate \bicats-category $\Sigma B$ from it.  This construction is analogous to the construction given in \cite{jk1}, but with slide cliques instead of train tracks.  By Proposition~\ref{dd-char} we need to construct a category with two monoidal structures, one weak, one strict, satisfying strict interchange. We begin with the underlying category.

\subsection{The underlying category of $\Sigma B$}
\label{sigmacategory}

The idea of this construction is as follows.  First we take the objects to be configurations of points in $I^2$ labelled by objects of $B$, subject to slide-equivalence to ensure horizontal strictness.  We then want to ``pull back'' morphisms from $B$, so we want to interpret each configuration of points as a tensor product of objects of $B$, but there is no canonical way to do so, and no way that will interact coherently with both horizontal and vertical stacking of boxes.  If we just pick a way we will get weak interchange as an artefact.  Instead, we look at all possible ways of interpeting the configuration as a word, each equipped with a braid relating the original configuration to the word.  These will form a clique in $B$, and taking clique maps between them will deal with the coherence and absorb the weakness of the interchange.   

\begin{definition}
Let $B$ be a \bmc\ with object set $\cO$.  We define a category $\Sigma B$ as follows.  The objects are the slide cliques of $\Pi_1\big(C(I^2, \cO)\big)$.

For morphisms, first note that we have the following canonical functors, induced from the universal property of $\cF \cO$, where $F$ is as constructed in Section~\ref{equivfree}, and $G$ is induced by mapping each object of $\cO$ to itself in $B$:
\[\Pi_1\big(C(I^2, \cO)\big) \mtlmap{F} \cF\cO \mtmap{G} B.\]
We can then pass to cliques, and, since $F$ is an equivalence, we can produce the following composite functor (using the constructions of Section~\ref{inducedfunctors}), which we call $\theta$:
\[\theta := \hs{-0.3}\raisebox{0.1em}{\makebox[0.6em][l]{\scalebox{2.3}[1.3]{$\widetilde{\hs{3}}$}}}\Pi_1\big(C(I^2, \cO)\big) \mtmap{\hs{0.3}F^*} \wt{\cF\cO} \mtmap{G_!} \wt{B}.\]

\noi Now, objects of $\Sigma B$ are by definition certain objects of \hs{-0.3}$\raisebox{0.1em}{\makebox[0.6em][l]{\scalebox{2.3}[1.3]{$\widetilde{\hs{3}}$}}}\Pi_1\big(C(I^2, \cO)\big)$, so we can apply $\theta$ to them. So, given objects objects $X, Y \in \Sigma B$ we define 
\[\Sigma B(X, Y) := \wt{B}(\theta X, \theta Y).\]

\noi This completes the definition of the category $\Sigma B$. 

\end{definition}

\subsection{Unravelling the definition of $\Sigma B$}

It is worth unravelling this definition to get a better idea of the construction, which will in turn allow us to set up system of notation for calculating with the morphisms.

%

An object of $\Sigma B$ is a slide clique of configurations of points in the interior of $I^2$ labelled by elements of $\cO$, with connecting maps given by the slide maps. So two configurations are in the same clique if one can be obtained from the other by just ``sliding'' points horizontally without ever changing any vertical coordinates.  

For example if we start with the configuration $X$ below, then:

\begin{itemize}

\item $Y$ represents the same equivalence class.
\item $Z$ is not equivalent as the vertical coordinate of $x_3$ is different.

\item $K$ is not equivalent, as the vertical coordinate of $x_1$ or $x_2$ would have to change in order for them to move ``past'' each other.
\end{itemize}
\[\psset{unit=1.9mm}
\pspicture(-5,-8)(50,5)

\rput(0,0){
\psframe(-5,-5)(5,5)
\rput(0,-7){$X$}

\rput(0,2){
\pscircle*(0,0){0.2}
\rput(0,1){\scr $x_2$}
}

\rput(-3,2){
\pscircle*(0,0){0.2}
\rput(0,1){\scr $x_1$}
}

\rput(3,-3){
\pscircle*(0,0){0.2}
\rput(0,1){\scr $x_3$}
}

}

\rput(15,0){
\psframe(-5,-5)(5,5)
\rput(0,-7){$Y$}

\rput(3,2){
\pscircle*(0,0){0.2}
\rput(0,1){\scr $x_2$}
}

\rput(1,2){
\pscircle*(0,0){0.2}
\rput(0,1){\scr $x_1$}
}

\rput(-2,-3){
\pscircle*(0,0){0.2}
\rput(0,1){\scr $x_3$}
}

}

\rput(30,0){
\psframe(-5,-5)(5,5)
\rput(0,-7){$Z$}

\rput(0,2){
\pscircle*(0,0){0.2}
\rput(0,1){\scr $x_2$}
}

\rput(-3,2){
\pscircle*(0,0){0.2}
\rput(0,1){\scr $x_1$}
}

\rput(3,0){
\pscircle*(0,0){0.2}
\rput(0,1){\scr $x_3$}
}

}

\rput(45,0){
\psframe(-5,-5)(5,5)
\rput(0,-7){$K$}

\rput(-1,2){
\pscircle*(0,0){0.2}
\rput(0,1){\scr $x_2$}
}

\rput(2,2){
\pscircle*(0,0){0.2}
\rput(0,1){\scr $x_1$}
}

\rput(-2,-3){
\pscircle*(0,0){0.2}
\rput(0,1){\scr $x_3$}
}

}

\endpspicture\]

\noi To describe the morphisms of $\Sigma B$ we need to understand the functors $F^*$ and $G_!$.  We start with a slide clique ${X} \: \ol{J} \tra \Pi_1\big(C(I^2, \cO)\big)$. Then $F^*{X}$ is a clique in $\cF\cO$ indexed by the 2-fibred product $\cC \dtimes_{\hs{-0.2}\cD} \hs{0.1}\ol{J}$, where
\[\begin{array}{ccl}
\cC &=& \cF\cO \\
\cD &=& \Pi_1\big(C(I^2, \cO)\big)
\end{array}\]

\noi This 2-fibred product has objects of the form $(c, j, \gamma)$, where
\begin{itemize}
\item $c\in \cF\cO$, so is a parenthesised word in objects of $\cO$, for example $\big( b_1, ( b_2 , b_3)\big)$,

\item $j \in \ol{J}$, so is an unlabelled configuration of points in $I^2$, 

\item $\gamma\: X_j \tmap{\sim} Fc$ is an isomorphism in $\Pi_1\big(\cC(I^2, \cO)\big)$, which we will further elucidate below. 
\end{itemize}

\noi To understand what $\gamma$ is, note that $Fc$ consists of some points in a vertical line, labelled according to the objects making up $c$, and spaced according to its parenthesisation.  Then $\gamma$ is a braid connecting the labelled points in $I^2$ (the configuration $X_j$) to the labelled points on the vertical line (the configuration $Fc$), with labels matching along strands.  Following the analogy with the construction in \cite{jk1}, we will refer to this as a ``linearising braid'' as it ``linearises'' the configuration of points into a configuration all in a straight line.  

Here is an example:

\begin{itemize}

\item $c = \big( b_1 , (b_2 , b_3)\big)$

\item $j$ is this configuration of points 
\[\renewcommand{\nowdot}{\psset{unit=1mm}\pscircle*(0,0){0.4}}
\ps(16,16)

\rput(0,0){

\psframe(0,0)(16,16)

\rput(5,13){\rnode{b1}{\nowdot}
\rput(-2,0){}
}

\rput(7,6){\rnode{b2}{\nowdot}
\rput(0,-2){}
}
\rput(13,6){\rnode{b3}{\nowdot}
\rput(0,-2){}}

}
\eps
\]

\item $\gamma$ is the following linearising braid; the points in the configuration of $Fc$ are technically on a vertical line in the middle of a square, but we will depict them on an interval for emphasis:

\end{itemize}

\[\renewcommand{\nowdot}{\psset{unit=1mm}\pscircle*(0,0){0.4}}
\ps(0,34)(40,51)

\rput(0,35){

\psframe(0,0)(16,16)
\pnode(8,8){x0}

\rput(5,13){\rnode{b1}{\nowdot}
\rput(-2,0){\scr $b_1$}
}

\rput(7,6){\rnode{b2}{\nowdot}
\rput(0,-2){\scr $b_2$}
}
\rput(13,6){\rnode{b3}{\nowdot}
\rput(0,-2){\scr $b_3$}}

\rput(24,17){$\gamma$}

\rput(35,0){
\psline(0,0)(0,16)
\pnode(0,8){Fc0}
\rput(0,16){\psline(-1,0)(1,0)}
\rput(0,0){\psline(-1,0)(1,0)}

\rput(0,8){\psline[linecolor=gray!80!white](-1,0)(1,0)}
\rput(0,4){\psline[linecolor=gray!80!white](-1,0)(1,0)}

\rput(0,12){\rnode{c1}{\nowdot}
\rput[l](2,0){\scr $b_1$}}

\rput(0,6){\rnode{c2}{\nowdot}
\rput[l](2,0){\scr $b_2$}}

\rput(0,2){\rnode{c3}{\nowdot}
\rput[l](2,0){\scr $b_3$}}

\rput[B](9,17){$Fc$}

}

\psset{border=1pt}

\nccurve[angleA=10,angleB=180]{b1}{c1}
\nccurve[angleA=30,angleB=180]{b2}{c2}
\nccurve[angleA=-20,angleB=180]{b3}{c3}

\rput[B](-6,17){$X_j$}

}

\eps\]

Note that all the information of $(c, j, \gamma)$ is encapsulated in the diagram of the linearising braid, at least in theory, as the placement of the objects $b_i$ on the vertical line uniquely specifies their parenthesisation. 



A map $(c_0, j_0, \gamma_0)  \tra (c_1, j_1, \gamma_1)$ in the 2-fibred product is then a map $w\:c_0 \tra c_1$ in $\cF\cO$ making this diagram commute in $\Pi_1\big(C(I^2, \cO)\big)$:

\[
\psset{unit=0.1cm,labelsep=3pt,nodesep=3pt}
\pspicture(0,-3)(20,21)



\rput(20,18){\rnode{a1}{$Fc_0$}}  
\rput(0,18){\rnode{a2}{$X_{j_0}$}}  
\rput(20,0){\rnode{a3}{$Fc_1$}}  
\rput(0,0){\rnode{a4}{$X_{j_1}$}}  

\ncline{->}{a2}{a1} \naput{{\scriptsize $\gamma_0$}} 
\ncline{->}{a4}{a3} \nbput{{\scriptsize $\gamma_1$}} 
\ncline{->}{a1}{a3} \naput{{\scriptsize $Fw$}} 
\ncline{->}{a2}{a4} \nbput{{\scriptsize $X_{j_0j_1}$}} 

\endpspicture
\]
Now $w$ is a morphism of $\cF\cO$ so is an abstract braid and/or re-association from the word $c_0$ to the word $c_1$.  $Fw$ is this abstract braid/re-association realised as a braid from the points of $Fc_0$ to the points of $Fc_1$.  $X_{j_0j_1}$ is the connecting isomorphism between $X_{j_0}$ and $X_{j_1}$, that is, the unique slide map between them.  The diagram that needs to commute can be pictured as below. As three sides of the square are isomorphisms we see that there is only one possible braid $Fc_0 \tra Fc_1$ that will make the diagram commute, and so we refer to this as a ``mediating braid'', as it mediates between the two linearising braids.

\[\renewcommand{\nowdot}{\psset{unit=1mm}\pscircle*(0,0){0.4}}
\ps(0,-3)(40,60)

\rput(0,35){

\rput(23,20){$\gamma_0$}

\psframe(0,0)(16,16)
\pnode(8,8){x0}

\rput(5,13){\rnode{b1}{\nowdot}
\rput(-2,0){\scr $b_1$}
}

\rput(7,6){\rnode{b2}{\nowdot}
\rput(0,-2){\scr $b_2$}
}
\rput(13,6){\rnode{b3}{\nowdot}
\rput(0,-2){\scr $b_3$}}

\rput(35,0){
\psline(0,0)(0,16)
\pnode(0,8){Fc0}
\rput(0,16){\psline(-1,0)(1,0)}
\rput(0,0){\psline(-1,0)(1,0)}

\rput(0,8){\psline[linecolor=gray!80!white](-1,0)(1,0)}
\rput(0,4){\psline[linecolor=gray!80!white](-1,0)(1,0)}

\rput(0,12){\rnode{c1}{\nowdot}
\rput[l](2,0){\scr $b_1$}}

\rput(0,6){\rnode{c2}{\nowdot}
\rput[l](2,0){\scr $b_2$}}

\rput(0,2){\rnode{c3}{\nowdot}
\rput[l](2,0){\scr $b_3$}}

\rput(9,18){$Fc_0$}

}

\psset{border=1pt}

\nccurve[angleA=10,angleB=180]{b1}{c1}
\nccurve[angleA=30,angleB=180]{b2}{c2}
\nccurve[angleA=-20,angleB=180]{b3}{c3}

\rput(-6,18){$X_{j_0}$}

}


\rput(0,0){

\rput(23,-1){$\gamma_1$}

\psframe(0,0)(16,16)
\pnode(8,8){x1}

\rput(14,13){\rnode{b1}{\nowdot}
\rput(-2,0){\scr $b_1$}
}

\rput(3,6){\rnode{b2}{\nowdot}
\rput(0,-2){\scr $b_2$}
}
\rput(7,6){\rnode{b3}{\nowdot}
\rput(0,-2){\scr $b_3$}}

\rput(35,0){
\psline(0,0)(0,16)
\pnode(0,8){Fc1}
\rput(0,16){\psline(-1,0)(1,0)}
\rput(0,0){\psline(-1,0)(1,0)}

\rput(0,12){\psline[linecolor=gray!80!white](-1,0)(1,0)}
\rput(0,8){\psline[linecolor=gray!80!white](-1,0)(1,0)}

\rput(0,10){\rnode{c1}{\nowdot}
\rput[l](2,0){\scr $b_1$}
}

\rput(0,4){\rnode{c2}{\nowdot}
\rput[l](2,0){\scr $b_2$}}

\rput(0,14){\rnode{c3}{\nowdot}
\rput[l](2,0){\scr $b_3$}}

\rput(9,18){$Fc_1$}

}

\psset{border=1pt}

\nccurve[angleA=10,angleB=180]{b1}{c1}
\nccurve[angleA=30,angleB=180]{b2}{c2}
\nccurve[angleA=-20,angleB=180]{b3}{c3}

\rput(-6,18){$X_{j_1}$}

}



{\psset{nodesep=35pt, linecolor=gray!80!white}

\ncline[offset=8pt]{x0}{x1}
\ncline{x0}{x1}
\ncline[offset=-8pt]{x0}{x1}

\nbput[labelsep=8pt]{\scr slide}

\ncline[linewidth=3pt]{->}{Fc0}{Fc1}
\naput{\scr mediating braid}
}

\eps\]

\noi We can then depict the mediating braid as below, and we can see that the composite braid of $\gamma_0$ followed by the mediating braid is the braid $\gamma_1$:

\[\renewcommand{\nowdot}{\psset{unit=1mm}\pscircle*(0,0){0.4}}
\ps(0,30)(70,60)

\rput(0,35){

\rput(24,18){$\gamma_0$}

\psframe(0,0)(16,16)
\pnode(8,8){x0}

\rput(5,13){\rnode{b1}{\nowdot}
\rput(-2,0){\scr $b_1$}
}

\rput(7,6){\rnode{b2}{\nowdot}
\rput(0,-2){\scr $b_2$}
}
\rput(13,6){\rnode{b3}{\nowdot}
\rput(0,-2){\scr $b_3$}}

\rput(35,0){
\psline(0,0)(0,16)
\pnode(0,8){Fc0}
\rput(0,16){\psline(-1,0)(1,0)}
\rput(0,0){\psline(-1,0)(1,0)}

\rput(0,8){\psline[linecolor=gray!80!white](-1,0)(1,0)}
\rput(0,4){\psline[linecolor=gray!80!white](-1,0)(1,0)}

\rput(0,12){\rnode{c1}{\nowdot}
\rput[r](-1.4,2){\scr $b_1$}}

\rput(0,6){\rnode{c2}{\nowdot}
\rput[r](-1.4,2){\scr $b_2$}}

\rput(0,2){\rnode{c3}{\nowdot}
\rput[r](-1.4,-2){\scr $b_3$}}

\rput(0,20){$Fc_0$}

}

\psset{border=1pt}

\nccurve[angleA=10,angleB=180]{b1}{c1}
\nccurve[angleA=30,angleB=180]{b2}{c2}
\nccurve[angleA=-20,angleB=180]{b3}{c3}

\rput(8,20){$X_0$}

}

\rput(65,35){
\psline(0,0)(0,16)
\pnode(0,8){Fc1}
\rput(0,16){\psline(-1,0)(1,0)}
\rput(0,0){\psline(-1,0)(1,0)}

\rput(0,12){\psline[linecolor=gray!80!white](-1,0)(1,0)}
\rput(0,8){\psline[linecolor=gray!80!white](-1,0)(1,0)}

\rput(0,10){\rnode{d1}{\nowdot}
\rput[l](2,0){\scr $b_1$}
}

\rput(0,4){\rnode{d2}{\nowdot}
\rput[l](2,0){\scr $b_2$}}

\rput(0,14){\rnode{d3}{\nowdot}
\rput[l](2,0){\scr $b_3$}}

\rput(0,20){$Fc_1$}

}

\psset{border=1pt}

\nccurve[angleA=0,angleB=180]{c1}{d1}
\nccurve[angleA=0,angleB=180,ncurvA=1,ncurvB=0.8]{c2}{d2}
\nccurve[angleA=0,angleB=180,ncurvA=0.8,ncurvB=0.8]{c3}{d3}

\rput(50,52){\scr mediating braid}

\eps\]

\noi Furthermore, as $F$ is full and faithful we know that there is a unique morphism $w\:c_0 \tra c_1 \in \cF\cO$ whose image under $F$ is the mediating braid.  

This completes the description of the 2-fibred product indexing the clique $F^*{X}$.  The clique itself is then the clique in $\cF\cO$ produced from the projection from the 2-fibred product onto the $\cF\cO$ component, that is
\[\begin{array}{ccc}
\cF\cO \dtimes_{\hs{-0.2}\cD} \hs{0.1}\ol{J} & \mtra & \cF\cO \\
(c, j, \gamma) & \tmapsto & c
\end{array}\]

\noi So the objects of the clique are words $c$, but as each word appears multiple times indexed by different $(j, \gamma)$ we find it best to think of the word together with the configuration in $I^2$ and the linearising braid.

For example, consider the word $c = \big(b_1 , (b_2 , b_3)\big)$. Here are two distinct ways in which it arises in the clique $F^*X$, indexed by different configurations and linearising braids:

%

\[\renewcommand{\nowdot}{\psset{unit=1mm}\pscircle*(0,0){0.4}}
\ps(0,3)(40,55)

\rput(0,35){

\psframe(0,0)(16,16)
\pnode(8,8){x0}

\rput(5,13){\rnode{b1}{\nowdot}
\rput(-2,0){\scr $b_1$}
}

\rput(7,6){\rnode{b2}{\nowdot}
\rput(0,-2){\scr $b_2$}
}
\rput(13,6){\rnode{b3}{\nowdot}
\rput(0,-2){\scr $b_3$}}

\rput(35,0){
\psline(0,0)(0,16)
\pnode(0,8){Fc0}
\rput(0,16){\psline(-1,0)(1,0)}
\rput(0,0){\psline(-1,0)(1,0)}

\rput(0,8){\psline[linecolor=gray!80!white](-1,0)(1,0)}
\rput(0,4){\psline[linecolor=gray!80!white](-1,0)(1,0)}

\rput(0,12){\rnode{c1}{\nowdot}
\rput[l](2,0){\scr $b_1$}}

\rput(0,6){\rnode{c2}{\nowdot}
\rput[l](2,0){\scr $b_2$}}

\rput(0,2){\rnode{c3}{\nowdot}
\rput[l](2,0){\scr $b_3$}}

\rput(9,17){$Fc$}

}

\psset{border=1pt}

\nccurve[angleA=10,angleB=180]{b1}{c1}
\nccurve[angleA=30,angleB=180]{b2}{c2}
\nccurve[angleA=-20,angleB=180]{b3}{c3}

\rput(-6,17){$X_{j_0}$}
\rput(24,17){$\gamma_0$}

}


\rput(0,7){

\psframe(0,0)(16,16)
\pnode(8,8){x1}

\rput(14,13){\rnode{b1}{\nowdot}
\rput(-2,0){\scr $b_1$}
}

\rput(3,6){\rnode{b2}{\nowdot}
\rput(0,-2){\scr $b_2$}
}
\rput(7,6){\rnode{b3}{\nowdot}
\rput(0,-2){\scr $b_3$}}

\rput(35,0){
\psline(0,0)(0,16)
\pnode(0,8){Fc1}
\rput(0,16){\psline(-1,0)(1,0)}
\rput(0,0){\psline(-1,0)(1,0)}

\rput(0,8){\psline[linecolor=gray!80!white](-1,0)(1,0)}
\rput(0,4){\psline[linecolor=gray!80!white](-1,0)(1,0)}

\rput(0,12){\rnode{c1}{\nowdot}
\rput[l](2,0){\scr $b_1$}}

\rput(0,6){\rnode{c2}{\nowdot}
\rput[l](2,0){\scr $b_2$}}

\rput(0,2){\rnode{c3}{\nowdot}
\rput[l](2,0){\scr $b_3$}}

\rput(9,17){$Fc$}

}

\pnode(23,8){b11}
\pnode(10,8){b21}
\pnode(11,5){b31}

\pnode(25,5){b22}
\pnode(22,12){b32}

\psset{border=1pt}


\nccurve[angleA=30,angleB=180,nodesepB=0pt]{b2}{b21}
\nccurve[angleA=-20,angleB=180,nodesepB=0pt]{b3}{b31}

\nccurve[angleA=0,angleB=180,nodesep=0pt]{b21}{b22}
\nccurve[angleA=0,angleB=180,nodesep=0pt]{b31}{b32}

\nccurve[angleA=0,angleB=180,nodesepB=0pt]{b1}{b11}

\nccurve[angleA=0,angleB=180,nodesepA=0pt]{b11}{c1}
\nccurve[angleA=0,angleB=180,nodesepA=0pt]{b32}{c3}
\nccurve[angleA=0,angleB=190,nodesepA=0pt]{b22}{c2}

\rput(-6,17){$X_{j_1}$}
\rput(24,17){$\gamma_1$}

}



%
%
%
%

\eps\]

\noi The connecting isomorphisms in $F^*{X}$ are the mediating braids.  For the above example this is a non-trivial braid connecting $Fc$ with itself, mediating between the braids $\gamma_0$ and $\gamma_1$.

This completes our characterisation of the functor
\[\raisebox{0.1em}{\makebox[0.6em][l]{\scalebox{2.3}[1.3]{$\widetilde{\hs{3}}$}}}\Pi_1\big(C(I^2, \cO)\big) \mtmap{\hs{0.3}F^*} \wt{\cF\cO}.\]

\noi We then apply $G_!$, which takes the clique $F^* X$ and evaluates the objects and the connecting isomorphisms in $B$.  The words of $\cF\cO$ are evaluated in $B$ via the tensor product, so for example the word $\big(b_1, ( b_2, b_3) \big)$ is evaluated as 
\[b_1 \otimes (b_2 \otimes b_3).\]  
The connecting isomorphisms are all braids/re-associations in $\cF\cO$ so are evaluated via braiding and coherence constraints in $B$, as depicted below:

\[\renewcommand{\nowdot}{\psset{unit=1mm}\pscircle*(0,0){0.4}}
\ps(0,-2)(70,50)

\rput(0,35){

\psframe(0,0)(16,16)
\pnode(8,8){y1}

\rput(5,13){\rnode{b1}{\nowdot}
\rput(-2,0){\scr $b_1$}
}

\rput(7,6){\rnode{b2}{\nowdot}
\rput(0,-2){\scr $b_2$}
}
\rput(13,6){\rnode{b3}{\nowdot}
\rput(0,-2){\scr $b_3$}}

\rput(35,0){
\psline(0,0)(0,16)
\rput[B](20,8){\Rnode{xx}{$b_1 \otimes (b_2 \otimes b_3)$}}

\rput(0,16){\psline(-1,0)(1,0)}
\rput(0,0){\psline(-1,0)(1,0)}

\rput(0,8){\psline[linecolor=gray!80!white](-1,0)(1,0)}
\rput(0,4){\psline[linecolor=gray!80!white](-1,0)(1,0)}

\rput(0,12){\rnode{c1}{\nowdot}
\rput[l](2,0){\scr $b_1$}}

\rput(0,6){\rnode{c2}{\nowdot}
\rput[l](2,0){\scr $b_2$}}

\rput(0,2){\rnode{c3}{\nowdot}
\rput[l](2,0){\scr $b_3$}}


\pnode(0,8){z1}
}

\psset{border=1pt}

\nccurve[angleA=10,angleB=180]{b1}{c1}
\nccurve[angleA=30,angleB=180]{b2}{c2}
\nccurve[angleA=-20,angleB=180]{b3}{c3}


}


\rput(0,0){

\psframe(0,0)(16,16)
\pnode(8,8){y2}

\rput(14,13){\rnode{b1}{\nowdot}
\rput(-2,0){\scr $b_1$}
}

\rput(3,6){\rnode{b2}{\nowdot}
\rput(0,-2){\scr $b_2$}
}
\rput(7,6){\rnode{b3}{\nowdot}
\rput(0,-2){\scr $b_3$}}

\rput(35,0){
\psline(0,0)(0,16)
\rput[B](20,8){\Rnode{yy}{$(b_3 \otimes b_1) \otimes b_2$}}

\rput(0,16){\psline(-1,0)(1,0)}
\rput(0,0){\psline(-1,0)(1,0)}

\rput(0,12){\psline[linecolor=gray!80!white](-1,0)(1,0)}
\rput(0,8){\psline[linecolor=gray!80!white](-1,0)(1,0)}

\rput(0,10){\rnode{c1}{\nowdot}
\rput[l](2,0){\scr $b_1$}
}

\rput(0,4){\rnode{c2}{\nowdot}
\rput[l](2,0){\scr $b_2$}}

\rput(0,14){\rnode{c3}{\nowdot}
\rput[l](2,0){\scr $b_3$}}


\pnode(0,8){z2}
}

\psset{border=1pt}

\nccurve[angleA=10,angleB=180]{b1}{c1}
\nccurve[angleA=30,angleB=180]{b2}{c2}
\nccurve[angleA=-20,angleB=180]{b3}{c3}


}


\ncline[linewidth=2pt,nodesep=35pt]{->}{y1}{y2}
\nbput{\scr slide}

\ncline[linewidth=2pt,nodesep=35pt]{->}{z1}{z2}
\naput[npos=0.45]{\scr \parbox[l]{8em}{mediating\\ braid}}

\ncline[linewidth=1pt,nodesep=5pt, linestyle=dashed]{->}{xx}{yy}
\naput{\scr resulting coherence map in $B$}

\eps\]

\noi The evaluated words together with the evaluated connecting maps give the clique $\theta X$ in $B$.

%
%
%
%
%
%

We have been investigating the composite functor:
\[\theta := \hs{-0.3}\raisebox{0.1em}{\makebox[0.6em][l]{\scalebox{2.3}[1.3]{$\widetilde{\hs{3}}$}}}\Pi_1\big(C(I^2, \cO)\big) \mtmap{\hs{0.3}F^*} \wt{\cF\cO} \mtmap{G_!} \wt{B}\]
applied to a slide clique $X$, and we have finally arrived at the clique $\theta X$, which we are using to define morphisms in $\Sigma B$.  Recall that we defined
\[\Sigma B({X}, {Y}) = \wt{B}(\theta X, \theta Y)\]
\noi where ${X}$ and ${Y}$ are slide cliques of points in $I^2$ labelled by objects of $B$.   A morphism ${X} \tra {Y}$ in $\Sigma B$ is then a clique map in $\wt{B}$
\[\theta X \tra \theta Y\]
\noi thus it is represented by a morphism in $B$ between any chosen representatives of the cliques $\theta X$ and $\theta Y$. Picking a representative object of $\theta X$ consists of picking 

\begin{itemize}

\item one of the slide-equivalent configurations of $X$

\item a parenthesisation (and ordering) of the objects labelling $X$, and

\item a linearising braid.

\end{itemize}

\noi We do the same for $Y$, evaluate the parenthesisations in $B$, and take  any morphism $f$ between those resulting objects in $B$.  

The subtlety is that two representatives $f, f'$ are giving the same clique map if the square involving connecting isomorphisms commute.  To make this precise we need to introduce some more notation.  Write $\alpha(X)$ and $\alpha'(X)$ for the two parenthesisations of objects labelling $X$, with a connecting isomorphism that we will refer to as ``mediating braid'' although it involves a mediating braid as well as coherence constraints of $B$.  Similarly $\beta(Y), \beta'(Y)$.  Now suppose we have clique maps represented by the two maps below.
\[\ps(-10,0)(10,9)

\rput[B](-8,7){\Rnode{a1}{$\alpha(X)$}}
\rput[B](8,7){\Rnode{a2}{$\beta(Y)$}}
\rput[B](-8,0){\Rnode{b1}{$\alpha'(X)$}}
\rput[B](8,0){\Rnode{b2}{$\beta'(Y)$}}

\ncline{->}{a1}{a2} \naput{\scr $f$}
\ncline{->}{b1}{b2} \naput{\scr $f'$}

\eps\]
%
Then $f$ and $f'$ represent the same clique map $\theta X \tra \theta Y$ if the following square commutes
\[
\psset{unit=0.1cm,labelsep=3pt,nodesep=3pt}
\pspicture(0,-4)(20,22)


\rput(0,18){\rnode{a2}{$\alpha(X)$}}  
\rput(20,18){\rnode{a1}{$\beta(Y)$}}  
\rput(0,0){\rnode{a4}{$\alpha'(X)$}}  

\rput(20,0){\rnode{a3}{$\beta'(Y)$}}  

\ncline{->}{a2}{a1} \naput{{\scriptsize $f$}} 
\nbput[labelsep=2pt]{\scr $\sim$}
\ncline{->}{a4}{a3} \nbput{{\scriptsize $f'$}} 
\naput[labelsep=1pt]{\scr $\sim$}
\ncline{->}{a1}{a3} \naput{{\scriptsize \sf{mediating braid}}} 
\nbput{\rotatebox{90}{\scr $\sim$}}

\ncline{->}{a2}{a4} \nbput{{\scriptsize \sf{mediating braid}}}
\naput{\rotatebox{90}{\scr $\sim$}}
\endpspicture
\]

For example, the following are three representatives of the same clique $\theta X$, as the configurations of points only differ by a slide.  The maps in $B$ as shown are the connecting isomorphisms for the clique, and thus each one also represents the identity map on this clique.  Here $\sigma$ is the braiding in $B$, and we see that it can represent the identity map on this clique, where it is just ``compensating'' for a crossing in the linearising braid.


\[\renewcommand{\nowdot}{\psset{unit=1.1mm}\pscircle*(0,0){0.3}}
\psset{unit=0.8mm}
\ps(0,90)(60,159)


\rput(0,144){

\psframe(0,0)(16,16)
\pnode(8,8){x1}

\rput(8,12){\rnode{a1}{\nowdot}
\rput(-2,0){\scr $a$}
}

\rput(8,4){\rnode{b1}{\nowdot}
\rput(-2,0){\scr $b$}
}


\rput(35,0){
\psline(0,0)(0,16)
\rput(0,16){\psline(-1,0)(1,0)}
\rput(0,0){\psline(-1,0)(1,0)}

\rput(25,8){\rnode{z1}{$a \otimes b$}}

\rput(0,12){\rnode{a2}{\nowdot}
\rput[l](2,0){\scr $a$}}

\rput(0,4){\rnode{b2}{\nowdot}
\rput[l](2,0){\scr $b$}}
}

\psset{border=1pt}

\nccurve[angleA=10,angleB=180]{a1}{a2}
\nccurve[angleA=10,angleB=180]{b1}{b2}


}


\rput(0,117){

\psframe(0,0)(16,16)
\pnode(8,8){x2}

\rput(12,12){\rnode{a1}{\nowdot}
\rput(-2,0){\scr $a$}
}

\rput(4,4){\rnode{b1}{\nowdot}
\rput(-2,0){\scr $b$}
}


\rput(35,0){
\psline(0,0)(0,16)
\rput(0,16){\psline(-1,0)(1,0)}
\rput(0,0){\psline(-1,0)(1,0)}

\rput(25,8){\rnode{z2}{$a \otimes b$}}


\rput(0,12){\rnode{a2}{\nowdot}
\rput[l](2,0){\scr $a$}}

\rput(0,4){\rnode{b2}{\nowdot}
\rput[l](2,0){\scr $b$}}
}

\psset{border=1pt}

\nccurve[angleA=10,angleB=180]{a1}{a2}
\nccurve[angleA=10,angleB=180]{b1}{b2}


}


\rput(0,90){
\psframe(0,0)(16,16)
\pnode(8,8){x3}

\rput(12,12){\rnode{a1}{\nowdot}
\rput(-2,0){\scr $a$}
}

\rput(4,4){\rnode{b1}{\nowdot}
\rput(-2,0){\scr $b$}
}


\rput(35,0){
\psline(0,0)(0,16)
\rput(0,16){\psline(-1,0)(1,0)}
\rput(0,0){\psline(-1,0)(1,0)}

\rput(25,8){\rnode{z3}{$b \otimes a$}}


\rput(0,4){\rnode{a2}{\nowdot}
\rput[l](2,0){\scr $a$}}

\rput(0,12){\rnode{b2}{\nowdot}
\rput[l](2,0){\scr $b$}}
}

\psset{border=1pt}

\nccurve[angleA=10,angleB=180]{a1}{a2}
\nccurve[angleA=10,angleB=180]{b1}{b2}


}

\ncline[nodesep=10pt,offset=0pt]{->}{z1}{z2}\naput{\scr $1_{a \otimes b}$}
\ncline[nodesep=10pt,offset=0pt]{->}{z2}{z3}\naput{\scr $\sigma$}

\ncline[doubleline=true,nodesep=26pt]{x1}{x2}
\ncline[doubleline=true,nodesep=26pt]{x2}{x3} 

\eps\]

By contrast the following are not in the same clique as the configurations differ by vertical coordinates, not just horizontal ones.  Thus the identity $1_{b\otimes a}$ as shown does not represent an identity clique map, although it is an identity in $B$.


\[\renewcommand{\nowdot}{\psset{unit=1.1mm}\pscircle*(0,0){0.3}}
\psset{unit=0.8mm}
\ps(0,58)(60,100)

\rput(0,85){
\psframe(0,0)(16,16)
\pnode(8,8){x3}

\rput(12,12){\rnode{a1}{\nowdot}
\rput(-2,0){\scr $a$}
}

\rput(4,4){\rnode{b1}{\nowdot}
\rput(-2,0){\scr $b$}
}


\rput(35,0){
\psline(0,0)(0,16)
\rput(0,16){\psline(-1,0)(1,0)}
\rput(0,0){\psline(-1,0)(1,0)}

\rput(25,8){\rnode{z3}{$b \otimes a$}}


\rput(0,4){\rnode{a2}{\nowdot}
\rput[l](2,0){\scr $a$}}

\rput(0,12){\rnode{b2}{\nowdot}
\rput[l](2,0){\scr $b$}}
}

\psset{border=1pt}

\nccurve[angleA=10,angleB=180]{a1}{a2}
\nccurve[angleA=10,angleB=180]{b1}{b2}


}

\rput(0,60){

\psframe(0,0)(16,16)
\pnode(8,8){x4}

\rput(12,4){\rnode{a1}{\nowdot}
\rput(-2,0){\scr $a$}
}

\rput(4,12){\rnode{b1}{\nowdot}
\rput(-2,0){\scr $b$}
}


\rput(35,0){
\psline(0,0)(0,16)
\rput(0,16){\psline(-1,0)(1,0)}
\rput(0,0){\psline(-1,0)(1,0)}

\rput(25,8){\rnode{z4}{$b \otimes a$}}

\rput(0,4){\rnode{a2}{\nowdot}
\rput[l](2,0){\scr $a$}}

\rput(0,12){\rnode{b2}{\nowdot}
\rput[l](2,0){\scr $b$}}
}

\psset{border=1pt}

\nccurve[angleA=10,angleB=180]{a1}{a2}
\nccurve[angleA=10,angleB=180]{b1}{b2}


}

\ncline[nodesep=10pt,offset=0pt]{->}{z3}{z4}\naput{\scr $1_{b \otimes a}$}


\eps\]

\noi Both of these scenarios will be key later.  At this point it is evident that we need a consistent notation for calculating with these morphisms, which we will now establish.

\subsection{Notational conventions for $\Sigma B$}

In this section we will lay out our method and notation for working with $\Sigma B$.  We will write objects as below; in the following diagram we have drawn dashed horizontal lines to remind us that this is a slide clique, but most of the time we will not include those lines.

\[\renewcommand{\nowdot}{\psset{unit=1mm}\pscircle*(0,0){0.4}}
\ps(3,-1)(19,16)

\rput(0,0){

\psline[linestyle=dashed,linecolor=gray](0,13)(16,13)
\psline[linestyle=dashed,linecolor=gray](0,6)(16,6)

\psframe(0,0)(16,16)
\pnode(8,8){x0}

\rput(5,13){\rnode{b1}{\nowdot}
\rput(0,-2){\scr $a_1$}
}

\rput(7,6){\rnode{b2}{\nowdot}
\rput(0,-2){\scr $a_2$}
}
\rput(13,6){\rnode{b3}{\nowdot}
\rput(0,-2){\scr $a_3$}}

}

\eps
\]

\noi A morphism in $\Sigma B$ is a clique map, and we will depict a representative of it between particular objects as below; here for the source and target in $\Sigma B$ we depict the representing configuration $X$, the linearising braid $\gamma$, and the configuration $Fc$ together with its realisation as a parenthesised word in $B$.

\[\renewcommand{\nowdot}{\psset{unit=1mm}\pscircle*(0,0){0.4}}
\ps(0,-3)(60,50)

\rput(0,30){

\rput(24,17){\scr $\gamma$}

\psframe(0,0)(16,16)
\pnode(8,8){x0}

\rput(5,13){\rnode{b1}{\nowdot}
\rput(-2,0){\scr $a_1$}
}

\rput(7,6){\rnode{b2}{\nowdot}
\rput(0,-2){\scr $a_2$}
}
\rput(13,6){\rnode{b3}{\nowdot}
\rput(0,-2){\scr $a_3$}}

\rput(35,0){
\psline(0,0)(0,16)
\pnode(0,8){Fc0}
\rput(0,16){\psline(-1,0)(1,0)}
\rput(0,0){\psline(-1,0)(1,0)}

\rput(0,8){\psline[linecolor=gray!80!white](-1,0)(1,0)}
\rput(0,4){\psline[linecolor=gray!80!white](-1,0)(1,0)}

\rput(0,12){\rnode{c1}{\nowdot}
\rput[l](2,0){\scr $a_1$}}

\rput(0,6){\rnode{c2}{\nowdot}
\rput[l](2,0){\scr $a_2$}}

\rput(0,2){\rnode{c3}{\nowdot}
\rput[l](2,0){\scr $a_3$}}

\rput(9,18){$Fc$}

\rput(25,8){\rnode{za}{$a_1 \otimes (a_2 \otimes a_3)$}}

}

\psset{border=1pt}

\nccurve[angleA=10,angleB=180]{b1}{c1}
\nccurve[angleA=30,angleB=180]{b2}{c2}
\nccurve[angleA=-20,angleB=180]{b3}{c3}

\rput(-6,18){$X$}

}


\rput(0,0){

\rput(24,17){\scr $\gamma'$}

\psframe(0,0)(16,16)
\pnode(8,8){x3}

\rput(12,12){\rnode{a1}{\nowdot}
\rput(-2,0){\scr $b_1$}
}

\rput(4,4){\rnode{b1}{\nowdot}
\rput(-2,0){\scr $b_2$}
}


\rput(35,0){
\psline(0,0)(0,16)
\rput(0,16){\psline(-1,0)(1,0)}
\rput(0,0){\psline(-1,0)(1,0)}

\rput(25,8){\rnode{zb}{$b_2 \otimes b_1$}}


\rput(0,4){\rnode{a2}{\nowdot}
\rput[l](2,0){\scr $b_1$}}

\rput(9,18){$Fc'$}

\rput(0,12){\rnode{b2}{\nowdot}
\rput[l](2,0){\scr $b_2$}}
}

\psset{border=1pt}

\nccurve[angleA=10,angleB=180]{a1}{a2}
\nccurve[angleA=10,angleB=180]{b1}{b2}

\rput(-6,18){$X'$}


\ncline[nodesep=6pt]{->}{za}{zb} \naput{\scr $f$}

}



{\psset{nodesep=35pt, linecolor=gray!80!white}

\ncline[offset=8pt]{x0}{x1}
\ncline{x0}{x1}
\ncline[offset=-8pt]{x0}{x1}


}

\eps\]


We will tend to suppress the distinction between different members of a slide clique, as those play less role in the ensuing calculations on morphisms of $\Sigma B$; their main role is to ensure that the horizontal tensor product is strict on objects. 

When less specificity is needed we will represent general configurations and parenthesisations as shown below. Here $X$ represents a configuration of points labelled by objects of $B$, and $\langle X \rangle$  represents a particular parenthesisation of the objects in question. Where we need to refer to two different parenthesisations of the same objects, we will use more specific notation such as $\alpha_1(X)$, $\alpha_2(X)$. The single (thicker) strand represents a chosen linearising braid; this notation will later allow us to indicate a crossing of whole braids (as in \cite{jk1}):

\[\renewcommand{\nowdot}{\psset{unit=1mm}\pscircle*(0,0){0.4}}
\psset{unit=0.7mm}
\ps(0,0)(40,18)

\rput(0,0){

\psframe(0,0)(16,16)

\rput(8,8){\rnode{a1}{}
\rput(-2,0){\scr $X$}
}

\rput(35,0){
\psline(0,0)(0,16)
\rput(0,16){\psline(-1,0)(1,0)}
\rput(0,0){\psline(-1,0)(1,0)}

\rput(0,8){\rnode{a2}{}
\rput[l](2,0){\scr $\langle X \rangle$}}

}

\psset{border=1pt}

\nccurve[angleA=10,angleB=180, linecolor=gray, linewidth=1.5pt]{a1}{a2}

}

\eps\]


\noi A morphism will then be depicted via a particular representative of the clique map as follows:

\[\renewcommand{\nowdot}{\psset{unit=1mm}\pscircle*(0,0){0.4}}
\psset{unit=0.7mm}
\ps(0,0)(40,40)

\rput(0,26){

\psframe(0,0)(16,16)

\rput(8,8){\rnode{a1}{}
\rput(-2,0){\scr $X$}
}

\rput(35,0){
\psline(0,0)(0,16)
\rput(0,16){\psline(-1,0)(1,0)}
\rput(0,0){\psline(-1,0)(1,0)}

\rput(0,8){\rnode{aa2}{}
\rput[l](2,0){\scr $\langle X \rangle$}}

}

\psset{border=1pt}

\nccurve[angleA=10,angleB=180,linecolor=gray, linewidth=1.5pt]{a1}{aa2}

}


\rput(0,0){

\psframe(0,0)(16,16)

\rput(8,8){\rnode{a1}{}
\rput(-2,0){\scr $Y$}
}

\rput(35,0){
\psline(0,0)(0,16)
\rput(0,16){\psline(-1,0)(1,0)}
\rput(0,0){\psline(-1,0)(1,0)}

\rput(0,8){\rnode{ba2}{}
\rput[l](2,0){\scr $\langle Y \rangle$}}

}

\psset{border=1pt}

\nccurve[angleA=10,angleB=180,linecolor=gray, linewidth=1.5pt]{a1}{ba2}

}

\ncline[nodesep=8pt,offset=10pt]{->}{aa2}{ba2}\naput{\scr $f$}

%
%


\eps\]

When checking that two morphisms are the same we may need to take into account that different representing objects of the cliques in question have been used.  For example we may want to check that the following are two representatives of the same morphism:

\[\renewcommand{\nowdot}{\psset{unit=1mm}\pscircle*(0,0){0.4}}
\psset{unit=0.7mm}
\ps(0,-1)(120,42)



\rput(0,0){

\rput(0,26){

\psframe(0,0)(16,16)

\rput(8,8){\rnode{a1}{}
\rput(-2,0){\scr $X$}
}

\rput(35,0){
\psline(0,0)(0,16)
\rput(0,16){\psline(-1,0)(1,0)}
\rput(0,0){\psline(-1,0)(1,0)}

\rput(0,8){\rnode{aa2}{}
\rput[l](2,0){\scr $\alpha_1(X)$}}

}

\psset{border=1pt}

\nccurve[angleA=10,angleB=180,linecolor=gray, linewidth=1.5pt]{a1}{aa2}

}


\rput(0,0){

\psframe(0,0)(16,16)

\rput(8,8){\rnode{a1}{}
\rput(-2,0){\scr $Y$}
}

\rput(35,0){
\psline(0,0)(0,16)
\rput(0,16){\psline(-1,0)(1,0)}
\rput(0,0){\psline(-1,0)(1,0)}

\rput(0,8){\rnode{ba2}{}
\rput[l](2,0){\scr $\beta_1(Y)$}}

}

\psset{border=1pt}

\nccurve[angleA=10,angleB=180,linecolor=gray, linewidth=1.5pt]{a1}{ba2}

}

\ncline[nodesep=8pt,offset=10pt]{->}{aa2}{ba2}\naput{\scr $f_1$}

}



\rput(80,0){

\rput(0,26){

\psframe(0,0)(16,16)

\rput(8,8){\rnode{a1}{}
\rput(-2,0){\scr $X$}
}

\rput(35,0){
\psline(0,0)(0,16)
\rput(0,16){\psline(-1,0)(1,0)}
\rput(0,0){\psline(-1,0)(1,0)}

\rput(0,8){\rnode{aa2}{}
\rput[l](2,0){\scr $\alpha_2(X)$}}

}

\psset{border=1pt}

\nccurve[angleA=10,angleB=180,linecolor=gray, linewidth=1.5pt]{a1}{aa2}

}


\rput(0,0){

\psframe(0,0)(16,16)

\rput(8,8){\rnode{a1}{}
\rput(-2,0){\scr $Y$}
}

\rput(35,0){
\psline(0,0)(0,16)
\rput(0,16){\psline(-1,0)(1,0)}
\rput(0,0){\psline(-1,0)(1,0)}

\rput(0,8){\rnode{ba2}{}
\rput[l](2,0){\scr $\beta_2(Y)$}}

}

\psset{border=1pt}

\nccurve[angleA=10,angleB=180,linecolor=gray, linewidth=1.5pt]{a1}{ba2}

}

\ncline[nodesep=8pt,offset=10pt]{->}{aa2}{ba2}\naput{\scr $f_2$}

}

\eps\]

\noi To check this, we need to check that the following diagram commutes in $B$, where the top and bottom morphisms are the unique connecting isomorphisms induced from the respective linearising braids:

\[
\psset{unit=0.1cm,labelsep=3pt,nodesep=3pt}
\pspicture(-10,-4)(10,24)



\rput(-13,20){\rnode{a1}{$\alpha_1(X)$}}  
\rput(13,20){\rnode{a2}{$\alpha_2(X)$}}  
\rput(-13,0){\rnode{a3}{$\beta_1(Y)$}}  
\rput(13,0){\rnode{a4}{$\beta_2(Y)$}}  

\ncline{->}{a1}{a2} \naput{{\scriptsize \sf{coherence}}} 
\ncline{->}{a3}{a4} \nbput{{\scriptsize \sf{coherence}}} 
\ncline{->}{a1}{a3} \nbput{{\scriptsize $f_1$}} 
\ncline{->}{a2}{a4} \naput{{\scriptsize $f_2$}} 

\endpspicture
\]

\noi In practice where there is no ambiguity we will abuse notation and write a morphism $f$ in $\Sigma B$ and a chosen representative of it in the same way.

\subsection{Tensor products on $\Sigma B$}
\label{sigmatensor}

We now show that $\Sigma B$ has the structure of a doubly-degenerate \bicats-category. By Proposition~\ref{dd-char} we need to exhibit a strict ``horizontal'' tensor product, a weak ``vertical'' tensor product, and strict interchange.


We define the tensor products on objects of $\Sigma B$ as follows. 

\begin{itemize}

\item Weak vertical tensor product is given by vertical stacking of boxes, and then equal reparametrisation, as described in Section~\ref{vertmonconfigs}.

\item Strict horizontal tensor product is given by horizontal stacking of boxes as described in Section~\ref{slidemaps}; reparametrisation then doesn't matter as it is absorbed into the slide cliques.

\item The unit for both tensor products is the empty box.


\end{itemize}

\noi Now on morphisms the idea is very similar to \cite{jk1}: we set both the vertical and horizontal tensor products of morphisms to be the tensor product in $\wt{B}$.  We will now make this more precise.

Consider morphisms in $\Sigma B$
\[X_1 \mtmap{f_1} Y_1\]
\[X_2 \mtmap{f_2} Y_2\]
We will define the tensor products by picking representatives of $f_1$ and $f_2$ and using them to specify a representative of the clique map that is the tensor product. So consider any representatives

\[\renewcommand{\nowdot}{\psset{unit=1mm}\pscircle*(0,0){0.4}}
\psset{unit=0.7mm}
\ps(0,0)(120,50)



\rput(0,0){

\rput(0,30){

\psframe(0,0)(16,16)

\rput(8,8){\rnode{a1}{}
\rput(-2,0){\scr $X_1$}
}

\rput(35,0){
\psline(0,0)(0,16)
\rput(0,16){\psline(-1,0)(1,0)}
\rput(0,0){\psline(-1,0)(1,0)}

\rput(0,8){\rnode{aa2}{}
\rput[l](2,0){\scr $\langle X_1 \rangle$}}

}

\psset{border=1pt}

\nccurve[angleA=10,angleB=180,linecolor=gray, linewidth=1.5pt]{a1}{aa2}

}


\rput(0,0){

\psframe(0,0)(16,16)

\rput(8,8){\rnode{a1}{}
\rput(-2,0){\scr $Y_1$}
}

\rput(35,0){
\psline(0,0)(0,16)
\rput(0,16){\psline(-1,0)(1,0)}
\rput(0,0){\psline(-1,0)(1,0)}

\rput(0,8){\rnode{ba2}{}
\rput[l](2,0){\scr $\langle Y_1 \rangle$}}

}

\psset{border=1pt}

\nccurve[angleA=10,angleB=180,linecolor=gray, linewidth=1.5pt]{a1}{ba2}

}

\ncline[nodesep=8pt,offset=11pt]{->}{aa2}{ba2}\naput{\scr $f_1$}

}



\rput(80,0){

\rput(0,30){

\psframe(0,0)(16,16)

\rput(8,8){\rnode{a1}{}
\rput(-2,0){\scr $X_2$}
}

\rput(35,0){
\psline(0,0)(0,16)
\rput(0,16){\psline(-1,0)(1,0)}
\rput(0,0){\psline(-1,0)(1,0)}

\rput(0,8){\rnode{aa2}{}
\rput[l](2,0){\scr $\langle X_2 \rangle$}}

}

\psset{border=1pt}

\nccurve[angleA=10,angleB=180,linecolor=gray, linewidth=1.5pt]{a1}{aa2}

}


\rput(0,0){

\psframe(0,0)(16,16)

\rput(8,8){\rnode{a1}{}
\rput(-2,0){\scr $Y_2$}
}

\rput(35,0){
\psline(0,0)(0,16)
\rput(0,16){\psline(-1,0)(1,0)}
\rput(0,0){\psline(-1,0)(1,0)}

\rput(0,8){\rnode{ba2}{}
\rput[l](2,0){\scr $\langle Y_2 \rangle$}}

}

\psset{border=1pt}

\nccurve[angleA=10,angleB=180,linecolor=gray, linewidth=1.5pt]{a1}{ba2}

}

\ncline[nodesep=8pt,offset=11pt]{->}{aa2}{ba2}\naput{\scr $f_2$}

}

\eps\]

\noi \noi First we define the vertical tensor product
\[\frac{X_1}{X_2} \mtmap{\frac{f_1}{f_2}} \frac{Y_1}{Y_2}\]

\noi to be the map represented by $f_1 \otimes f_2$ with the following linearising braids:

\[\renewcommand{\nowdot}{\psset{unit=1mm}\pscircle*(0,0){0.4}}
\psset{unit=0.7mm}
\ps(0,0)(60,48)

\rput(0,30){

\psline[linestyle=dashed, linecolor=gray](0,8)(16,8)
\psframe(0,0)(16,16)

\rput(8,12){\rnode{a1}{}
\rput(-2,0){\scr $X_1$}
}

\rput(8,4){\rnode{b1}{}
\rput(-2,0){\scr $X_2$}
}

\rput(35,0){
\psline(0,0)(0,16)
\rput(0,16){\psline(-1,0)(1,0)}
\rput(0,0){\psline(-1,0)(1,0)}
\rput(0,8){\psline[linecolor=gray!80!white](-1,0)(1,0)}

\rput(0,12){\rnode{aa2}{}
}

\rput(0,4){\rnode{bb2}{}
}

\rput(15,8){\rnode{aaa2}{\scr $\langle X_1 \rangle \otimes \langle X_2 \rangle$}
}

}

\psset{border=1pt}

\nccurve[angleA=10,angleB=180,linecolor=gray, linewidth=1.5pt]{a1}{aa2}
\nccurve[angleA=10,angleB=180,linecolor=gray, linewidth=1.5pt]{b1}{bb2}

}


\rput(0,0){

\psline[linestyle=dashed, linecolor=gray](0,8)(16,8)
\psframe(0,0)(16,16)

\rput(8,12){\rnode{a1}{}
\rput(-2,0){\scr $Y_1$}
}

\rput(8,4){\rnode{b1}{}
\rput(-2,0){\scr $Y_2$}
}

\rput(35,0){
\psline(0,0)(0,16)
\rput(0,16){\psline(-1,0)(1,0)}
\rput(0,0){\psline(-1,0)(1,0)}
\rput(0,8){\psline[linecolor=gray!80!white](-1,0)(1,0)}

\rput(0,12){\rnode{aa2}{}
}

\rput(0,4){\rnode{bb2}{}
}

\rput(15,8){\rnode{bbb2}{\scr $\langle Y_1 \rangle \otimes \langle Y_2 \rangle$}
}

}

\psset{border=1pt}

\nccurve[angleA=10,angleB=180,linecolor=gray, linewidth=1.5pt]{a1}{aa2}
\nccurve[angleA=10,angleB=180,linecolor=gray, linewidth=1.5pt]{b1}{bb2}

}

\ncline[nodesep=8pt,offset=0pt]{->}{aaa2}{bbb2}\naput{\scr $f_1 \otimes f_2$}

\eps\]

\noi This is well-defined: if we start with different representatives of $f_1$ and $f_2$ the resulting tensor product gives the same clique map as the square we need to check is a tensor product of individual squares for the clique maps $f_1$ and $f_2$.

Next we define the horizontal tensor product
\[{X_1}|{X_2} \mtmap{{f_1}|{f_2}} {Y_1}|{Y_2}\]

\noi to be the map represented by $f_1 \otimes f_2$, now with respect to the following linearising braids:

\[\renewcommand{\nowdot}{\psset{unit=1mm}\pscircle*(0,0){0.4}}
\psset{unit=0.7mm}
\ps(0,0)(60,46)

\rput(0,30){

\psline[linestyle=dashed, linecolor=gray](8,0)(8,16)
\psframe(0,0)(16,16)

\rput(4,8){\rnode{a1}{}
\rput(0,-2){\scr $X_1$}
}

\rput(12,8){\rnode{b1}{}
\rput(0,-2){\scr $X_2$}
}

\rput(35,0){
\psline(0,0)(0,16)
\rput(0,16){\psline(-1,0)(1,0)}
\rput(0,0){\psline(-1,0)(1,0)}
\rput(0,8){\psline[linecolor=gray!80!white](-1,0)(1,0)}

\rput(0,12){\rnode{aa2}{}
}

\rput(0,4){\rnode{bb2}{}
}

\rput(15,8){\rnode{aaa2}{\scr $\langle X_1 \rangle \otimes \langle X_2 \rangle$}
}

}

\psset{border=1pt}

\nccurve[angleA=80,angleB=180,linecolor=gray, linewidth=1.5pt]{a1}{aa2}
\nccurve[angleA=30,angleB=180,linecolor=gray, linewidth=1.5pt]{b1}{bb2}

}


\rput(0,0){

\psline[linestyle=dashed, linecolor=gray](8,0)(8,16)
\psframe(0,0)(16,16)

\rput(4,8){\rnode{a1}{}
\rput(0,-2){\scr $Y_1$}
}

\rput(12,8){\rnode{b1}{}
\rput(0,-2){\scr $Y_2$}
}

\rput(35,0){
\psline(0,0)(0,16)
\rput(0,16){\psline(-1,0)(1,0)}
\rput(0,0){\psline(-1,0)(1,0)}
\rput(0,8){\psline[linecolor=gray!80!white](-1,0)(1,0)}

\rput(0,12){\rnode{aa2}{}
}

\rput(0,4){\rnode{bb2}{}
}

\rput(15,8){\rnode{bbb2}{\scr $\langle Y_1 \rangle \otimes \langle Y_2 \rangle$}
}

}

\psset{border=1pt}

\nccurve[angleA=80,angleB=180,linecolor=gray, linewidth=1.5pt]{a1}{aa2}
\nccurve[angleA=30,angleB=180,linecolor=gray, linewidth=1.5pt]{b1}{bb2}

}

\ncline[nodesep=8pt,offset=0pt]{->}{aaa2}{bbb2}\naput{\scr $f_1 \otimes f_2$}

\eps\]

\noi Note that these linearising braids can be expressed formally in terms of units and interchange: the original linearising braids are stacked horizontally and then post-composed with the following maps.

\[\psset{unit=0.8mm}
\pspicture(0,-10)(90,12)

\rput(-8,0){\rnode{a1}{

\psline[linestyle=dashed, linecolor=gray!70!white](0,-8)(0,8)
\psframe(-8,-8)(8,8)

\rput(-4,0){\rnode{aa1}{}
\rput(0,0){\scr $X_1$}
}

\rput(4,0){\rnode{bb1}{}
\rput(0,0){\scr $X_2$}}

}}


\rput(45,0){\rnode{a2}{\psframe(-8,-8)(8,8)

\rput(-3.4,4){\rnode{aa2}{\scr $X_1$}
}

\rput(4.2,-4){\rnode{bb2}{\scr $X_2$}
}

\psline[linestyle=dashed, linecolor=gray!70!white](-8,0)(8,0)
\psline[linestyle=dashed, linecolor=gray!70!white](0,-8)(0,8)}}




\rput(98,0){\rnode{a3}{\psframe(-8,-8)(8,8)
\rput(0,4){\rnode{aa3}{}
\rput(0,0){\scr $X_1$}
}
\rput(0,-4){\rnode{bb3}{}
\rput(0,0){\scr $X_2$}}
\psline[linestyle=dashed, linecolor=gray!70!white](-8,0)(8,0)
%
}}

\psset{nodesep=29pt, labelsep=4pt}

\ncline[arrows=->]{a1}{a2} \naput[labelsep=-2pt]{\scr\sf \parbox{10em}{\bc weak vertical units\\ strict interchange\ec}}
\ncline[arrows=->,nodesep=28pt]{a2}{a3} \naput[labelsep=10pt]{\scr\sf strict horizontal units}

\psset{border=1pt, linecolor=gray, linewidth=1.5pt, nodesep=2pt}

%
%

\endpspicture\]

\noi This is well-defined for the same reason as the vertical tensor product.

We now need to check associativity and unit constraints. For the weak vertical tensor product everything is immediate as the constraints come from those in $B$.  For the horizontal tensor product the constraints also come from those in $B$ but we now need the tensor product to be strict; this follows because those constraints in $B$ actually represent identity clique maps.  In more detail, first note that for objects $X, Y, Z$ in $\Sigma B$, the objects
$(X|Y)|Z$ and $X|(Y|Z)$ are in the same slide clique, as the different parenthesisation just means that each labelled point is repositioned horizontally, as indicated in the following diagram:

\[\renewcommand{\nowdot}{\psset{unit=1mm}\pscircle*(0,0){0.4}}
\ps(0,24)(50,42)

\rput(0,25){

\psframe(0,0)(16,16)
\rput(4,0){\psline[linestyle=dashed, linecolor=gray!80!white](0,0)(0,16)}
\rput(8,0){\psline[linestyle=dashed, linecolor=gray!80!white](0,0)(0,16)}

\rput(2, 8){$X$}
\rput(6, 8){$Y$}
\rput(12, 8){$Z$}

}

\rput(40,25){

\psframe(0,0)(16,16)
\rput(8,0){\psline[linestyle=dashed, linecolor=gray!80!white](0,0)(0,16)}
\rput(12,0){\psline[linestyle=dashed, linecolor=gray!80!white](0,0)(0,16)}

\rput(4, 8){$X$}
\rput(10, 8){$Y$}
\rput(14, 8){$Z$}

}

\eps\]

Now consider morphisms
\[\ps(-10,-8)(10,8)

\rput[B](-8,6){\Rnode{a1}{$X_1$}}
\rput[B](8,6){\Rnode{a2}{$X_2$}}
\rput[B](-8,0){\Rnode{b1}{$Y_1$}}
\rput[B](8,0){\Rnode{b2}{$Y_2$}}
\rput[B](-8,-6){\Rnode{c1}{$Z_1$}}
\rput[B](8,-6){\Rnode{c2}{$Z_2$}}

\ncline{->}{a1}{a2} \naput{\scr $f$}
\ncline{->}{b1}{b2} \naput{\scr $g$}
\ncline{->}{c1}{c2} \naput{\scr $h$}

\eps\]

\noi According to our above definition $(f|g)|h$ is represented by
\[(X_1 \otimes Y_1) \otimes Z_1 \vltmap{(f \otimes g) \otimes h} (X_2 \otimes Y_2) \otimes Z_2\]
whereas $f|(g|h)$ is represented by
\[X_1 \otimes (Y_1 \otimes Z_1) \vltmap{f \otimes (g \otimes h)} X_2 \otimes (Y_2 \otimes Z_2).\]
To show that these represent the same clique map we need to check that the square involving connecting isomorphisms commutes, but in this case the connecting isomorphisms are the associators in $B$, so the square does indeed commute.  The argument for the unit constraints is analogous.

\subsection{Interchange for $\Sigma B$}
\label{sigmainterchange}

Now we show interchange is strict.  On objects it is clear as, either way round, we get


\renewcommand{\ul}{}

\[\psset{unit=0.9mm}
\ps(-10,-10)(10,7)

\psframe(-8,-8)(8,8)
\rput(0,-8){\psline[linestyle=dashed, linecolor=gray!80!white](0,0)(0,16)}
\rput(-8,0){\psline[linestyle=dashed, linecolor=gray!80!white](0,0)(16,0)}

\rput[B](-4,3.5){\scr $a$}
\rput[B](4,3.5){\scr $b$}
\rput[B](-4,-4.5){\scr $c$}
\rput[B](4,-4.5){\scr $d$}

\eps\]

On morphisms this proceeds exactly as in \cite{jk1}.  We need to compare the following two possible ways of tensoring morphisms $f, g, h, j$:

\[\setlength{\arraycolsep}{1.7pt}
\renewcommand{\arraystretch}{1.3}
\begin{array}{ccc}
f & \psline(0,-1)(0,3) & g \\
\hline 
h & \psline(0,-1)(0,3) & j
\end{array}
\hs{1}
\mbox{and}
\hs{1}
\setlength{\arraycolsep}{4pt}
\begin{array}{c|c}
f & g \\[-8pt]
\psline(-2.0,0.5)(1.5,0.5) & \psline(-1.5,0.5)(1.8,0.5) \\[-6pt]
h & j 
\end{array}
\]
each of which gives a morphism in $\Sigma B$ as shown below.


\[\ps(0,-8)(100,6)

%
%

\rput(30,0){\rnode{a}{

\psframe(-8,-8)(8,8)
\rput(0,-8){\psline[linestyle=dashed, linecolor=gray!80!white](0,0)(0,16)}
\rput(-8,0){\psline[linestyle=dashed, linecolor=gray!80!white](0,0)(16,0)}

\rput[B](-4,3.5){\scr $a_1$}
\rput[B](4,3.5){\scr $b_1$}
\rput[B](-4,-4.5){\scr $c_1$}
\rput[B](4,-4.5){\scr $d_1$}
}
}

\rput(70,0){\rnode{b}{

\psframe(-8,-8)(8,8)
\rput(0,-8){\psline[linestyle=dashed, linecolor=gray!80!white](0,0)(0,16)}
\rput(-8,0){\psline[linestyle=dashed, linecolor=gray!80!white](0,0)(16,0)}

\rput[B](-4,3.5){\scr $a_2$}
\rput[B](4,3.5){\scr $b_2$}
\rput[B](-4,-4.5){\scr $c_2$}
\rput[B](4,-4.5){\scr $d_2$}
}
}


\ncline[nodesep=30pt]{->}{a}{b}

\eps\]

\noi According to our definitions of horizontal and vertical tensor product, the first is a clique map represented by
\[(f \otimes g) \otimes (h \otimes j)\]
and the second is represented by
\[(f \otimes h) \otimes (g \otimes j).\]
As usual, to check that these represent the same clique map we need to check that the square involving connecting isomorphisms commute.  Now, the linearising braids in question are:

%


\[\renewcommand{\nowdot}{\psset{unit=1mm}\pscircle*(0,0){0.4}}
\ps(-30,4)(30,50)

\rput(-30,40){\bmp{10em}
For \hs{1}
$\setlength{\arraycolsep}{1.8pt}
\renewcommand{\arraystretch}{1.35}
\begin{array}{ccc}
\ul{a}_1 & \psline(0,-1)(0,3) & b_1 \\[1pt]
\hline 
c_1 & \psline(0,-1)(0,3) & d_1
\end{array}
$
\emp}

\rput(0,40){\rnode{a}{

\rput(-8,-8){
\psframe(0,0)(16,16)
\psline[linestyle=dashed, linecolor=gray](8,0)(8,16)
\psline[linestyle=dashed, linecolor=gray](0,8)(16,8)

\rput(4,12){\rnode{a1}{\scr $\ul{a}_1$}}
\rput(12,12){\rnode{a2}{\scr $b_1$}}
\rput(4,4){\rnode{b1}{\scr $c_1$}}
\rput(12,4){\rnode{b2}{\scr $d_1$}}

}
}

\rput(27,0){\rnode{a}{

\rput(0,-8){
\psline(0,0)(0,16)
\rput(0,16){\psline(-1,0)(1,0)}
\rput(0,0){\psline(-1,0)(1,0)}

\rput(0,0){
\rput(0,14){\rnode{aa1}{\nowdot}
\rput[l](2,0){\scr $\ul{a}_1$}}

\rput(0,10){\rnode{aa2}{\nowdot}
\rput[l](2,0){\scr $b_1$}}

\rput(0,6){\rnode{bb1}{\nowdot}
\rput[l](2,0){\scr $c_1$}}

\rput(0,2){\rnode{bb2}{\nowdot}
\rput[l](2,0){\scr $d_1$}}

}

%

\psset{border=1pt, linecolor=gray, linewidth=1.5pt}

\nccurve[angleA=40,angleB=180,ncurvA=1]{a1}{aa1}
\nccurve[angleA=20,angleB=180,ncurvA=1]{a2}{aa2}

\nccurve[angleA=25,angleB=180,ncurvA=1]{b1}{bb1}
\nccurve[angleA=10,angleB=180,ncurvA=1]{b2}{bb2}

}

}
}
}


\rput(-30,15){\bmp{10em}
For \hs{1}
$\setlength{\arraycolsep}{3pt}
\renewcommand{\arraystretch}{1.35}
\begin{array}{c|c}
a_1 & b_1 \\[-8pt]
\psline(-2.2,0.5)(2,0.5) & \psline(-2,0.5)(2.2,0.5) \\[-8pt]
c_1 & d_1 
\end{array}
$
\emp}

\rput(0,15){\rnode{a}{

\rput(-8,-8){
\psframe(0,0)(16,16)
\psline[linestyle=dashed, linecolor=gray](8,0)(8,16)
\psline[linestyle=dashed, linecolor=gray](0,8)(16,8)

\rput(4,12){\rnode{a1}{\scr $a_1$}}
\rput(12,12){\rnode{a2}{\scr $b_1$}}
\rput(4,4){\rnode{b1}{\scr $c_1$}}
\rput(12,4){\rnode{b2}{\scr $d_1$}}

}
}

\rput(27,0){\rnode{a}{

\rput(0,-8){
\psline(0,0)(0,16)
\rput(0,16){\psline(-1,0)(1,0)}
\rput(0,0){\psline(-1,0)(1,0)}

\rput(0,0){
\rput(0,14){\rnode{aa1}{\nowdot}
\rput[l](2,0){\scr $a_1$}}

\rput(0,6){\rnode{aa2}{\nowdot}
\rput[l](2,0){\scr $b_1$}}

\rput(0,10){\rnode{bb1}{\nowdot}
\rput[l](2,0){\scr $c_1$}}

\rput(0,2){\rnode{bb2}{\nowdot}
\rput[l](2,0){\scr $d_1$}}

}

%

\psset{border=1pt, linecolor=gray, linewidth=1.5pt}

\nccurve[angleA=77,angleB=180,ncurvA=1]{a1}{aa1}
\nccurve[angleA=10,angleB=180,ncurvA=1]{a2}{aa2}

\nccurve[angleA=80,angleB=180,ncurvA=1.7]{b1}{bb1}
\nccurve[angleA=10,angleB=180,ncurvA=1]{b2}{bb2}

}

}
}
}

\eps\]


\noi This latter linearising braid may equivalently be pictured as below (by moving the $c_1$ strand down):

\[\renewcommand{\nowdot}{\psset{unit=1mm}\pscircle*(0,0){0.4}}
\ps(-30,2)(30,23)


\rput(0,12){\rnode{a}{

\rput(-8,-8){
\psframe(0,0)(16,16)
\psline[linestyle=dashed, linecolor=gray](8,0)(8,16)
\psline[linestyle=dashed, linecolor=gray](0,8)(16,8)

\rput(4,12){\rnode{a1}{\scr $a_1$}}
\rput(12,12){\rnode{a2}{\scr $b_1$}}
\rput(4,4){\rnode{b1}{\scr $c_1$}}
\rput(12,4){\rnode{b2}{\scr $d_1$}}

}
}

\rput(27,0){\rnode{a}{

\rput(0,-8){
\psline(0,0)(0,16)
\rput(0,16){\psline(-1,0)(1,0)}
\rput(0,0){\psline(-1,0)(1,0)}

\rput(0,0){
\rput(0,14){\rnode{aa1}{\nowdot}
\rput[l](2,0){\scr $a_1$}}

\rput(0,6){\rnode{aa2}{\nowdot}
\rput[l](2,0){\scr $b_1$}}

\rput(0,10){\rnode{bb1}{\nowdot}
\rput[l](2,0){\scr $c_1$}}

\rput(0,2){\rnode{bb2}{\nowdot}
\rput[l](2,0){\scr $d_1$}}

}

%

\psset{border=1pt, linecolor=gray, linewidth=1.5pt}

\nccurve[angleA=77,angleB=180,ncurvA=1]{a1}{aa1}
\nccurve[angleA=10,angleB=180,ncurvA=1]{a2}{aa2}

\nccurve[angleA=28,angleB=180,ncurvA=1]{b1}{bb1}
\nccurve[angleA=10,angleB=180,ncurvA=1]{b2}{bb2}

}

}
}
}

\eps\]

\noi Thus we see that the connecting isomorphism is given by the braiding in $B$
\[b_1 \otimes c_1 \tmap{\sigma} c_1 \otimes b_1\]
together with the necessary coherence isomorphisms to make an isomorphism
\[(a_1 \otimes b_1) \otimes (c_1 \otimes d_1) \ltra (a_1 \otimes c_1) \otimes (b_1 \otimes c_1)\]
and similarly for the target objects $a_2, b_2, c_2, d_2$.

So the square we need to check is the following, which does indeed commute.
\[
\psset{unit=0.1cm,labelsep=3pt,nodesep=3pt}
\pspicture(-10,-14)(10,16)



\rput(-30,10){\rnode{a1}{$(a_1 \otimes b_1) \otimes (c_1 \otimes d_1)$}}  
\rput(30,10){\rnode{a2}{$(a_2 \otimes b_2) \otimes (c_2 \otimes d_2)$}}  
\rput(-30,-10){\rnode{a3}{$(a_1 \otimes c_1) \otimes (b_1 \otimes d_1)$}}  
\rput(30,-10){\rnode{a4}{$(a_2 \otimes c_2) \otimes (b_2 \otimes d_2)$}}  

\ncline{->}{a1}{a2} \naput{{\scriptsize $(f \otimes g) \otimes (h \otimes j)$}} 
\ncline{->}{a3}{a4} \nbput{{\scriptsize $(f \otimes h) \otimes (g \otimes j)$}} 
\ncline{->}{a1}{a3} \nbput{{\scr \parbox{6em}{\bfr \sf{braiding}\\ \sf{and coherence}\efr}}} 
\ncline{->}{a2}{a4} \naput{{\scr \parbox{6em}{\sf{braiding}\\ \sf{and coherence}} }}

\endpspicture
\]

\noi Thus the two maps represent the same clique map as required, showing that interchange is strict.

We have now proved the following theorem.

\begin{theorem}
Given any braided monoidal category $B$, the category $\Sigma B$ has the structure of a doubly-degenerate \bicats-category.  
\end{theorem}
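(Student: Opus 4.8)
The plan is to invoke Proposition~\ref{dd-char}, which reduces the statement to equipping the category $\Sigma B$ of Section~\ref{sigmacategory} with two monoidal structures — one weak, one strict — satisfying strict interchange. Almost all of the verification has been carried out in Sections~\ref{sigmatensor} and~\ref{sigmainterchange}, so the proof is largely a matter of assembling those pieces and checking that nothing depends on the choices of representative made along the way.

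First I would note that $\Sigma B$ really is a category: since $\Sigma B(X,Y):=\wt{B}(\theta X,\theta Y)$, composition and identities are inherited from the category of cliques $\wt{B}$ (with composites of clique maps formed through the appropriate connecting isomorphisms, as in Section~\ref{cliques}), and the category axioms are then immediate. Next I would assemble the weak vertical tensor product: on objects it is vertical stacking with equal reparametrisation (Section~\ref{vertmonconfigs}), which descends to slide cliques as shown in Section~\ref{slidemaps}; on morphisms it is the tensor product in $\wt{B}$ computed with the vertically-stacked linearising braids of Section~\ref{sigmatensor}, well defined because the square witnessing that two representatives give the same clique map is the $\wt{\otimes}$ of the individual squares. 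Bifunctoriality, the associator and unitors, and the pentagon and triangle identities are all inherited from $\wt{B}$ (whose coherence data is in turn induced from $B$), so the vertical structure is a genuine weak monoidal structure.

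Then I would treat the strict horizontal tensor product $(-)\mid(-)$: on objects it is horizontal stacking of boxes (Section~\ref{slidemaps}), and the slide-clique argument there makes it strictly associative and unital on objects, the reparametrisation being absorbed into the cliques; on morphisms it is again $\otimes$ in $\wt{B}$, now with the horizontally-stacked linearising braids of Section~\ref{sigmatensor}. The crucial point — already verified in Section~\ref{sigmatensor} — is that the candidate associator and unitors on morphisms are represented by the associator and unitors of $B$, but these represent \emph{identity} clique maps, because $(X\mid Y)\mid Z$ and $X\mid(Y\mid Z)$ lie in a common slide clique whose connecting isomorphism is exactly that associator, and similarly for units; hence this tensor product is strict. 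Finally, for strict interchange: on objects the two orders of tensoring produce the same configuration; on morphisms, by the analysis in Section~\ref{sigmainterchange} the two linearising braids underlying $(f\otimes g)\otimes(h\otimes j)$ and $(f\otimes h)\otimes(g\otimes j)$ differ by a single crossing of the $b$- and $c$-strands, so the connecting isomorphism between the two representing objects of $\wt{B}$ is a braiding in $B$ together with coherence constraints, and the corresponding square commutes by naturality of the braiding — so the two composites represent the same clique map.

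The hard part is not any single computation but the uniform bookkeeping of representatives: each of the definitions on morphisms is phrased through a chosen configuration, parenthesisation and linearising braid, and at every stage one must check that the relevant square of connecting isomorphisms commutes, both to see that the construction is independent of these choices and to see that the artefactual weakness of interchange is genuinely absorbed into the cliques. Once the commuting-square lemmas of Sections~\ref{sigmatensor}--\ref{sigmainterchange} are in place, the theorem follows by feeding the two monoidal structures into Proposition~\ref{dd-char}.
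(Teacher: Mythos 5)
Your proposal is correct and follows essentially the same route as the paper: the theorem there is stated as a summary of the constructions in Sections~\ref{sigmacategory}, \ref{sigmatensor} and \ref{sigmainterchange}, fed into the characterisation of Proposition~\ref{dd-char}, which is exactly how you assemble it. Your identification of the key points — strictness of the horizontal tensor via the slide-clique connecting isomorphisms being the associators/unitors of $B$, and strict interchange via the single braiding-plus-coherence connecting isomorphism making the representing square commute by naturality — matches the paper's argument.
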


Note that when we use interchange to construct braidings for the main theorem, we will be dealing with two special cases where two of the objects are identities, that is, empty configurations.  The following linearising braid then simplifies: 

\[\renewcommand{\nowdot}{\psset{unit=1mm}\pscircle*(0,0){0.4}}
\ps(-10,3)(30,21)


\rput(0,12){\rnode{a}{

\rput(-8,-8){
\psframe(0,0)(16,16)
\psline[linestyle=dashed, linecolor=gray](8,0)(8,16)
\psline[linestyle=dashed, linecolor=gray](0,8)(16,8)

\rput(4,12){\rnode{a1}{\scr $a$}}
\rput(12,12){\rnode{a2}{\scr $b$}}
\rput(4,4){\rnode{b1}{\scr $c$}}
\rput(12,4){\rnode{b2}{\scr $d$}}

}
}

\rput(27,0){\rnode{a}{

\rput(0,-8){
\psline(0,0)(0,16)
\rput(0,16){\psline(-1,0)(1,0)}
\rput(0,0){\psline(-1,0)(1,0)}

\rput(0,0){
\rput(0,14){\rnode{aa1}{\nowdot}
\rput[l](2,0){\scr $a$}}

\rput(0,6){\rnode{aa2}{\nowdot}
\rput[l](2,0){\scr $b$}}

\rput(0,10){\rnode{bb1}{\nowdot}
\rput[l](2,0){\scr $c$}}

\rput(0,2){\rnode{bb2}{\nowdot}
\rput[l](2,0){\scr $d$}}

}

%

\psset{border=1pt, linecolor=gray, linewidth=1.5pt}

\nccurve[angleA=77,angleB=180,ncurvA=1]{a1}{aa1}
\nccurve[angleA=10,angleB=180,ncurvA=1]{a2}{aa2}

\nccurve[angleA=28,angleB=180,ncurvA=1]{b1}{bb1}
\nccurve[angleA=10,angleB=180,ncurvA=1]{b2}{bb2}

}

}
}
}

\eps\]

\begin{itemize}

\item If $a$ and $d$ are identities then we just have a braiding of $c$ past $b$.

\item If $c$ and $b$ are identities then the braid becomes trivial.

\end{itemize}

%
%
%
%
%
%
%
%
%
%
%
%
%
%
%
%
%

\section{The main theorem}
\label{maintheorem}

We now build up to our main theorem, giving a braided monoidal equivalence between $U \Sigma B$ and $B$.  Recall that $U \Sigma B$ is the underlying braided monoidal category of $\Sigma B$, with respect to the vertical tensor product.  We will start with just the underlying categories before addressing the braided monoidal structure.  Note that when a doubly-degenerate \bicats-category $A$ is expressed as a category with two monoidal structures, the underlying category of $UA$ is just the underlying category of $A$. Thus $U\Sigma B$ has the same objects and morphisms as $\Sigma B$.

\begin{proposition}
There is an equivalence of the underlying categories
\[B \tra U\Sigma B.\]
\end{proposition}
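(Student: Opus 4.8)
The equivalence will be built as the composite
\[B \mtmap{G} \cF\cO \mtlmap{F} \Pi_1\big(C(I^2,\cO)\big) \mtmap{\theta\text{-ish}} U\Sigma B,\]
or rather, more directly, as the functor sending an object $b \in B$ to the singleton slide clique $\langle b\rangle$ consisting of a single point in the centre of $I^2$ labelled by $b$. On morphisms, a map $b \tmap{f} b'$ in $B$ is sent to the clique map $\theta\langle b\rangle \tmap{} \theta\langle b'\rangle$ represented by $f$ itself, using as representatives of $\theta\langle b\rangle$ and $\theta\langle b'\rangle$ the ``trivial'' objects $(b, \ast, 1_b)$ and $(b', \ast, 1_{b'})$ of the respective 2-fibred products (the ones whose linearising braid is the identity). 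I would first check this is a functor: the composite of the representatives $f$ and $g$ in $B$ represents the composite clique map, because the connecting isomorphism in the middle of the composite-of-cliques square is the identity slide map on a single point (there is only one configuration with one point, up to slide), so no nontrivial connecting isomorphism is picked up. Identities are likewise clear.

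**Fullness and faithfulness.** These are the easy parts and should be dispatched quickly. A morphism $\langle b\rangle \tra \langle b'\rangle$ in $\Sigma B$ is by definition a clique map $\theta\langle b\rangle \tra \theta\langle b'\rangle$ in $\wt{B}$; by Remark~\ref{cliqueremarks} such a clique map is determined by, and can be specified by, any single component, in particular the component $b \tra b'$ between the trivial representatives. So every clique map arises from a unique morphism of $B$ between those trivial representatives, giving full faithfulness. One subtlety worth spelling out: a clique map specified by a morphism between \emph{other} representatives (with nontrivial linearising braids) will in general \emph{not} be specified by that same underlying morphism between the trivial representatives — the linearising braids contribute braidings in $B$ via the connecting isomorphisms — but this is exactly the content of the clique-morphism square, so it does not affect the count.

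**Essential surjectivity — the main obstacle.** This is the step that needs real work. Given an arbitrary object $X$ of $\Sigma B$, i.e.\ a slide clique of configurations of $n$ labelled points in $I^2$, I must produce an object $b \in B$ and an isomorphism $\langle b\rangle \iso X$ in $\Sigma B$, i.e.\ an isomorphism of cliques $\theta\langle b\rangle \iso \theta X$ in $\wt B$. The natural choice is to take $b$ to be the evaluation in $B$ of any chosen parenthesisation of the labels of $X$ — say read off from one chosen linearising braid $\gamma$ of one chosen configuration $X_{j_0}$. This gives an object $(c, j_0, \gamma)$ of the 2-fibred product indexing $F^*X$, hence an object $Gc = b$ of $\theta X$. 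Then the \emph{identity} morphism $b \tra b$ in $B$, read as a component from the trivial representative of $\theta\langle b\rangle$ to this representative of $\theta X$, specifies a clique map $\theta\langle b\rangle \tra \theta X$; since one component is an isomorphism, the whole clique map is an isomorphism (Remark~\ref{cliqueremarks}). The thing to verify carefully is that this is genuinely well-defined as a clique map $\theta\langle b\rangle \tra \theta X$ — i.e.\ that it does not depend on which object of $\theta X$ we hit — which is automatic once we know $\theta X$ is a clique (a consequence of $F$ being an equivalence, as established in Section~\ref{inducedfunctors}), but it is worth stating explicitly that the coherence maps plus mediating braids between different representatives of $\theta X$ are exactly what makes the relevant squares commute. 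So the "obstacle" is less a mathematical difficulty than the bookkeeping of chasing $X$ through the definition of $\theta = G_! \circ F^* $ and confirming that the 2-fibred product is nonempty (needing $F$ essentially surjective — Section~\ref{equivfree}) and contractible (needing $F$ full and faithful), so that the single-component specification really does determine an isomorphism of cliques. Combining functoriality, full faithfulness and essential surjectivity gives the equivalence $B \tra U\Sigma B$.
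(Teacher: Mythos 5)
Your proposal is correct and follows essentially the same route as the paper: the same functor (singleton slide clique on objects, clique map represented by $f$ itself on morphisms), full faithfulness via the observation that singleton configurations force trivial linearising braids so that a clique map is determined by its component between the trivial representatives, and essential surjectivity by choosing a parenthesisation $b=\langle X\rangle$ and letting the identity $1_b$ represent the isomorphism of cliques. The only cosmetic difference is that you phrase the functoriality check in terms of the slide clique of a single point, whereas the relevant connecting isomorphism lives in $\wt{B}$ (where $\theta\ol{b}$ also contains unit-padded words); since you compose at the same index this is the identity anyway, matching the paper's ``functoriality is immediate.''
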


\begin{proof}
First we construct the functor, which we will call $W$.

\begin{itemize}
\item On objects: we send an object $b \in B$ to the slide clique of the singleton $b$ in a box:
\[\pspicture(-5,-5)(5,5)

\psframe(-5,-5)(5,5)
\pscircle*(0,0){0.5}
\rput(0,2){\scr $b$}

\endpspicture\]
We will write this clique as $\ol{b}$. Note that this clique also contains all configurations where the dot is on the same vertical level but might have a different horizontal position, such as: 
\[\pspicture(-5,-5)(5,5)

\psframe(-5,-5)(5,5)

\rput(-2,0){
\pscircle*(0,0){0.5}
\rput(0,2){\scr $b$}
}
\endpspicture\]

\item On morphisms, $a \tmap{f} b$ is sent to the clique map represented by $f$, as depicted below.

\[\renewcommand{\nowdot}{\psset{unit=1mm}\pscircle*(0,0){0.4}}
\psset{unit=0.7mm}
\ps(0,0)(60,40)

\rput(0,25){

\psframe(0,0)(16,16)

\rput(8,8){\rnode{a1}{\nowdot}
\rput(-2,0){\scr $a$}
}

\rput(35,0){
\psline(0,0)(0,16)
\rput(0,16){\psline(-1,0)(1,0)}
\rput(0,0){\psline(-1,0)(1,0)}

\rput(0,8){\rnode{a2}{\nowdot}
\rput[l](2,0){\scr $a$}}

}

\psset{border=1pt}

\nccurve[angleA=10,angleB=180]{a1}{a2}

}


\rput(0,0){

\psframe(0,0)(16,16)

\rput(8,8){\rnode{a1}{\nowdot}
\rput(-2,0){\scr $b$}
}

\rput(35,0){
\psline(0,0)(0,16)
\rput(0,16){\psline(-1,0)(1,0)}
\rput(0,0){\psline(-1,0)(1,0)}

\rput(0,8){\rnode{aa2}{\nowdot}
\rput[l](2,0){\scr $b$}}

}

\psset{border=1pt}

\nccurve[angleA=10,angleB=180]{a1}{aa2}

}

\ncline[arrows=->,offset=16pt, nodesep=8pt]{a2}{aa2} \naput{\scr $f$}


%



%

\eps\]

\end{itemize}

\noi Functoriality is immediate.

We now show that $W \: B \tra U \Sigma B$ is full and faithful.  Consider objects $a, b$ in $B$; we need to show that the function on homs
\[B(a,b) \ltra U\Sigma B(Wa, Wb)\]
is an isomorphism.  We know that $Wa = \ol{a}$ and  $Wb = \ol{b}$, so 
\[\renewcommand{\arraystretch}{1.2}  
\begin{array}{rcll}
U\Sigma B(Wa, Wb) &=& U\Sigma B (\ol{a}, \ol{b}) \\
&=& \Sigma B (\ol{a}, \ol{b})  & \mbox{by definition of $U$}\\
&=& \wt{B}\big(\theta\ol{a}, \theta\ol{b}\big) & \mbox{by definition of $\Sigma$}
\end{array}\] 
where we recall that the functor $\theta$ is the composite
\[\theta := \hs{-0.3}\raisebox{0.1em}{\makebox[0.6em][l]{\scalebox{2.3}[1.3]{$\widetilde{\hs{3}}$}}}\Pi_1\big(C(I^2, \cO)\big) \mtmap{\hs{0.3}F^*} \wt{\cF\cO} \mtmap{G_!} \wt{B}.\]

We now examine the action of $\theta$ on the clique $\ol{a}$. First note that every object of the clique $\ol{a}$ is a singleton dot, so every possible linearising braid is trivial (a single strand), and thus the clique $F^*\ol{a}$ has only trivial mediating braids.   The clique $G_!F^*\ol{a}$ (also written $\theta\ol{a}$) can then be represented by $a \in B$; other objects in the clique will include copies of the unit $I_B$, and the connecting maps are built from unit constraints.  Similarly the clique $\theta\ol{b}$ can then be represented by $b$, so
\[\wt{B}\big(\theta\ol{a}, \theta\ol{b}\big) \iso B(a,b)\]
thus
\[U \Sigma B(Wa, Wb) = \wt{B}(\theta\ol{a}, \theta\ol{b}) \iso B(a,b).\]
Moreover, via this isomorphism the action of the functor $W$
\[W\: B(a,b) \ltra U \Sigma B(Wa, Wb) \iso B(a,b)\]
sends a morphism $f$ to itself, so we see that $B \tmap{W} U\Sigma B$ is full and faithful as claimed.

We now show that the functor $W$ is essentially surjective.  Consider any $X \in U\Sigma B$.  We need to exhibit an object $b \in B$ such that $Wb \iso X$ in $U\Sigma B$.  Recall that $X$ is by definition a slide clique of a configuration of points labelled by objects of $B$. Set $b = \langle X \rangle$, any parenthesisation of the objects of $B$ labelling $X$.  Then $Wb$ is the clique of the singleton


%
%
%
%
%
%
%
%
%
%
%
%
%
%
%
%
%

\[\pspicture(-5,-5)(5,5)

\psframe(-5,-5)(5,5)
\pscircle*(0,0){0.5}
\rput(0,2){\scr $b$}

\endpspicture\]
This is isomorphic to $X$ in $U\Sigma B$ via a clique map represented by the identity map $1_b$ in $B$.  (Note that it is represented by an identity map in $B$ but is not the identity as a clique map, so it is not the identity in $U\Sigma B$.)

This is perhaps elucidated by a specific example.  Suppose $X$ is the slide clique of the following labelled configuration:

\[\renewcommand{\nowdot}{\psset{unit=1mm}\pscircle*(0,0){0.4}}
\psset{unit=1.3mm}
\ps(0,0)(16,15)

\rput(0,0){


\psframe(0,0)(16,16)
\pnode(8,8){x0}

\rput(5,13){\rnode{b1}{\nowdot}
\rput(-2,0){\scr $b_1$}
}

\rput(7,6){\rnode{b2}{\nowdot}
\rput(0,-2){\scr $b_2$}
}
\rput(13,6){\rnode{b3}{\nowdot}
\rput(0,-2){\scr $b_3$}}

}

\eps\]
We choose $b = b_1 \otimes (b_2 \otimes b_3)$. Then $Wb$ is the slide clique of the following singleton configuration
\[\renewcommand{\nowdot}{\psset{unit=1mm}\pscircle*(0,0){0.4}}
\psset{unit=1.3mm}
\ps(0,-1)(16,16)

\rput(0,0){


\psframe(0,0)(16,16)
\pnode(8,8){x0}

\rput(8,8){\rnode{b1}{\nowdot}
\rput(0,-2){\scr $b_1 \otimes (b_2 \otimes b_3)$}
}

%
}

\eps\]
We can then exhibit an isomorphism $X \tmap{\sim} Wb$  in $\Sigma B$ (and hence in $U\Sigma B$).  Such a morphism in $\Sigma B$ is by definition a clique map

\[\theta X \tra \theta (Wb) \in \wt{B}\]
Each of these cliques can be represented by the object $b = b_1 \otimes (b_2 \otimes b_3) \in B$ so $1_b$ is a valid morphism; moreover as it is invertible it is an isomorphism of the cliques in question. 

Note that $\theta (Wb)$ has a trivial linearising braid whereas $\theta X$ in general has a non-trivial one.  The map in $\Sigma B$ is depicted below.

\[\renewcommand{\nowdot}{\psset{unit=1mm}\pscircle*(0,0){0.4}}
\psset{unit=1.3mm}
\ps(0,-2)(70,43)

\rput(0,25){


\psframe(0,0)(16,16)
\pnode(8,8){x0}

\rput(5,13){\rnode{b1}{\nowdot}
\rput(-2,0){\scr $b_1$}
}

\rput(7,6){\rnode{b2}{\nowdot}
\rput(0,-2){\scr $b_2$}
}
\rput(13,6){\rnode{b3}{\nowdot}
\rput(0,-2){\scr $b_3$}}

\rput(-4,8){$X$}

\rput(30,0){
\psline(0,0)(0,16)
\pnode(0,8){Fc0}
\rput(0,16){\psline(-1,0)(1,0)}
\rput(0,0){\psline(-1,0)(1,0)}

\rput(0,8){\psline[linecolor=gray!80!white](-1,0)(1,0)}
\rput(0,4){\psline[linecolor=gray!80!white](-1,0)(1,0)}

\rput(0,12){\rnode{c1}{\nowdot}
\rput[l](2,0){\scr $b_1$}}

\rput(0,6){\rnode{c2}{\nowdot}
\rput[l](2,0){\scr $b_2$}}

\rput(0,2){\rnode{c3}{\nowdot}
\rput[l](2,0){\scr $b_3$}}


\rput(30,8){\rnode{za}{$b_1 \otimes (b_2 \otimes b_3)$}}

}

\psset{border=1pt}

\nccurve[angleA=10,angleB=180]{b1}{c1}
\nccurve[angleA=30,angleB=180]{b2}{c2}
\nccurve[angleA=-20,angleB=180]{b3}{c3}


}


\rput(0,0){

\psframe(0,0)(16,16)
\pnode(8,8){x3}

\rput(-4,8){$Wb$}

\rput(8,8){\rnode{a1}{\nowdot}
\rput(0,-2){\scr $b_1 \otimes (b_2 \otimes b_3)$}
}



\rput(30,0){
\psline(0,0)(0,16)
\rput(0,16){\psline(-1,0)(1,0)}
\rput(0,0){\psline(-1,0)(1,0)}

\rput(30,8){\rnode{zb}{$b_1 \otimes (b_2 \otimes b_3)$}}


\rput(0,8){\rnode{a2}{\nowdot}
\rput[l](2,0){\scr $b_1 \otimes (b_2 \otimes b_3)$}}

}

\psset{border=1pt}

\nccurve[angleA=10,angleB=180]{a1}{a2}


\ncline[nodesep=6pt]{->}{za}{zb} \naput{\scr $1_{b_1 \otimes (b_2 \otimes b_3)}$}

}



{\psset{nodesep=35pt, linecolor=gray!80!white}

\ncline[offset=8pt]{x0}{x1}
\ncline{x0}{x1}
\ncline[offset=-8pt]{x0}{x1}


}

\eps\]

\noi We have exhibited $b \in B$ and an isomorphism $X \iso Wb$, so $B \tra \Sigma B$ is essentially surjective as claimed.  This completes the proof that $W\: B \lra U \Sigma B$ is an equivalence of categories.

\end{proof}

\begin{proposition}
The functor $B \tmap{W} U \Sigma B$ is monoidal. 
\end{proposition}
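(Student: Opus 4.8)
The plan is to exhibit the monoidal structure on $W$ and verify its coherence, with the key point being that everything is inherited from the (already established) monoidal structure on cliques in $B$ and the way $\theta$ interacts with it. Recall that $U\Sigma B$ has as its monoidal structure the \emph{vertical} tensor product on $\Sigma B$, which on objects is vertical stacking of boxes with equal reparametrisation and on morphisms is $\otimes$ in $\wt B$ applied to the chosen representatives (with linearising braids obtained by stacking vertically). So to make $W$ monoidal I must produce, for objects $a,b\in B$, a coherent isomorphism $Wa \mathbin{\frac{\ }{\ }} Wb \;\iso\; W(a\otimes b)$ in $U\Sigma B$, together with a unit comparison $I_{U\Sigma B} \iso W(I_B)$, and check the usual two coherence axioms.

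First I would treat the unit: $W(I_B)$ is the slide clique of a single dot labelled $I_B$, while the unit of $U\Sigma B$ is the empty box. There is a clique map between $\theta W(I_B)$ and $\theta(\text{empty box})$ represented in $B$ by the unit isomorphism (the empty box gives, under $\theta$, a clique of copies of $I_B$ with connecting maps built from unit constraints, exactly as in the essential-surjectivity argument of the previous proposition), and this is an isomorphism because it has an isomorphism component. Next, for the binary comparison: $Wa \mathbin{\frac{\ }{\ }} Wb$ is the slide clique of two dots stacked vertically, labelled $a$ (top) and $b$ (bottom); applying $\theta$ gives a clique in $B$ that can be represented by $a\otimes b$ with trivial linearising braid — because the configuration already has both points on the central vertical line in the correct order, so the word $(a,b)$ with the empty-braid linearisation is a legitimate representative, and $G_!$ evaluates it to $a\otimes b$. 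On the other hand $W(a\otimes b)$ is the slide clique of a single dot labelled $a\otimes b$, which $\theta$ sends to a clique also representable by $a\otimes b\in B$ with trivial linearising braid. Hence $1_{a\otimes b}$ represents an isomorphism $Wa \mathbin{\frac{\ }{\ }} Wb \iso W(a\otimes b)$ in $\wt B$, and I would take this to be the monoidal comparison $\phi_{a,b}$. Naturality in $a$ and $b$ is immediate: given $f\colon a\to a'$, $g\colon b\to b'$, the square expressing naturality of $\phi$ is, after passing to representatives in $B$, just the square $(f\otimes g)\circ 1 = 1\circ(f\otimes g)$, which commutes; one only has to note that the vertical tensor $\frac{Wf}{Wg}$ is by definition represented by $f\otimes g$ with vertically-stacked linearising braids, which here are still trivial, so the representative in $B$ really is $f\otimes g$.

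Then I would check the two monoidal-functor coherence diagrams. For the associativity hexagon, the three composites around it are all, after choosing the evident representatives (all with trivial linearising braids, since everything stays on the central line), built from the associator $\alpha$ of $B$ together with identity clique maps; the diagram reduces to the pentagon/associativity identity $\alpha$ already satisfies in $B$, interpreted one level up in $\wt B$ — and since $\theta$ sends the relevant configurations to cliques representable with trivial braids, no braidings are introduced and there is nothing extra to check beyond what holds in $B$. The left and right unit triangles reduce similarly to the unit axioms of $B$. I would phrase all of this by saying that $W$ composed with the (full, faithful) inclusions and with $\theta$ lands, on this class of objects, in the image of the canonical embedding $B\hookrightarrow\wt B$, and that embedding is a (strong) monoidal functor, so the monoidal coherence data transports directly.

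The main obstacle — and the step deserving genuine care rather than a wave of the hand — is bookkeeping the linearising braids: one must confirm that the \emph{vertical} tensor of morphisms in $\Sigma B$, whose defining representative uses the vertically-stacked linearising braids of the factors, genuinely agrees \emph{as a clique map} with the representative $f\otimes g$ built from trivial braids when the factors $Wf,Wg$ are of the singleton form. This is where one uses that a clique map is determined by any one component together with commuting connecting-isomorphism squares, so changing the representing linearising braid only changes the component by a mediating braid, and here all mediating braids among singleton/linear configurations are trivial. Once that identification is pinned down, the coherence axioms are, as indicated, inherited from $B$ and require no further computation.
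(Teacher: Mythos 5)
Your proposal is correct and follows essentially the same route as the paper: the comparison $\phi_{ab}\colon Wa\,\frac{\ }{\ }\,Wb \to W(a\otimes b)$ and the unit comparison are both taken to be clique maps represented by identities in $B$ (using that the stacked singleton configurations lie on the central vertical line with trivial linearising braids), and the monoidal coherence diagrams then reduce to the associativity and unit coherence already present in $B$. Your explicit attention to naturality and to why the vertically-stacked linearising braids give the same clique map as the trivial ones is a slightly more careful rendering of a point the paper passes over quickly, but it is the same argument.
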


\begin{proof}

First we need to construct a constraint isomorphism
\[ Wa \otimes Wb \ltmap{\phi_{ab}} W(a \otimes b) \ \in  U\Sigma B\]
for any objects $a, b \in B$. The configurations for $Wa \otimes Wb$ and $W(a \otimes b)$ are shown below:

\[\pspicture(0,-8)(40,13)

\rput(0,0){
\pspicture(-8,-8)(8,8)

\rput(0,12){$Wa \otimes Wb$}

\psframe(-8,-8)(8,8)
\pscircle*(0,4){0.5}
\rput(3,4){\scr $a$}

\pscircle*(0,-4){0.5}
\rput(3,-4){\scr $b$}

\psline[linestyle=dashed, linecolor=gray!70!white](-8,0)(8,0)

\endpspicture
}

\rput(40,0){
\pspicture(-8,-8)(8,8)

\rput(0,12){$W(a \otimes b)$}

\psframe(-8,-8)(8,8)
\pscircle*(0,0){0.5}
\rput(0,3){\scr $a \otimes b$}

\eps
}

\eps
\]

%
%
%
%
%
%
%
%
%
%
%
%
%
%
%
%
%
%
%
%
%
%
%
%
%
%
%
%
%
%
%
%
%
%
%
%

\noi We can pick the constraint isomorphism to be the clique map represented by $1_{a \otimes b}$ as shown below.



\[\psset{unit=0.8mm}
\ps(40,43)

\rput(0,25){

\psframe(0,0)(16,16)

\rput(8,12){\rnode{a1}{\nowdot}
\rput(-2,0){\scr $a$}
}

\rput(8,4){\rnode{b1}{\nowdot}
\rput(-2,0){\scr $b$}
}

\psline[linestyle=dashed, linecolor=gray](0,8)(16,8)

\rput(35,0){
\psline(0,0)(0,16)
\rput(0,16){\psline(-1,0)(1,0)}
\rput(0,0){\psline(-1,0)(1,0)}

\rput(25,8){\rnode{aa}{$a \otimes b$}}

\rput(0,12){\rnode{a2}{\nowdot}
\rput[l](2,0){\scr $a$}}

\rput(0,4){\rnode{b2}{\nowdot}
\rput[l](2,0){\scr $b$}}
}

\psset{border=1pt}

\nccurve[angleA=10,angleB=180]{a1}{a2}
\nccurve[angleA=10,angleB=180]{b1}{b2}


}

\rput(0,0){

\psframe(0,0)(16,16)

\rput(8,8){\rnode{b1}{\nowdot}
\rput[c](0,3){\scr $a \otimes b$}
}

\rput(35,0){
\psline(0,0)(0,16)
\rput(0,16){\psline(-1,0)(1,0)}
\rput(0,0){\psline(-1,0)(1,0)}

\rput(0,8){\rnode{b2}{\nowdot}
\rput[l](2,0){\scr $a \otimes b$}}

\rput(25,8){\rnode{bb}{$a \otimes b$}}

}

\psset{border=1pt}

\nccurve[angleA=10,angleB=180]{b1}{b2}


}

\ncline[nodesep=10pt,offset=0pt]{->}{aa}{bb}\naput{\scr $1_{a \otimes b}$}

\eps\]

For the unit constraint
\[I_{U\Sigma B} \tmap{\sim} WI_B\]
note that $I_{U\Sigma B}$ is the empty box, and $FI_B$ is the singleton labelled by $I_B$ so we can also choose this map to be a clique map represented by the identity, as shown below:


\[\psset{unit=0.8mm}
\ps(40,42)

\rput(0,25){

\psframe(0,0)(16,16)

%
%

\rput(35,0){
\psline(0,0)(0,16)
\rput(0,16){\psline(-1,0)(1,0)}
\rput(0,0){\psline(-1,0)(1,0)}

\rput(25,8){\rnode{aa}{$I_B$}}

%
%
%
}

\psset{border=1pt}

\nccurve[angleA=10,angleB=180]{a1}{a2}
\nccurve[angleA=10,angleB=180]{b1}{b2}


}

\rput(0,0){

\psframe(0,0)(16,16)

\rput(8,8){\rnode{b1}{\nowdot}
\rput[c](0,3){\scr $I_B$}
}

\rput(35,0){
\psline(0,0)(0,16)
\rput(0,16){\psline(-1,0)(1,0)}
\rput(0,0){\psline(-1,0)(1,0)}

\rput(0,8){\rnode{b2}{\nowdot}
\rput[l](2,0){\scr $I_B$}}

\rput(25,8){\rnode{bb}{$I_B$}}

}

\psset{border=1pt}

\nccurve[angleA=10,angleB=180]{b1}{b2}


}

\ncline[nodesep=10pt,offset=0pt]{->}{aa}{bb}\naput{\scr $1_{I_B}$}

\eps\]

\noi Note that neither of these maps is the identity in $U\Sigma B$ but both are clique maps represented by the identity in $B$, so are isomorphisms in the category of cliques.  

We now check the axioms for a monoidal functor. For interaction with the associator we need to check that the following diagram commutes.

\[
\psset{unit=0.1cm,labelsep=3pt,nodesep=3pt}
\pspicture(-20,-3)(20,33)



\rput[B](-25,30){\Rnode{tl}{$(Wa \otimes Wb) \otimes Wc$}}  
\rput[B](25,30){\Rnode{tr}{$Wa \otimes (Wb \otimes Wc)$}}  
\rput[B](-25,15){\Rnode{ml}{$W(a \otimes b) \otimes Wc$}}  
\rput[B](25,15){\Rnode{mr}{$Wa \otimes W(b \otimes c)$}}  
\rput[B](-25,0){\Rnode{bl}{$W\big((a \otimes b) \otimes c \big)$}}  
\rput[B](25,0){\Rnode{br}{$W\big(a \otimes (b \otimes c)\big)$}}  

\ncline{->}{tl}{tr} \naput{{\scriptsize \sf assoc}} 
\ncline{->}{bl}{br} \nbput{{\scriptsize $W(\mbox{\sf assoc})$}} 
\ncline{->}{tl}{ml} \nbput{{\scriptsize $\phi_{ab} \otimes 1_{Wc}$}} 
\ncline{->}{tr}{mr} \naput{{\scriptsize $1_{Wa} \otimes \phi_{bc}$}} 
\ncline{->}{ml}{bl} \nbput{{\scriptsize $\phi_{a\otimes b, c}$}} 
\ncline{->}{mr}{br} \naput{{\scriptsize $\phi_{a, b\otimes c}$}} 

\endpspicture
\]

\noi It is hard to fit the expression of this diagram into the page, so we will take it in parts. The left-hand side is a clique map that can be represented by the following composite of identities:

\[\psset{unit=0.9mm}
\ps(40,68)

\rput(0,52){

\rput[B](-20,8){\Rnode{tl}{\small $(Wa \otimes Wb) \otimes Wc$}}

\psline[linestyle=dashed, linecolor=gray](0,12)(16,12)
\psline[linestyle=dashed, linecolor=gray](0,8)(16,8)

\psframe(0,0)(16,16)

\rput(8,14){\rnode{a1}{\nowdot}
\rput(-2,0){\scr $a$}
}

\rput(8,10){\rnode{b1}{\nowdot}
\rput(-2,0){\scr $b$}
}

\rput(8,4){\rnode{c1}{\nowdot}
\rput(-2,0){\scr $c$}
}

\rput(35,0){

\rput(-10,0){

\psline(0,0)(0,16)
\rput(0,16){\psline(-1,0)(1,0)}
\rput(0,0){\psline(-1,0)(1,0)}

\rput(0,14){\rnode{a2}{\nowdot}
\rput[l](2,0){\scr $a$}}

\rput(0,10){\rnode{b2}{\nowdot}
\rput[l](2,0){\scr $b$}}

\rput(0,4){\rnode{c2}{\nowdot}
\rput[l](2,0){\scr $c$}}
}

\rput(21,8){\rnode{aa}{$(a \otimes b) \otimes c$}}

}

\psset{border=1pt}

\nccurve[angleA=10,angleB=180]{a1}{a2}
\nccurve[angleA=10,angleB=180]{b1}{b2}
\nccurve[angleA=10,angleB=180]{c1}{c2}


}


\rput(0,26){

\rput[B](-20,8){\Rnode{ml}{\small $W (a \otimes b) \otimes Wc$}}

\psframe(0,0)(16,16)

\rput(8,12){\rnode{a1}{\nowdot}
\rput(-3,1.5){\scr $a \otimes b$}
}

\rput(8,4){\rnode{b1}{\nowdot}
\rput(-2,0){\scr $c$}
}

\psline[linestyle=dashed, linecolor=gray](0,8)(16,8)

\rput(35,0){

\rput(-10,0){
\psline(0,0)(0,16)
\rput(0,16){\psline(-1,0)(1,0)}
\rput(0,0){\psline(-1,0)(1,0)}

\rput(0,12){\rnode{a2}{\nowdot}
\rput[l](2,0){\scr $a\otimes b$}}

\rput(0,4){\rnode{b2}{\nowdot}
\rput[l](2,0){\scr $c$}}
}

\rput(21,8){\rnode{bb}{$(a \otimes b) \otimes c$}}

}

\psset{border=1pt}

\nccurve[angleA=10,angleB=180]{a1}{a2}
\nccurve[angleA=10,angleB=180]{b1}{b2}


}


\rput(0,0){

\rput[B](-20,8){\Rnode{bl}{\small $W \big((a \otimes b) \otimes c\big)$}}

\psframe(0,0)(16,16)

\rput(8,8){\rnode{b1}{\nowdot}
\rput[c](0,3){\scr $(a \otimes b)\otimes c$}
}

\rput(35,0){

\rput(-10,0){
\psline(0,0)(0,16)
\rput(0,16){\psline(-1,0)(1,0)}
\rput(0,0){\psline(-1,0)(1,0)}

\rput(0,8){\rnode{b2}{\nowdot}
\rput[l](2,0){\scr $(a \otimes b) \otimes c$}}
}

\rput(21,8){\rnode{cc}{$(a \otimes b) \otimes c$}}

}

\psset{border=1pt}

\nccurve[angleA=10,angleB=180]{b1}{b2}


}

\ncline[nodesep=10pt,offset=0pt]{->}{aa}{bb}\naput{\scr $1_{(a \otimes b)\otimes c}$}
\ncline[nodesep=10pt,offset=0pt]{->}{bb}{cc}\naput{\scr $1_{(a \otimes b)\otimes c}$}

\eps\]

\noi Similarly, the right-hand side can be represented by the following composite of identities:

\[\psset{unit=0.9mm}
\ps(40,68)

\rput(0,52){

\rput[B](-20,8){\Rnode{tl}{\small $Wa \otimes (Wb \otimes Wc)$}}

\psline[linestyle=dashed, linecolor=gray](0,8)(16,8)
\psline[linestyle=dashed, linecolor=gray](0,4)(16,4)

\psframe(0,0)(16,16)

\rput(8,12){\rnode{a1}{\nowdot}
\rput(-2,0){\scr $a$}
}

\rput(8,6){\rnode{b1}{\nowdot}
\rput(-2,0){\scr $b$}
}

\rput(8,2){\rnode{c1}{\nowdot}
\rput(-2,0){\scr $c$}
}

\rput(35,0){

\rput(-10,0){

\psline(0,0)(0,16)
\rput(0,16){\psline(-1,0)(1,0)}
\rput(0,0){\psline(-1,0)(1,0)}

\rput(0,12){\rnode{a2}{\nowdot}
\rput[l](2,0){\scr $a$}}

\rput(0,6){\rnode{b2}{\nowdot}
\rput[l](2,0){\scr $b$}}

\rput(0,2){\rnode{c2}{\nowdot}
\rput[l](2,0){\scr $c$}}
}

\rput(21,8){\rnode{aa}{$a \otimes (b \otimes c)$}}

}

\psset{border=1pt}

\nccurve[angleA=10,angleB=180]{a1}{a2}
\nccurve[angleA=10,angleB=180]{b1}{b2}
\nccurve[angleA=10,angleB=180]{c1}{c2}


}


\rput(0,26){

\rput[B](-20,8){\Rnode{ml}{\small $W a \otimes W(b \otimes c)$}}

\psframe(0,0)(16,16)

\rput(8,12){\rnode{a1}{\nowdot}
\rput(-2,0){\scr $a$}
}

\rput(8,4){\rnode{b1}{\nowdot}
\rput(-2.7,-2){\scr $b \otimes c$}
}

\psline[linestyle=dashed, linecolor=gray](0,8)(16,8)

\rput(35,0){

\rput(-10,0){
\psline(0,0)(0,16)
\rput(0,16){\psline(-1,0)(1,0)}
\rput(0,0){\psline(-1,0)(1,0)}

\rput(0,12){\rnode{a2}{\nowdot}
\rput[l](2,0){\scr $a$}}

\rput(0,4){\rnode{b2}{\nowdot}
\rput[l](2,0){\scr $b \otimes c$}}
}

\rput(21,8){\rnode{bb}{$a \otimes (b \otimes c)$}}

}

\psset{border=1pt}

\nccurve[angleA=10,angleB=180]{a1}{a2}
\nccurve[angleA=10,angleB=180]{b1}{b2}


}


\rput(0,0){

\rput[B](-20,8){\Rnode{bl}{\small $W \big(a \otimes (b \otimes c)\big)$}}

\psframe(0,0)(16,16)

\rput(8,8){\rnode{b1}{\nowdot}
\rput[c](0,3){\scr $a \otimes (b\otimes c)$}
}

\rput(35,0){

\rput(-10,0){
\psline(0,0)(0,16)
\rput(0,16){\psline(-1,0)(1,0)}
\rput(0,0){\psline(-1,0)(1,0)}

\rput(0,8){\rnode{b2}{\nowdot}
\rput[l](2,0){\scr $a \otimes (b \otimes c)$}}
}

\rput(21,8){\rnode{cc}{$a \otimes (b \otimes c)$}}

}

\psset{border=1pt}

\nccurve[angleA=10,angleB=180]{b1}{b2}


}

\ncline[nodesep=10pt,offset=0pt]{->}{aa}{bb}\naput{\scr $1_{a \otimes (b\otimes c)}$}
\ncline[nodesep=10pt,offset=0pt]{->}{bb}{cc}\naput{\scr $1_{a \otimes (b\otimes c)}$}

\eps\]

\noi The top and bottom (horizontal) arrows are each represented by the associator 
\[(a \otimes b) \otimes c \mtra a \otimes (b \otimes c)\]
and so the diagram does indeed commute.  The unit diagrams follow similarly. 
\end{proof}

\begin{proposition}
The monoidal functor $B \tmap{W} U\Sigma B$ is braided.
\end{proposition}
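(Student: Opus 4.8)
Beyond the monoidality just established, the only thing left to check is the single braiding-compatibility axiom. Write $\beta$ for the braiding of $U\Sigma B$ produced by the Eckmann--Hilton argument of Section~\ref{one}, $\sigma$ for the braiding of $B$, and $\phi$ for the monoidal constraint of $W$ constructed in the previous proof. We must show that for all $a,b\in B$ the square
\[
\begin{array}{ccc}
Wa \otimes Wb & \xrightarrow{\ \beta_{Wa,Wb}\ } & Wb \otimes Wa \\
\downarrow\ {\scriptstyle\phi_{ab}} & & \downarrow\ {\scriptstyle\phi_{ba}} \\
W(a\otimes b) & \xrightarrow{\ W\sigma_{ab}\ } & W(b\otimes a)
\end{array}
\]
commutes in $U\Sigma B$. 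Each edge is a morphism of $\Sigma B$, i.e.\ a clique map in $\wt{B}$, so it is enough to choose one representing object for each of the four corner cliques, read off the representing morphism of $B$ along each edge, and verify that the resulting square commutes in $B$ after inserting the connecting isomorphisms of the four cliques.

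The plan is to take, at every corner, the representative carrying the \emph{trivial} linearising braid: $Wa\otimes Wb = \tfrac{\ol{a}}{\ol{b}}$ and $Wb\otimes Wa = \tfrac{\ol{b}}{\ol{a}}$ represented by $a\otimes b$ and $b\otimes a$, and the singleton cliques $W(a\otimes b)=\ol{a\otimes b}$, $W(b\otimes a)=\ol{b\otimes a}$ represented by $a\otimes b$ and $b\otimes a$. With these choices $\phi_{ab}$ and $\phi_{ba}$ are represented by $1_{a\otimes b}$ and $1_{b\otimes a}$ exactly as in the previous proof, and $W\sigma_{ab}$ is represented by $\sigma_{ab}\colon a\otimes b\to b\otimes a$. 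The real work is to identify the representative of $\beta_{Wa,Wb}$ on these same objects, and for this I would unwind the Eckmann--Hilton composite fixed in Section~\ref{one} applied to the singleton boxes $\ol{a}$ and $\ol{b}$: $\tfrac{\ol{a}}{\ol{b}}=\tfrac{1}{\ol{b}}\,\big|\,\tfrac{\ol{a}}{1}$ by strict horizontal units followed by strict interchange, then the two weak vertical unit isomorphisms (which, by the definition of the vertical tensor in Section~\ref{sigmatensor}, are represented by the unit isomorphisms of $B$ equipped with the linearising braids prescribed there), then $\tfrac{\ol{b}}{1}\,\big|\,\tfrac{1}{\ol{a}}=\tfrac{\ol{b}}{\ol{a}}$. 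The two occurrences of interchange fall into the two special cases recorded in the note at the end of Section~\ref{sigmainterchange}: one contributes an honest crossing of the $b$-strand past the $a$-strand and the other is trivial, so reading off the resulting clique map on the trivial-braid representatives yields precisely the braiding $\sigma_{ab}$ of $B$.

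Granting this identification, both paths around the square of representatives are the single morphism $\sigma_{ab}\colon a\otimes b\to b\otimes a$ in $B$; since all four corners were given by the same representatives along adjacent edges and the chosen $B$-morphisms agree on the nose, there is no residual connecting-isomorphism contribution to reconcile, so the clique-map square commutes and $W$ is braided. The main obstacle is the bookkeeping in the middle step: tracking the linearising braids introduced by the horizontal tensors appearing in the Eckmann--Hilton composite, invoking the Section~\ref{sigmainterchange} simplification at exactly the two right places, and confirming that the orientation convention for the Eckmann--Hilton argument fixed in Section~\ref{one} is compatible with the orientation for the braided structure on $\Pi_1\big(C(I^2,\cO)\big)$ fixed in Section~\ref{vertmonconfigs}, so that the answer comes out as $\sigma_{ab}$ itself rather than as $\sigma_{ba}^{-1}$ or as $\sigma_{ab}$ conjugated by a non-trivial coherence cell.
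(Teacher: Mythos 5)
Your proposal is correct and follows essentially the same route as the paper: choose the natural representatives at all four corners so that $\phi_{ab}$, $\phi_{ba}$ are represented by identities and $W\sigma_{ab}$ by $\sigma_{ab}$, then unwind the Eckmann--Hilton composite for the braiding of $U\Sigma B$, using the two special cases of interchange from Section~\ref{sigmainterchange} (one contributing the crossing $\sigma$, the other trivial) to see that it too is represented by $\sigma_{ab}$. The identification you defer with ``granting this identification'' is exactly the explicit diagram chase the paper carries out, and your reading of which interchange step supplies the braiding matches the paper's.
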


\begin{proof}

To show that the monoidal functor $W$ is braided we need to show that the following diagram commutes (where we are writing $\sigma$ for the braiding in $B$ and also for the one in $U \Sigma B$).

\[
\psset{unit=0.1cm,labelsep=3pt,nodesep=3pt}
\pspicture(0,-4)(30,24)



\rput[B](0,20){\Rnode{a1}{$Wa \otimes Wb$}}  
\rput[B](30,20){\Rnode{a2}{$W(a \otimes b)$}}  
\rput[B](0,0){\Rnode{a3}{$Wb \otimes Wa$}}  
\rput[B](30,0){\Rnode{a4}{$W(b \otimes a)$}}  

\ncline{->}{a1}{a2} \naput{{\scriptsize $\phi_{ab}$}} 
\ncline{->}{a3}{a4} \nbput{{\scriptsize $\phi_{ba}$}} 
\ncline{->}{a1}{a3} \nbput{{\scriptsize $\sigma$}} 
\ncline{->}{a2}{a4} \naput{{\scriptsize $W(\sigma)$}} 

\endpspicture
\]

We need to examine what the braiding $\sigma$ in $U\Sigma B$ is.  It comes from the weak \eh\ argument, which is the following combination of unit constraints and interchange:

\[\psset{unit=0.8mm}
\pspicture(-8,-8)(8,148)


\rput(0,140){\rnode{a1}{
\psframe(-8,-8)(8,8)

\rput(0,4){\pscircle*(0,0){0.5}
\rput(3,0){\scr $a$}
}

\rput(0,-4){\pscircle*(0,0){0.5}
\rput(3,0){\scr $b$}}

\psline[linestyle=dashed, linecolor=gray!70!white](-8,0)(8,0)
}}


\rput(0,115){\rnode{a2}{\psframe(-8,-8)(8,8)

\rput(4,4){\pscircle*(0,0){0.5}
\rput(2,0){\scr $a$}
}

\rput(-4,-4){\pscircle*(0,0){0.5}
\rput(2,0){\scr $b$}}

\psline[linestyle=dashed, linecolor=gray!70!white](-8,0)(8,0)
}}


\rput(0,90){\rnode{a3}{\psframe(-8,-8)(8,8)

\rput(4,4){\pscircle*(0,0){0.5}
\rput(2,0){\scr $a$}
}

\rput(-4,-4){\pscircle*(0,0){0.5}
\rput(2,0){\scr $b$}}

\psline[linestyle=dashed, linecolor=gray!70!white](0,-8)(0,8)}}


\rput(0,55){\rnode{a4}{\psframe(-8,-8)(8,8)
\rput(4,-4){\pscircle*(0,0){0.5}
\rput(2,0){\scr $a$}
}
\rput(-4,4){\pscircle*(0,0){0.5}
\rput(2,0){\scr $b$}}
%
\psline[linestyle=dashed, linecolor=gray!70!white](0,-8)(0,8)
}}


\rput(0,30){\rnode{a5}{\psframe(-8,-8)(8,8)
\rput(4,-4){\pscircle*(0,0){0.5}
\rput(2,0){\scr $a$}
}
\rput(-4,4){\pscircle*(0,0){0.5}
\rput(2,0){\scr $b$}}
\psline[linestyle=dashed, linecolor=gray!70!white](-8,0)(8,0)
%
}}


\rput(0,5){\rnode{a6}{\psframe(-8,-8)(8,8)
\rput(0,-4){\pscircle*(0,0){0.5}
\rput(2,0){\scr $a$}
}
\rput(0,4){\pscircle*(0,0){0.5}
\rput(2,0){\scr $b$}}
\psline[linestyle=dashed, linecolor=gray!70!white](-8,0)(8,0)
%
}}

\psset{nodesep=32pt, labelsep=4pt}

\ncline[doubleline=true,arrows=-]{a1}{a2} \nbput{\scr\sf strict horizontal units}
\ncline[doubleline=true,arrows=-]{a2}{a3} \nbput{\scr\sf strict interchange}

\ncline[arrows=->,nodesep=28pt]{a3}{a4} \naput{\scr\sf weak vertical units}
\ncline[doubleline=true,arrows=-]{a4}{a5} \nbput{\scr\sf  strict interchange}
\ncline[doubleline=true,arrows=-]{a5}{a6} \nbput{\scr\sf strict horizontal units}

\endpspicture\]

\noi By definition, the morphisms of $U\Sigma B$ are those of $\Sigma B$, so these morphisms are all clique maps in $\wt{B}$.  

To build up this composite we begin with the unit constraints in $\Sigma B$. The horizontal unit constraints are identities; the vertical unit constraints are not identities but we can choose the representatives to be identities, as shown below.

\[\psset{unit=0.8mm}
\ps(40,42)

\rput(0,25){

\psframe(0,0)(16,16)

\rput(8,12){\rnode{a1}{\nowdot}
\rput(-2,0){\scr $a$}
}

\rput(8,4){\rnode{b1}{}
\rput(-2,0){\scr $$}
}

\psline[linestyle=dashed, linecolor=gray](0,8)(16,8)

\rput(35,0){
\psline(0,0)(0,16)
\rput(0,16){\psline(-1,0)(1,0)}
\rput(0,0){\psline(-1,0)(1,0)}

\rput(25,8){\rnode{aa}{$a$}}

\rput(0,8){\rnode{a2}{\nowdot}
\rput[l](2,0){\scr $a$}}

\rput(0,4){\rnode{b2}{}
\rput[l](2,0){\scr $$}}
}

\psset{border=1pt}

\nccurve[angleA=10,angleB=180]{a1}{a2}


}

\rput(0,0){

\psframe(0,0)(16,16)

\rput(8,8){\rnode{b1}{\nowdot}
\rput[c](0,3){\scr $a$}
}

\rput(35,0){
\psline(0,0)(0,16)
\rput(0,16){\psline(-1,0)(1,0)}
\rput(0,0){\psline(-1,0)(1,0)}

\rput(0,8){\rnode{b2}{\nowdot}
\rput[l](2,0){\scr $a$}}

\rput(25,8){\rnode{bb}{$a$}}

}

\psset{border=1pt}

\nccurve[angleA=10,angleB=180]{b1}{b2}


}

\ncline[nodesep=10pt,offset=0pt]{->}{aa}{bb}\naput{\scr $1_{a}$}

\eps\]

For each step we need to take a vertical or horizontal tensor product of the constraint maps in question using the constructions defined in Section~\ref{sigmatensor}, or perform interchange on them.  


Note that interchange is strict, but our expression of it involves a change of representing object, so although it is the identity clique map it will be represented by the appropriate connecting isomorphism, as given in Section~\ref{sigmainterchange}.

%

Thus the braiding in $U \Sigma B$ is the clique map represented by the following composite:



\[\renewcommand{\nowdot}{\psset{unit=1.1mm}\pscircle*(0,0){0.3}}
\psset{unit=0.8mm}
\ps(130,170)

\rput(0,150){

\psframe(0,0)(16,16)
\pnode(8,8){x1}

\rput(8,12){\rnode{a1}{\nowdot}
\rput(-2,0){\scr $a$}
}

\rput(8,4){\rnode{b1}{\nowdot}
\rput(-2,0){\scr $b$}
}

\psline[linestyle=dashed, linecolor=gray](0,8)(16,8)

\rput(35,0){
\psline(0,0)(0,16)
\rput(0,16){\psline(-1,0)(1,0)}
\rput(0,0){\psline(-1,0)(1,0)}

\rput(25,8){\rnode{z1}{$a \otimes b$}}

\rput(0,12){\rnode{a2}{\nowdot}
\rput[l](2,0){\scr $a$}}

\rput(0,4){\rnode{b2}{\nowdot}
\rput[l](2,0){\scr $b$}}
}

\psset{border=1pt}

\nccurve[angleA=10,angleB=180]{a1}{a2}
\nccurve[angleA=10,angleB=180]{b1}{b2}


}

\rput(0,120){

\psframe(0,0)(16,16)
\pnode(8,8){x2}

\rput(12,12){\rnode{a1}{\nowdot}
\rput(-2,0){\scr $a$}
}

\rput(4,4){\rnode{b1}{\nowdot}
\rput(-2,0){\scr $b$}
}

\psline[linestyle=dashed, linecolor=gray](0,8)(16,8)

\rput(35,0){
\psline(0,0)(0,16)
\rput(0,16){\psline(-1,0)(1,0)}
\rput(0,0){\psline(-1,0)(1,0)}

\rput(25,8){\rnode{z2}{$a \otimes b$}}

\rput[l](45,-5){\parbox{15em}{This is the identity as a clique map, but with a change of representing objects, so we represent the clique map by the connecting isomorphism $\sigma$.}}

\rput(0,12){\rnode{a2}{\nowdot}
\rput[l](2,0){\scr $a$}}

\rput(0,4){\rnode{b2}{\nowdot}
\rput[l](2,0){\scr $b$}}
}

\psset{border=1pt}

\nccurve[angleA=10,angleB=180]{a1}{a2}
\nccurve[angleA=10,angleB=180]{b1}{b2}


}

\rput(0,90){
\psframe(0,0)(16,16)
\pnode(8,8){x3}

\rput(12,12){\rnode{a1}{\nowdot}
\rput(-2,0){\scr $a$}
}

\rput(4,4){\rnode{b1}{\nowdot}
\rput(-2,0){\scr $b$}
}

\psline[linestyle=dashed, linecolor=gray](8,0)(8,16)

\rput(35,0){
\psline(0,0)(0,16)
\rput(0,16){\psline(-1,0)(1,0)}
\rput(0,0){\psline(-1,0)(1,0)}

\rput(25,8){\rnode{z3}{$b \otimes a$}}

\rput[l](45,-5){\parbox{15em}{This is not the identity as the objects in $U \Sigma B$ are different, but the clique map is represented by the identity.}}

\rput(0,4){\rnode{a2}{\nowdot}
\rput[l](2,0){\scr $a$}}

\rput(0,12){\rnode{b2}{\nowdot}
\rput[l](2,0){\scr $b$}}
}

\psset{border=1pt}

\nccurve[angleA=10,angleB=180]{a1}{a2}
\nccurve[angleA=80,angleB=180,ncurvA=1.7]{b1}{b2}


}

\rput(0,60){

\psframe(0,0)(16,16)
\pnode(8,8){x4}

\rput(12,4){\rnode{a1}{\nowdot}
\rput(-2,0){\scr $a$}
}

\rput(4,12){\rnode{b1}{\nowdot}
\rput(-2,0){\scr $b$}
}

\psline[linestyle=dashed, linecolor=gray](8,0)(8,16)

\rput(35,0){
\psline(0,0)(0,16)
\rput(0,16){\psline(-1,0)(1,0)}
\rput(0,0){\psline(-1,0)(1,0)}

\rput(25,8){\rnode{z4}{$b \otimes a$}}

\rput[l](45,-8){\parbox{15em}{This interchange has identities in the positions that generally require a braiding in the connecting isomorphism, so can in fact be represented by an identity morphism.}}

\rput(0,4){\rnode{a2}{\nowdot}
\rput[l](2,0){\scr $a$}}

\rput(0,12){\rnode{b2}{\nowdot}
\rput[l](2,0){\scr $b$}}
}

\psset{border=1pt}

\nccurve[angleA=10,angleB=180]{a1}{a2}
\nccurve[angleA=10,angleB=180]{b1}{b2}


}

\rput(0,30){

\psframe(0,0)(16,16)
\pnode(8,8){x5}

\rput(12,4){\rnode{a1}{\nowdot}
\rput(-2,0){\scr $a$}
}

\rput(4,12){\rnode{b1}{\nowdot}
\rput(-2,0){\scr $b$}
}

\psline[linestyle=dashed, linecolor=gray](0,8)(16,8)

\rput(35,0){
\psline(0,0)(0,16)
\rput(0,16){\psline(-1,0)(1,0)}
\rput(0,0){\psline(-1,0)(1,0)}

\rput(25,8){\rnode{z5}{$b \otimes a$}}

\rput(0,4){\rnode{a2}{\nowdot}
\rput[l](2,0){\scr $a$}}

\rput(0,12){\rnode{b2}{\nowdot}
\rput[l](2,0){\scr $b$}}
}

\psset{border=1pt}

\nccurve[angleA=10,angleB=180]{a1}{a2}
\nccurve[angleA=10,angleB=180]{b1}{b2}


}

\rput(0,0){
\psframe(0,0)(16,16)
\pnode(8,8){x6}

\rput(8,4){\rnode{a1}{\nowdot}
\rput(-2,0){\scr $a$}
}

\rput(8,12){\rnode{b1}{\nowdot}
\rput(-2,0){\scr $b$}
}

\psline[linestyle=dashed, linecolor=gray](0,8)(16,8)

\rput(35,0){
\psline(0,0)(0,16)
\rput(0,16){\psline(-1,0)(1,0)}
\rput(0,0){\psline(-1,0)(1,0)}

\rput(25,8){\rnode{z6}{$b \otimes a$}}

\rput(0,4){\rnode{a2}{\nowdot}
\rput[l](2,0){\scr $a$}}

\rput(0,12){\rnode{b2}{\nowdot}
\rput[l](2,0){\scr $b$}}
}

\psset{border=1pt}

\nccurve[angleA=10,angleB=180]{a1}{a2}
\nccurve[angleA=10,angleB=180]{b1}{b2}


}

\ncline[nodesep=10pt,offset=0pt]{->}{z1}{z2}\naput{\scr $1_{a \otimes b}$}
\ncline[nodesep=10pt,offset=0pt]{->}{z2}{z3}\naput{\scr $\sigma$}
\ncline[nodesep=10pt,offset=0pt]{->}{z3}{z4}\naput{\scr $1_{b \otimes a}$}
\ncline[nodesep=10pt,offset=0pt]{->}{z4}{z5}\naput{\scr $1_{b \otimes a}$}
\ncline[nodesep=10pt,offset=0pt]{->}{z5}{z6}\naput{\scr $1_{b \otimes a}$}

\ncline[doubleline=true,nodesep=28pt]{x1}{x2}\nbput[labelsep=5pt]{\sf\scr strict horizontal units}
\ncline[doubleline=true,nodesep=28pt]{x2}{x3} \nbput[labelsep=5pt]{\sf\scr interchange}
\ncline[nodesep=26pt,arrows=->]{x3}{x4} \naput[npos=0.45]{\scr\rotatebox{90}{$\sim$}}\nbput[labelsep=5pt]{\sf\scr weak vertical units}
\ncline[doubleline=true,nodesep=28pt]{x4}{x5} \nbput[labelsep=5pt]{\sf\scr interchange}
\ncline[doubleline=true,nodesep=28pt]{x5}{x6}

\eps\]

\noi As this composite is just $\sigma$, we see that both paths round the square in question are $\sigma$, so the diagram commutes.  So $B \tmap{W} U \Sigma B$ is a braided monoidal equivalence as claimed.

\end{proof}

We have proved the main theorem.

\begin{theorem}[Main Theorem]\label{theoremmain}
Given any braided monoidal category $B$ there is a \ddbicatscat\ $\Sigma B$ whose underlying braided monoidal category $U\Sigma B$ is braided monoidal equivalent to $B$.
\end{theorem}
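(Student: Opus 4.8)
The plan is simply to assemble the pieces already established. By the theorem at the end of Section~\ref{mainconstruction}, $\Sigma B$ is a \ddbicatscat, so by Proposition~\ref{dd-char} it is a category carrying a weak (vertical) and a strict (horizontal) monoidal structure satisfying strict interchange; regarding it as a doubly-degenerate tricategory, the Eckmann--Hilton construction of Section~\ref{one} equips its underlying category with the structure of a braided monoidal category $U\Sigma B$, the monoidal structure being the weak vertical one and the braiding using the orientation fixed there. Thus the content of the theorem is precisely that $B \simeq U\Sigma B$ as braided monoidal categories, which is what the three Propositions of Section~\ref{maintheorem} jointly provide.

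First I would exhibit the comparison functor $W\colon B \to U\Sigma B$ sending an object $b$ to the slide clique $\ol{b}$ of a single point labelled $b$, and a morphism $f$ to the clique map it represents. To see $W$ is full and faithful, compute $\theta\ol{b}$: since every configuration in $\ol b$ is a singleton, every linearising braid is a single strand, so $\theta\ol b$ has only trivial mediating braids and is represented by $b \in B$ (its other objects being copies of $I_B$, with connecting maps built from unit constraints). Hence $\Sigma B(\ol a,\ol b)=\wt B(\theta\ol a,\theta\ol b)\cong B(a,b)$, and one checks that $W$ acts as the identity under this isomorphism. Essential surjectivity is immediate: given a slide clique $X$, choosing any parenthesisation $b=\langle X\rangle$ of its labels, the clique map represented by $1_b$ gives an isomorphism $X\cong Wb$ (represented by an identity of $B$, but not itself an identity clique map).

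Next I would give $W$ its monoidal and braided structure. Take the constraint $\phi_{ab}\colon Wa\otimes Wb\to W(a\otimes b)$ and the unit constraint to be the clique maps represented by $1_{a\otimes b}$ and $1_{I_B}$ respectively; these are isomorphisms in $\wt B$. The monoidal-functor coherence diagrams then reduce, after passing to representatives, to instances of the coherence of $B$, since every arrow appearing is represented by an associator or unitor of $B$. For braidedness, the only real computation is to identify the braiding $\sigma$ of $U\Sigma B$: it is the composite of strict horizontal-unit, strict interchange and weak vertical-unit constraints of the weak Eckmann--Hilton argument, and --- using the description of interchange in Section~\ref{sigmainterchange}, where a strict interchange is nevertheless represented by a connecting isomorphism --- this composite clique map is represented by precisely the braiding $\sigma$ of $B$. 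Hence both legs of the square relating $\sigma$, $W(\sigma)$ and the constraints $\phi$ are represented by $\sigma$, so the square commutes and $W$ is braided. Composing the three Propositions, $W$ is a braided monoidal equivalence, which is the theorem.

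The genuine difficulty has been front-loaded into the construction of $\Sigma B$ itself --- arranging that interchange is \emph{strict} while the hom-cliques still record the braiding of $B$ --- rather than into the proof of the theorem. Within the proof, the one step that needs care is the bookkeeping of which clique maps are ``identities represented by non-identity morphisms of $B$'' versus ``non-identities represented by identities of $B$'', and in particular verifying that the Eckmann--Hilton composite computing the braiding of $U\Sigma B$ collapses to a single application of $\sigma$ in $B$; everything else is a routine unwinding of the clique calculus set up in Sections~\ref{cliques}--\ref{mainconstruction}.
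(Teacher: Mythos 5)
Your proposal is correct and follows essentially the same route as the paper: the same functor $W$ sending $b$ to the singleton slide clique, the same computation of $\theta\ol{b}$ for full faithfulness, the same choice of a parenthesisation $\langle X\rangle$ for essential surjectivity, the same identity-represented monoidal constraints, and the same unwinding of the Eckmann--Hilton composite to show the braiding of $U\Sigma B$ is represented by $\sigma$. Nothing essential is missing.
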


\begin{remark}
It is worth noting why this proof does not work for structures with a strict vertical tensor product as well as a strict horizontal one.  Such structures would be doubly-degenerate strict 3-categories, so we know that not all braided monoidal categories arise in this way, thus an analogous proof of the main theorem should fail for such structures.  The issue here would be in the $\Sigma$ construction.  We defined the objects of $\Sigma B$ to be the slide cliques of configurations of points labelled by the objects of $B$; the horizontal slide cliques ensure that horizontal composition is strict. If we try to use horizontal and vertical slide maps at the same time, to make vertical composition also strict, we will not get a clique as there will not be uniquely specified isomorphisms between configurations that are now considered equivalent.  Thus the $\Sigma$ construction cannot be made in the first place.   
\end{remark}




\section{Future work}
\label{future}

Evidently this result is only an object-level result and not a result on totalities, but we judge this to be a worthwhile beginning, as in \cite{jk1}.  In future work we assemble doubly-degenerate \bicats-categories into a bicategory and show that $U$ and $\Sigma$ as defined in the present work extend to a biequivalence of this bicategory with the bicategory of braided monoidal categories, braided monoidal functors, and braided monoidal transformations.  The result in this paper provides the biessential surjectivity.  The subtlety required for the totality is a notion of weak functor between doubly-degenerate \bicats-categories.

In future work we will also prove the analogous results for doubly-degenerate Trimble 3-categories.  The ideas are essentially the same, except that we don't need the concept of slide cliques as we don't need to make the structures strict horizontally; rather, we need a way to extract a plain monoidal category structure from one parametrised by an operad, and that is where the cliques come in.



\begin{thebibliography}{10}

\bibitem{am1}
M.~Aguiar and S.~Mahajan.
\newblock Monoidal functors, species and hopf algebras.
\newblock {\em CRM Monoidal Series}, 29, 2010.

\bibitem{bak1}
Igor Bakovi{\'c}.
\newblock Fibrations of bicategories, 2004.
\newblock E-print http://www.irb.hr/korisnici/ibakovic/groth2fib.pdf.

\bibitem{cg3}
Eugenia Cheng and Nick Gurski.
\newblock The periodic table of $n$-categories for low dimensions {I}{I}:
  degenerate tricategories.
\newblock {\em Cahiers de Topologie et G{\'e}om{\'e}trie Diff{\'e}rentielle
  Cat{\'e}goriques}, 52:82--125, 2011.
\newblock E-print 0706.2307.

\bibitem{gps1}
R.~Gordon, A.~J. Power, and R.~Street.
\newblock Coherence for tricategories.
\newblock {\em Memoirs of the American Mathematical Society}, 117(558), 1995.

\bibitem{jk1}
Andr{\'e} Joyal and Joachim Kock.
\newblock Weak units and homotopy 3-types.
\newblock In Batanin, Davydov, Johnson, Lack, and Neeman, editors, {\em
  Categories in Algebra, Geometry and Mathematical Physics, proceedings of
  Streetfest}, volume 431 of {\em Contemporary Math.}, pages 257--276. AMS,
  2007.

\bibitem{js1}
Andr{\'e} Joyal and Ross Street.
\newblock Braided tensor categories.
\newblock {\em Advances in Mathematics}, 102:20--78, 1983.

\bibitem{js2}
Andr{\'e} Joyal and Ross Street.
\newblock The geometry of tensor calculus, {I}.
\newblock {\em Advances in Mathematics}, 88:55--112, 1991.

\bibitem{koc3}
Joachim Kock.
\newblock Note on commutativity in double semigroups and two-fold monoidal
  categories.
\newblock {\em Journal of Homotopy and Related Structures}, 2, 2006.
\newblock E-print math/0608452.

\bibitem{lac5}
Stephen Lack.
\newblock A {Q}uillen model structure for 2-categories.
\newblock {\em $K$-Theory}, 26:171--205, 2002.

\bibitem{lac6}
Stephen Lack.
\newblock A {Q}uillen model structure for bicategories.
\newblock {\em $K$-Theory}, 33:185--197, 2004.

\bibitem{mp1}
Saunders {Mac Lane} and Robert Par{\'e}.
\newblock Coherence for bicategories and indexed categories.
\newblock {\em Journal of Pure and Applied Algebra}, 37:59--80, 1985.

\bibitem{mak3}
M.~Makkai.
\newblock Avoiding the axiom of choice in general category theory.
\newblock {\em Journal of Pure and Applied Algebra}, 108:109--173, 1996.

\bibitem{sim3}
Carlos Simpson.
\newblock Homotopy types of strict 3-groupoids, 1998.
\newblock E-print math.CT/9810059.

\bibitem{tri1}
Todd Trimble.
\newblock What are `fundamental $n$-groupoids'?, 1999.
\newblock seminar at DPMMS, Cambridge, 24 August 1999.

\end{thebibliography}
\end{document}